\theoremstyle{plain}
\newtheorem{theorem}{Theorem}[section]
\newtheorem{corollary}[theorem]{Corollary}
\newtheorem{lemma}[theorem]{Lemma}
\numberwithin{equation}{section}
\begin{document}

\title[Hankel Determinants of Bernoulli and Euler Polynomials]{Hankel Determinants of sequences related to Bernoulli and Euler Polynomials}

\author{Karl Dilcher}
\address{Department of Mathematics and Statistics\\
         Dalhousie University\\
         Halifax, Nova Scotia, B3H 4R2, Canada}
\email{dilcher@mathstat.dal.ca}

\author[Lin Jiu]{Lin Jiu\textsuperscript{*}}
\thanks{*corresponding author}
\address{Department of Mathematics and Statistics\\
         Dalhousie University\\
         Halifax, Nova Scotia, B3H 4R2, Canada}
\email{lin.jiu@dal.ca}

\keywords{Bernoulli polynomial, Euler polynomial, Hankel determinant, orthogonal
polynomial}
\subjclass[2020]{Primary 11B68; Secondary 11C20}
\thanks{Research supported in part by the Natural Sciences and Engineering
        Research Council of Canada, Grant \# 145628481}

\date{}

\setcounter{equation}{0}

\begin{abstract}
We evaluate the Hankel determinants of various sequences related to Bernoulli
and Euler numbers and special values of the corresponding polynomials. Some of
these results arise as special cases of Hankel determinants of certain sums 
and differences of Bernoulli and Euler polynomials, while others are 
consequences of a method that uses the derivatives of Bernoulli and Euler
polynomials. We also obtain Hankel determinants for sequences of sums and differences 
of powers and for generalized Bernoulli polynomials belonging to certain Dirichlet
characters with small conductors. Finally, we collect and organize Hankel
determinant identities for numerous sequences, both new and known, containing
Bernoulli and Euler numbers and polynomials.
\end{abstract}

\maketitle

\section{Introduction}

The Bernoulli numbers $B_n$ and polynomials $B_n(x)$ are usually defined by 
the generating functions
\begin{equation}\label{1.1}
\frac{t}{e^t-1} = \sum_{n=0}^\infty B_n\frac{t^n}{n!}\qquad\hbox{and}\qquad
\frac{te^{xt}}{e^t-1} = \sum_{n=0}^\infty B_n(x)\frac{t^n}{n!},
\end{equation}
and the related  Euler numbers $E_n$ and polynomials $E_n(x)$ can be defined by 
\begin{equation}\label{1.2}
\frac{2}{e^t+e^{-t}} = \sum_{n=0}^\infty E_n\frac{t^n}{n!}\qquad\hbox{and}\qquad
\frac{2e^{xt}}{e^t+1} = \sum_{n=0}^\infty E_n(x)\frac{t^n}{n!}.
\end{equation}
These four sequences are 
among the most important special number and polynomial sequences in mathematics,
with numerous applications in number theory, combinatorics, numerical analysis,
and other areas. The first few elements of these sequences are listed in
Table~1, and their most important properties can be found, e.g., in 
\cite[Ch.~24]{DLMF}.

This paper will be concerned with Hankel determinants of sequences related to
Bernoulli and Euler numbers and special values of the corresponding polynomials.
A {\it Hankel matrix} or {\it persymmetric matrix} is a symmetric matrix which 
has constant entries along its antidiagonals; in other words, it is of the form
\begin{equation}\label{1.3}
\big(c_{i+j}\big)_{0\leq i,j\leq n}
=\begin{pmatrix}
c_{0} & c_{1} & c_{2} & \cdots & c_{n}\\
c_{1} & c_{2} & c_{3} & \cdots & c_{n+1}\\
c_{2} & c_{3} & c_{4} & \cdots & c_{n+2}\\
\vdots & \vdots & \vdots & \ddots & \vdots\\
c_{n} & c_{n+1} & c_{n+2} & \cdots & c_{2n}
\end{pmatrix}.
\end{equation}
A {\it Hankel determinant} is then the determinant of a Hankel matrix. 
Furthermore, given a sequence ${\bf c}=(c_0,c_1,\ldots)$ of numbers or 
polynomials, we define the $n$th Hankel determinant of ${\bf c}$ to be 
\begin{equation}\label{1.3a}
H_n({\bf c}) = H_n(c_k) = \det_{0\leq i,j\leq n}\big(c_{i+j}\big).
\end{equation}
If we use the second notation, $H_n(c_k)$, it is always assumed that the 
sequence begins with $k=0$; it should be noted that the value of the Hankel
determinant depends on this in an essential way. We shall return to this
issue in Section~6.

The Hankel determinants of a sequence are closely related to classical 
orthogonal polynomials; see, e.g., \cite[Ch.~2]{Is}. In fact, many evaluations
of Hankel determinants come from this connection and a related connection with
continued fractions. All this has been well-studied; see, e.g., the very
extensive treatments in \cite{Kr, Kr2, Mi}, and the numerous references
provided there.

Using these connections with orthogonal polynomials and continued fractions,
the current authors recently derived some apparently novel evaluations of 
Hankel determinants of certain subsequences of Bernoulli and Euler polynomials
\cite{DJ}. It is the purpose of this paper to use the results in \cite{DJ},
combined with some other results, to obtain new evaluations of Hankel
determinants of various sequences related to Bernoulli and Euler numbers and
polynomials.

To put this in perspective and provide an introductory example, we quote the
following well-known result for the Euler numbers $E_n$, due to Al-Salam and 
Carlitz \cite[Eq.~(4.2)]{AC}, namely
\begin{equation}\label{1.4}
H_n(E_k) = (-1)^{\binom{n+1}{2}}\prod_{\ell=1}^n\ell!^2\qquad
(n\geq 0).
\end{equation}
The closely related sequence $(kE_{k-1})$ turns out to be quite different.
As a corollary of one of the main results in this paper we obtain
\begin{equation}\label{1.5}
H_{2m+1}\big(kE_{k-1}\big) 
= (-1)^{m+1}2^{4m(m+1)}\prod_{\ell=1}^m\ell!^{8}\qquad(m\geq 0),
\end{equation}
with $H_{2m}(kE_{k-1})=0$.

This paper is structured as follows. In Section~2 we quote some 
identities and known results that will be used later in the paper. Section~3
deals with Hankel determinants of sums and differences of Bernoulli polynomials
with the same index, along with various consequences, and in Section~4 we 
derive analogous results for Euler polynomials. In Section~5 we introduce a
method based on the derivatives of Bernoulli and Euler polynomials and obtain
several more Hankel determinant evaluations as corollaries. We also recall some
necessary facts on orthogonal polynomials in Section~5, and apply this 
derivative method to a shifted sequence in Section~6. Finally, in Section~7,
we collect and organize Hankel determinant identities for numerous sequences
containing Bernoulli and Euler numbers and polynomials.

\section{Some known results}

We begin this section with a few general properties of Bernoulli and 
Euler polynomials and of Hankel determinants that will 
be required in later sections.
We begin with two identities that connect Bernoulli and Euler numbers with their
polynomial analogues:
\begin{equation}\label{2.1}
B_n(x) = \sum_{j=0}^n\binom{n}{j}B_j x^{n-j},\qquad
E_n(x) = \sum_{j=0}^n\binom{n}{j}
\frac{E_j}{2^j}\big(x-\tfrac{1}{2}\big)^{n-j}.
\end{equation}
These identities follow easily from \eqref{1.1}, resp.\ \eqref{1.2}. The 
Bernoulli and Euler polynomials are also connected to each other through
\begin{equation}\label{2.2}
E_{n-1}(x) = \frac{2^n}{n}\left(B_n(\tfrac{x+1}{2})-B_n(\tfrac{x}{2})\right)
\end{equation}
(see, e.g., \cite[Eq.~24.4.23]{DLMF}), with the related identities
\begin{equation}\label{2.3}
(2n+1)E_{2n} = 2^{4n+2}B_{2n+1}(\tfrac{3}{4}),\quad
(n+1)E_n(1) = 2(2^{n+1}-1)B_{n+1}\quad(n\geq 1),
\end{equation}
which follow easily from \cite[Eq.~24.4.31, 24.4.26]{DLMF}.
Other important properties are the pair of reflection formulas
\begin{equation}\label{2.4}
B_{n}(1-x)=(-1)^{n}B_{n}(x),\qquad E_{n}(1-x)=(-1)^{n}E_{n}(x);
\end{equation}
see, e.g., \cite[Eq.~24.4.3, 24.4.4]{DLMF}, and the zeros
\begin{equation}\label{2.4a}
B_{2k+1}(\tfrac{1}{2})=B_{2k+3}(0)=B_{2k+3}(1)=0,\qquad k=0,1,2,\ldots
\end{equation}
and similarly
\begin{equation}\label{2.4b}
E_{2k+1}(\tfrac{1}{2})=E_{2k+2}(0)=E_{2k+2}(1)=0,\qquad k=0,1,2,\ldots
\end{equation}
Most of these follow from the identities above; see also 
\cite[Sect,~24.4(vi)]{DLMF}.

\bigskip
\begin{center}
\renewcommand*{\arraystretch}{1.2}
\begin{tabular}{|r||r|r|l|l|}
\hline
$n$ & $B_n$ & $E_n$ & $B_n(x)$ & $E_n(x)$\\
\hline
0 & 1 & 1 &  1 & 1 \\
1 & $-1/2$ & 0 & $x-\tfrac{1}{2}$ & $x-\tfrac{1}{2}$ \\
2 & $1/6$ & $-1$  & $x^2-x+\tfrac{1}{6}$ & $x^2-x$ \\
3 & 0 & 0 & $x^3-\tfrac{3}{2}x^2+\tfrac{1}{2}x$ & $x^3-\tfrac{3}{2}x^2+\tfrac{1}{4}$ \\
4 & $-1/30$ & 5 & $x^4-2x^3+x^2-\tfrac{1}{30}$ & $x^4-2x^3+x$ \\
5 & 0 & 0 & $x^5-\tfrac{5}{2}x^4+\tfrac{5}{3}x^3-\tfrac{1}{6}x$ &
$x^5-\tfrac{5}{2}x^4+\tfrac{5}{2}x^2-\tfrac{1}{2}$ \\
6 & $1/42$ & $-61$ & $x^6-3x^5+\tfrac{5}{2}x^4-\tfrac{1}{2}x^2+\tfrac{1}{42}$ &
$x^6-3x^5+5x^3-3x$ \\
\hline
\end{tabular}

\medskip
{\bf Table~1}: $B_n, E_n, B_n(x)$ and $E_n(x)$ for $0\leq n\leq 6$.
\end{center}
\bigskip

Next we state some useful properties of Hankel determinants.

\begin{lemma}\label{lem:2.1}
Let $(c_0, c_1,\ldots)$ be a sequence and $x$ a variable or a complex number. 
Then for all $n\geq 0$ we have
\begin{equation}\label{2.5}
H_n(x^kc_k) = x^{n(n+1)}H_n(c_k),
\end{equation}
and if
\[
c_k(x) = \sum_{j=0}^k\binom{k}{j}c_jx^{k-j},
\]
then
\begin{equation}\label{2.6}
H_n(c_k(x)) = H_n(c_k).
\end{equation}
\end{lemma}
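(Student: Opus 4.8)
The plan is to treat the two identities separately, since they rely on quite different mechanisms.

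For \eqref{2.5} I would argue by direct row and column scaling. The $(i,j)$-entry of the Hankel matrix of $(x^kc_k)$ is $x^{i+j}c_{i+j} = x^i\cdot x^j\cdot c_{i+j}$, so the plan is to pull the factor $x^i$ out of the $i$th row and the factor $x^j$ out of the $j$th column, for $i,j=0,1,\ldots,n$. By multilinearity of the determinant this leaves $H_n(c_k)$ multiplied by $x^{\sum_{i=0}^n i}\cdot x^{\sum_{j=0}^n j} = x^{n(n+1)}$, which is the claimed identity. This step is entirely routine.

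For \eqref{2.6} the key is to recognize the binomial transform as a conjugation by a Pascal-type matrix. I would introduce the $(n+1)\times(n+1)$ lower-triangular matrix $P=P(x)$ with entries $P_{i,j}=\binom{i}{j}x^{i-j}$ (with the convention $\binom{i}{j}=0$ for $j>i$), and write $C=(c_{i+j})$ for the Hankel matrix of the original sequence. The heart of the argument is to establish the factorization
\[
\big(c_{i+j}(x)\big)_{0\le i,j\le n} = P\,C\,P^{T}.
\]
Computing the $(i,j)$-entry of $PCP^{T}$ gives $\sum_{k,\ell}\binom{i}{k}\binom{j}{\ell}x^{i+j-k-\ell}c_{k+\ell}$; grouping the terms according to $m=k+\ell$ and invoking the Vandermonde convolution $\sum_{k+\ell=m}\binom{i}{k}\binom{j}{\ell}=\binom{i+j}{m}$ collapses this to $\sum_{m}\binom{i+j}{m}x^{i+j-m}c_m = c_{i+j}(x)$, as required.

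Once the factorization is in hand the conclusion is immediate: $P$ is unipotent lower triangular, so $\det P=\det P^{T}=1$, and multiplicativity of the determinant yields $H_n(c_k(x))=\det(PCP^{T})=\det C=H_n(c_k)$. The only genuine obstacle is correctly guessing the conjugating matrix $P$ and verifying the factorization via Vandermonde's identity; everything else follows from standard properties of determinants.
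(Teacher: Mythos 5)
Your proof is correct. For \eqref{2.5} you use exactly the argument the paper indicates: factoring $x^i$ from row $i$ and $x^j$ from column $j$, giving the total power $x^{2(0+1+\cdots+n)}=x^{n(n+1)}$. For \eqref{2.6} the paper gives no proof at all, deferring to Junod \cite{Ju} and Krattenthaler \cite[Lemma~15]{Kr}; you supply the standard argument those sources use, namely the congruence $\big(c_{i+j}(x)\big)=PCP^{T}$ with the unipotent Pascal-type matrix $P_{i,j}=\binom{i}{j}x^{i-j}$, verified via the Vandermonde convolution, so that $\det$ is preserved. Your write-up is therefore slightly more self-contained than the paper's, but it is not a different method --- it is the canonical proof of the binomial-transform invariance that the paper outsources to the literature.
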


The identity \eqref{2.5} is easy to derive by factoring suitable powers of $x$
from the rows and columns of the Hankel determinant. \eqref{2.6} can be found,
with proof, in \cite{Ju}; it is also mentioned and used in various other 
publications, for instance in \cite[Lemma~15]{Kr}. Applying Lemma~\ref{lem:2.1}
to both identities in \eqref{2.1}, we obtain
\begin{align}
H_n(B_k(x)) &= H_n(B_k),\label{2.7}\\
H_n(E_k(x)) &= 2^{-n(n+1)}H_n(E_k).\label{2.8}
\end{align}
The Hankel determinant on right-hand side of \eqref{2.8} was already mentioned 
in \eqref{1.4}, while the right-hand side of \eqref{2.7} will be recalled in the
final section.

The next lemma is about determinants of ``checkerboard matrices", namely 
matrices in which every other entry vanishes. This result can be found in 
\cite{CK} as Lemmas~5 and~6, and covers more general matrices than just Hankel
matrices.

\begin{lemma}\label{lem:2.2}
Let $M=\left(M_{i,j}\right)_{0\leq i,j\leq n-1}$ be a matrix.
If $M_{i,j}=0$ whenever $i+j$ is odd, then
\begin{equation}\label{2.9}
\det_{0\leq i.j\leq n-1}(M_{i,j})
=\det_{0\leq i.j\leq\lfloor(n-1)/2\rfloor}(M_{2i,2j})
\cdot\det_{0\leq i.j\leq\lfloor(n-2)/2\rfloor}(M_{2i+1,2j+1}).
\end{equation}
If $M_{i,j}=0$ whenever $i+j$ is even, then for even $n$ we have 
\begin{equation}\label{2.10}
\det_{0\leq i.j\leq n-1}(M_{i,j})=
(-1)^{n/2}\det_{0\leq i.j\leq\lfloor(n-1)/2\rfloor}(M_{2i+1,2j})
\cdot\det_{0\leq i.j\leq\lfloor(n-2)/2\rfloor}(M_{2i,2j+1}), 
\end{equation}
while for odd $n$ we have
\begin{equation}\label{2.11}
\det_{0\leq i.j\leq n-1}(M_{i,j}) = 0.
\end{equation}
\end{lemma}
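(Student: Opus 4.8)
The plan is to prove both cases at once by a single simultaneous permutation of rows and columns that separates the even-indexed coordinates from the odd-indexed ones. Let $\sigma$ be the permutation of $\{0,1,\dots,n-1\}$ that lists first the even indices $0,2,4,\dots$ in increasing order and then the odd indices $1,3,5,\dots$ in increasing order, and let $P$ be the corresponding permutation matrix. Applying $\sigma$ to the rows and to the columns simultaneously replaces $M$ by $PMP^{\mathsf T}$, which multiplies the determinant by $(\operatorname{sgn}\sigma)^2=1$; hence $\det M=\det(PMP^{\mathsf T})$ and it suffices to compute the determinant of the reordered matrix. Writing $n_e=\lceil n/2\rceil$ and $n_o=\lfloor n/2\rfloor$ for the numbers of even and odd indices in $\{0,\dots,n-1\}$, the reordered matrix decomposes into four blocks according to the parities of the original row and column indices: the even-even block $A=(M_{2i,2j})$ of size $n_e$, the odd-odd block $D=(M_{2i+1,2j+1})$ of size $n_o$, and the two rectangular off-diagonal blocks $B=(M_{2i,2j+1})$ (which is $n_e\times n_o$) and $C=(M_{2i+1,2j})$ (which is $n_o\times n_e$). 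A quick check of the floor expressions confirms that the index ranges $0\le i,j\le\lfloor(n-1)/2\rfloor$ and $0\le i,j\le\lfloor(n-2)/2\rfloor$ appearing in the statement reproduce exactly these even- and odd-index subsets, so the stated submatrices are precisely the blocks $A,B,C,D$.

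In the first case $M_{i,j}=0$ whenever $i+j$ is odd, which kills precisely the off-diagonal blocks $B$ and $C$, since each of their entries carries one even and one odd index. Hence $PMP^{\mathsf T}=\operatorname{diag}(A,D)$ is block diagonal and $\det M=\det A\cdot\det D$, which is exactly \eqref{2.9}.

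In the second case $M_{i,j}=0$ whenever $i+j$ is even, so now the diagonal blocks $A$ and $D$ vanish and $PMP^{\mathsf T}=\begin{pmatrix}0&B\\ C&0\end{pmatrix}$. If $n$ is odd then $n_e=n_o+1$, so $B$ is not square: the $n_e$ rows of the top block have their support confined to the $n_o<n_e$ columns occupied by $B$, so these row vectors are linearly dependent and $\det M=0$, giving \eqref{2.11}. If $n$ is even, set $m=n/2$, so that $B$ and $C$ are both $m\times m$; factoring $\begin{pmatrix}0&B\\ C&0\end{pmatrix}=\begin{pmatrix}0&I\\ I&0\end{pmatrix}\begin{pmatrix}C&0\\ 0&B\end{pmatrix}$ and using $\det\begin{pmatrix}0&I\\ I&0\end{pmatrix}=(-1)^{m}$ yields $\det M=(-1)^{n/2}\det C\cdot\det B=(-1)^{n/2}\det(M_{2i+1,2j})\det(M_{2i,2j+1})$, which is \eqref{2.10}.

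The argument is essentially bookkeeping, so the only points demanding care are the sign computations: verifying that the simultaneous row/column permutation contributes $(\operatorname{sgn}\sigma)^2=1$, and that the block swap in the even subcase of the second case contributes exactly $(-1)^{n/2}$ (using $m^2\equiv m \pmod 2$). I expect the block-antidiagonal sign and the degenerate odd-$n$ case to be the main, though minor, obstacles; everything else follows directly from the block decomposition.
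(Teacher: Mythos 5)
Your argument is correct. Note that the paper does not actually prove this lemma; it simply quotes it from Cigler and Krattenthaler \cite{CK} (Lemmas~5 and~6), so your write-up supplies a proof where the paper offers only a citation. The proof you give is the standard one: conjugating by the parity-sorting permutation costs $(\operatorname{sgn}\sigma)^2=1$, the index-range bookkeeping ($\lceil n/2\rceil-1=\lfloor(n-1)/2\rfloor$ and $\lfloor n/2\rfloor-1=\lfloor(n-2)/2\rfloor$) checks out, the odd-$n$ rank argument for \eqref{2.11} is sound, and the factorization $\left(\begin{smallmatrix}0&B\\ C&0\end{smallmatrix}\right)=\left(\begin{smallmatrix}0&I\\ I&0\end{smallmatrix}\right)\left(\begin{smallmatrix}C&0\\ 0&B\end{smallmatrix}\right)$ with $\det\left(\begin{smallmatrix}0&I_m\\ I_m&0\end{smallmatrix}\right)=(-1)^m$ gives exactly the sign $(-1)^{n/2}$ in \eqref{2.10} --- the very sign the paper points out is misprinted in the original source, so your independent verification of it is a useful check.
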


We note that there is a small typographical error in Lemma~6 of \cite{CK}; 
\eqref{2.10} above shows the correct power of $(-1)$.
Lemma~\ref{lem:2.2} is best explained through some examples.

\medskip
\noindent
{\bf Example~1.} By \eqref{2.9} we have
\[
\det\begin{pmatrix}a & 0 & b & 0 & c\\
0 & {\bf d} & 0 & {\bf e} & 0\\
f & 0 & g & 0 & h\\
0 & {\bf i} & 0 & {\bf j} & 0\\
k & 0 & l & 0 & m
\end{pmatrix}=\det\begin{pmatrix}a & b & c\\
f & g & h\\
k & l & m
\end{pmatrix}\cdot\det\begin{pmatrix}{\bf d} & {\bf e}\\
{\bf i} & {\bf j}
\end{pmatrix},
\]
\[
\det\begin{pmatrix}a & 0 & b & 0\\
0 & {\bf d} & 0 & {\bf e}\\
f & 0 & g & 0\\
0 & {\bf i} & 0 & {\bf j}
\end{pmatrix}=\det\begin{pmatrix}a & b\\
f & g
\end{pmatrix}\cdot\det\begin{pmatrix}{\bf d} & {\bf e}\\
{\bf i} & {\bf j}
\end{pmatrix}.
\]

\bigskip
\noindent
{\bf Example~2.} By \eqref{2.10} and \eqref{2.11} we have
\[
\det\begin{pmatrix}0 & {\bf d} & 0 & {\bf e}\\
f & 0 & g & 0\\
0 & {\bf i} & 0 & {\bf j}\\
k & 0 & l & 0
\end{pmatrix}=\det\begin{pmatrix}f & g\\
k & l
\end{pmatrix}\cdot\det\begin{pmatrix}{\bf d} & {\bf e}\\
{\bf i} & {\bf j}
\end{pmatrix},
\]
and the smaller cases
\[
\det\begin{pmatrix}0 & {\bf d}\\
a & 0
\end{pmatrix}=-a\cdot{\bf d},
\qquad
\det\begin{pmatrix}0 & {\bf d} & 0\\
f & 0 & g\\
0 & {\bf i} & 0
\end{pmatrix}=0.
\]

We also recall a well-known property of determinants, which
is easy to prove. Let $M=\left(M_{i,j}\right)$ be an $n\times n$ matrix,
and $\lambda$ be a constant. Then
\begin{equation}\label{2.12}
\det(\lambda M_{i,j})
= \lambda^n\det(M_{i,j}).
\end{equation}
This property will be required in the following sections.

We conclude this section by quoting two of the main results from \cite{DJ}.

\begin{theorem}[\cite{DJ}, Theorem~1.1]\label{thm:2.3}
If $b_k=B_{2k+1}(\frac{x+1}{2})$, then for $n\geq 0$ we have
\begin{equation}\label{2.13}
H_n(b_k) = (-1)^{\binom{n+1}{2}}\left(\frac{x}{2}\right)^{n+1}\prod_{\ell=1}^{n}
\left(\frac{\ell^{4}(x^{2}-\ell^{2})}{4(2\ell+1)(2\ell-1)}\right)^{n+1-\ell}.
\end{equation}
\end{theorem}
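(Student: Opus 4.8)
The plan is to use the standard correspondence between Hankel determinants, orthogonal polynomials, and continued fractions that is invoked throughout the introduction. Recall the classical fact that if $(c_k)$ is the moment sequence of a quasi-definite linear functional $\mathcal{L}$, with monic orthogonal polynomials satisfying $P_{m+1}(y)=(y-\beta_m)P_m(y)-\lambda_m P_{m-1}(y)$, then with the convention of \eqref{1.3a} one has
\[
H_n(c_k)=c_0^{\,n+1}\prod_{\ell=1}^{n}\lambda_\ell^{\,n+1-\ell}.
\]
Only $c_0$ and the off-diagonal coefficients $\lambda_\ell$ enter; the $\beta_\ell$ are irrelevant to the final value. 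Comparing this product with the target \eqref{2.13}, I expect to find $c_0=b_0=\tfrac{x}{2}$ and $\lambda_\ell=-\tfrac{\ell^{4}(x^{2}-\ell^{2})}{4(2\ell+1)(2\ell-1)}$. The global sign should then appear for free, since $\prod_{\ell=1}^{n}(-1)^{\,n+1-\ell}=(-1)^{\sum_{\ell=1}^{n}(n+1-\ell)}=(-1)^{\binom{n+1}{2}}$, which matches the prefactor in \eqref{2.13} exactly. Thus the whole task reduces to pinning down $b_0$ and the $\lambda_\ell$.

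First I would determine the generating function of the subsequence. Extracting the odd part in $t$ of the Bernoulli generating function $\tfrac{te^{yt}}{e^t-1}$ at $y=\tfrac{x+1}{2}$, and simplifying with the reflection formula \eqref{2.4}, one is led to the compact closed form
\[
\sum_{k\ge 0}B_{2k+1}\!\left(\tfrac{x+1}{2}\right)\frac{t^{2k+1}}{(2k+1)!}=\frac{t\,\sinh(xt/2)}{2\,\sinh(t/2)}.
\]
In particular the lowest moment is $b_0=B_1(\tfrac{x+1}{2})=\tfrac{x}{2}$, which already accounts for the factor $(x/2)^{n+1}$. The right-hand side is a ratio of hyperbolic sines, hence essentially a ratio of two Bessel-type ${}_0F_1$ series carrying the parameter $x$, and this is precisely the kind of object that admits a Jacobi-type continued fraction whose partial numerators are the coefficients $\lambda_\ell$ I am after.

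The decisive step is therefore to establish this continued fraction and read off $\lambda_\ell=-\tfrac{\ell^{4}(x^{2}-\ell^{2})}{4(2\ell+1)(2\ell-1)}$, either by matching $b_k$ to a known parametric orthogonal family or by deriving the three-term recurrence directly from a contiguous/functional relation for $t\sinh(xt/2)/\sinh(t/2)$; the $\ell^{4}$-growth of $\lambda_\ell$ points toward the Wilson/continuous dual Hahn level of the Askey scheme. I expect this to be the main obstacle, for two reasons. First, one must produce the recurrence in closed form with exactly the right dependence on the parameter $x$, and the two $\sinh$ factors have different arguments, so the classical Gauss continued fraction for ${}_0F_1$-ratios does not apply verbatim. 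Second, one must reconcile the \emph{exponential} normalization in the generating function above with the \emph{ordinary} moment generating function $\sum_k b_k t^k$ that feeds the $J$-fraction. Along the way I would also check quasi-definiteness (non-vanishing of the intermediate Hankel determinants, at least as rational functions of $x$) so that the orthogonal-polynomial formalism legitimately applies, and confirm the index conventions of \eqref{1.3a}. Once $c_0=\tfrac{x}{2}$ and the $\lambda_\ell$ are secured, substituting into the product formula and collecting the sign yields \eqref{2.13}.
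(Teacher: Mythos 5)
Your overall strategy is the right one, and it matches how this result is actually obtained: the paper quotes the theorem from \cite{DJ} without reproving it, but the machinery you invoke is exactly Lemma~\ref{lem:5.6} here, whose formula \eqref{5.11} gives $H_n({\bf c})=c_0^{n+1}t_1^nt_2^{n-1}\cdots t_n$, and the recurrence coefficients you need are precisely those recorded in \eqref{6a.5}, namely $t_\ell=\frac{\ell^4(\ell^2-x^2)}{4(2\ell+1)(2\ell-1)}$ (the $s_\ell$, as you observe, do not enter the determinant). Your preliminary computations are all correct: $b_0=B_1(\tfrac{x+1}{2})=\tfrac{x}{2}$, the odd-part generating function $t\sinh(xt/2)/(2\sinh(t/2))$, and the bookkeeping $\prod_{\ell=1}^n(-1)^{n+1-\ell}=(-1)^{\binom{n+1}{2}}$ that produces the sign in \eqref{2.13}.

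The genuine gap is the one you yourself flag and then do not close: the three-term recurrence is never established. As written, the values $\lambda_\ell=-\frac{\ell^4(x^2-\ell^2)}{4(2\ell+1)(2\ell-1)}$ are obtained by reverse-engineering the target identity, which makes the argument circular --- the entire content of the theorem is concentrated in proving that these really are the recurrence coefficients of the monic orthogonal polynomials for the moment sequence $B_{2k+1}(\tfrac{x+1}{2})$. Both difficulties you name are real obstacles, not formalities: the $J$-fraction attaches to the ordinary generating function $\sum_k b_kt^k$ rather than to the exponential one you computed, and no off-the-shelf Gauss-type continued fraction applies verbatim to a ratio of hyperbolic sines with different arguments, so one must either derive the recurrence from a functional/contiguous relation or identify the orthogonal family explicitly (with the correct $x$-dependence) and verify its moments. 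You would also need to justify quasi-definiteness so that \eqref{5.11} applies, which again presupposes knowing the $t_\ell$ do not vanish identically. Until that step is supplied, what you have is a correct and well-motivated reduction of the problem, not a proof.
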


\begin{theorem}[\cite{DJ}, Corollary~5.2]\label{thm:2.4}
Let $c_k^{(\nu)}=E_{2k+\nu}(\frac{x+1}{2})$ for $\nu=0, 1, 2$. Then for all
$n\geq 0$ we have
\begin{equation}\label{2.14}
H_n(c_k^{(\nu)})=(-1)^{\binom{n+1}{2}}E_{\nu}(\tfrac{x+1}{2})^{n+1}
\prod_{\ell=1}^n\left(\frac{\ell^{2}}{4}\big(x^{2}-(2\ell+\nu-1)^{2}\big)\right)^{n+1-\ell},
\end{equation}
or more explicitly,
\begin{align}
H_n(c_k^{(0)}) &= (-1)^{\binom{n+1}{2}}\prod_{\ell=1}^n
\left(\frac{\ell^2}{4}\big(x^2-(2\ell-1)^2\big)\right)^{n+1-\ell},\label{2.15}\\
H_n(c_k^{(1)}) &= (-1)^{\binom{n+1}{2}}\left(\frac{x}{2}\right)^{n+1}
\prod_{\ell=1}^n\left(\frac{\ell^2}{4}\big(x^2-(2\ell)^2\big)\right)^{n+1-\ell},\label{2.16}\\
H_n(c_k^{(2)}) &= (-1)^{\binom{n+1}{2}}\left(\frac{x^{2}-1}{4}\right)^{n+1}
\prod_{\ell=1}^{n}\left(\frac{\ell^{2}}{4}(x^{2}-(2\ell+1)^{2})\right)^{n+1-\ell}.\label{2.17}
\end{align}
\end{theorem}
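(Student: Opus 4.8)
The plan is to realise each $c_k^{(\nu)}$ as the moment sequence $\mu_k=\mathcal{L}_\nu(\tau^k)$ of a linear functional and to reduce the evaluation to the standard product formula
\[
H_n(\mu_k)=\mu_0^{\,n+1}\prod_{\ell=1}^{n}\lambda_\ell^{\,n+1-\ell},
\]
which holds whenever the monic orthogonal polynomials for $\mathcal{L}_\nu$ satisfy a three-term recurrence $p_{\ell+1}(\tau)=(\tau-\beta_\ell)p_\ell(\tau)-\lambda_\ell p_{\ell-1}(\tau)$. Since this formula involves only the off-diagonal coefficients $\lambda_\ell$ together with the initial moment $\mu_0=c_0^{(\nu)}=E_\nu(\tfrac{x+1}{2})$, the whole problem collapses to identifying the $\lambda_\ell$; the factor $E_\nu(\tfrac{x+1}{2})^{n+1}$ in \eqref{2.14} is then simply $\mu_0^{n+1}$.

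To locate the functional and to see why there are exactly three cases, I would first pass to generating functions. Simplifying the series in \eqref{1.2} at $\frac{x+1}{2}$ yields
\[
\sum_{n\ge 0}E_n(\tfrac{x+1}{2})\frac{(2s)^{n}}{n!}=\frac{e^{xs}}{\cosh s},
\]
whose even and odd parts in $s$ are $\frac{\cosh(xs)}{\cosh s}$ and $\frac{\sinh(xs)}{\cosh s}$. Thus $c_k^{(0)}$ arises from the even part, $c_k^{(1)}$ from the odd part, and $c_k^{(2)}$ from the once-shifted even sequence, $c_k^{(2)}=c_{k+1}^{(0)}$; this trichotomy is precisely why $\nu$ ranges over $\{0,1,2\}$. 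I would then determine the recurrence coefficients from the Jacobi continued fraction of the \emph{ordinary} generating function $\sum_k c_k^{(\nu)}u^k$, either by recognising $\mathcal{L}_\nu$ within the Askey scheme (the quartic-in-$\ell$ shape of the answer points toward the continuous dual Hahn / Wilson level) or by constructing the orthogonal polynomials directly and verifying orthogonality through a contiguous relation for $E_n(\tfrac{x+1}{2})$ obtained from differentiation and the reflection formula \eqref{2.4}. The target outcome is
\[
\lambda_\ell=\frac{\ell^{2}}{4}\big((2\ell+\nu-1)^{2}-x^{2}\big).
\]

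The main obstacle is exactly this middle step. The delicate point is that it is the ordinary generating function $\sum_k c_k^{(\nu)}u^k$, not the hyperbolic ratio above, whose continued fraction yields the recurrence; the factorial normalisation separating the two does not affect $H_n$ but must be tracked carefully when extracting the $\lambda_\ell$. A related technical issue is the shift for $\nu=2$: since $c_k^{(2)}=c_{k+1}^{(0)}$, its Hankel determinant is that of a shifted moment sequence, which I would handle via the associated (kernel) polynomials of the $\nu=0$ functional, or by verifying the $\nu=2$ continued fraction directly in the same framework.

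Once the $\lambda_\ell$ are established, the result follows by bookkeeping. From
\[
\prod_{\ell=1}^{n}\Big(\tfrac{\ell^{2}}{4}\big(x^{2}-(2\ell+\nu-1)^{2}\big)\Big)^{n+1-\ell}
=(-1)^{\sum_{\ell=1}^{n}(n+1-\ell)}\prod_{\ell=1}^{n}\lambda_\ell^{\,n+1-\ell}
\]
together with $\sum_{\ell=1}^{n}(n+1-\ell)=\binom{n+1}{2}$, the product formula yields \eqref{2.14} with the stated sign $(-1)^{\binom{n+1}{2}}$. Finally, the explicit forms \eqref{2.15}--\eqref{2.17} follow by substituting $\nu=0,1,2$ and using $E_0(\tfrac{x+1}{2})=1$, $E_1(\tfrac{x+1}{2})=\tfrac{x}{2}$, and $E_2(\tfrac{x+1}{2})=\tfrac{x^{2}-1}{4}$.
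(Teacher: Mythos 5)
This theorem is imported verbatim from \cite{DJ} (Corollary~5.2 there); the present paper offers no proof of it, so the only comparison available is with the general orthogonal-polynomial machinery the paper does set up in Section~5 (Lemmas~\ref{lem:5.5} and~\ref{lem:5.6}), and your proposal is indeed built on exactly that machinery: realize $c_k^{(\nu)}$ as moments, and invoke $H_n = c_0^{n+1}t_1^n\cdots t_n$ from \eqref{5.11}. Your surrounding bookkeeping is all correct --- $\mu_0=E_\nu(\tfrac{x+1}{2})$ with the three special values $1$, $\tfrac{x}{2}$, $\tfrac{x^2-1}{4}$, the sign count $\sum_{\ell=1}^n(n+1-\ell)=\binom{n+1}{2}$, the generating-function reduction to $e^{xs}/\cosh s$, and the observation $c_k^{(2)}=c_{k+1}^{(0)}$.

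However, there is a genuine gap: the entire mathematical content of the theorem is the identification $t_\ell=\tfrac{\ell^2}{4}\big((2\ell+\nu-1)^2-x^2\big)$, and at precisely this point the proposal substitutes a menu of strategies (``recognise $\mathcal{L}_\nu$ within the Askey scheme,'' ``verify orthogonality through a contiguous relation,'' ``handle $\nu=2$ via kernel polynomials'') for an actual derivation. None of these is carried out, and none is routine: establishing the three-term recurrence for the polynomials orthogonal with respect to $E_{2k+\nu}(\tfrac{x+1}{2})$ is exactly what \cite{DJ} is about. Two smaller cautions: (i) your remark that ``the factorial normalisation\ldots does not affect $H_n$'' is false as literally stated --- the coefficients of $e^{xs}/\cosh s$ are $4^k c_k^{(0)}/(2k)!$, and dividing by $(2k)!$ changes the Hankel determinants essentially (compare the entries for $B_{2k+2}$ and $B_{2k+2}/(2k+2)!$ in Section~7); the exponential generating function can only serve as a heuristic for locating the measure, while the recurrence must come from the $J$-fraction of the \emph{ordinary} generating function, as you partly acknowledge. (ii) The product formula \eqref{5.11} presupposes that the lower-order Hankel determinants are nonzero so that the orthogonal polynomials exist; since \eqref{2.14} vanishes at finitely many $x$, one should either argue for generic $x$ and conclude by polynomial identity, or note this hypothesis explicitly. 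As it stands the proposal is a credible plan rather than a proof.
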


\section{Sums and Differences of Bernoulli polynomials}

While the Hankel determinants of the Euler numbers $E_k$ and Euler polynomials
$E_k(x)$ are well-known (see \eqref{1.4} and \eqref{2.8}), this is not the 
case for $H_n(b_k)$,
where $b_k=k\cdot E_{k-1}(x)$ for $k\geq 1$, and $b_0=0$. In order to deal with
this case, one could try to use the identity \eqref{2.2} in the form 
\begin{equation}\label{6.1}
k\cdot E_{k-1}(x) = -2^k\left(B_k(\tfrac{x}{2})-B_k(\tfrac{x+1}{2})\right),
\end{equation}
It is the purpose of this section to
show that we can obtain meaningful results for much more general differences,
as well as sums, of Bernoulli polynomials than the right-hand side of 
\eqref{6.1}.

We fix integers $q\geq 1$ and $0\leq r<s$, and define
\begin{equation}\label{6.2}
b_k^{\pm}(q,r,s;x) := B_k(\tfrac{x+r}{q})\pm B_k(\tfrac{x+s}{q}),
\qquad k=0, 1, 2,\ldots
\end{equation}
First we show that, just as in the case of Bernoulli and Euler polynomials,
the Hankel determinants of the polynomials in \eqref{6.2} do not depend on $x$.
This will greatly simplify further work in this section.

\begin{lemma}\label{lem:6.1}
For any $n\geq 0$, $H_n\big(b_k^{\pm}(q,r,s;x)\big)$ is independent of $x$.
\end{lemma}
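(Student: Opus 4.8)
The plan is to reduce the claim to identity \eqref{2.6} of Lemma~\ref{lem:2.1}, which already guarantees that any sequence of the binomial-convolution form $\sum_{j=0}^k\binom{k}{j}c_j x^{k-j}$ has an $x$-independent Hankel determinant. The entire task is therefore to exhibit $b_k^{\pm}(q,r,s;x)$ in exactly this shape, with coefficients that do not involve $x$.

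First I would invoke the translation (addition) formula for Bernoulli polynomials, namely $B_k(u+v)=\sum_{j=0}^k\binom{k}{j}B_j(u)v^{k-j}$, which follows from \eqref{2.1} by expanding $B_k$ about $u$, or directly from the generating function \eqref{1.1} by writing $\frac{t e^{(u+v)t}}{e^t-1}=\frac{te^{ut}}{e^t-1}\cdot e^{vt}$ and reading off the coefficient of $t^k/k!$. Writing $\frac{x+r}{q}=\frac{r}{q}+\frac{x}{q}$ and applying this with $u=\frac{r}{q}$ and $v=\frac{x}{q}$ gives
\[
B_k\Big(\tfrac{x+r}{q}\Big)=\sum_{j=0}^k\binom{k}{j}B_j\Big(\tfrac{r}{q}\Big)\Big(\tfrac{x}{q}\Big)^{k-j},
\]
and similarly for the term involving $s$.

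Adding or subtracting these two expansions collapses everything into a single sum,
\[
b_k^{\pm}(q,r,s;x)=\sum_{j=0}^k\binom{k}{j}\,a_j\,\Big(\tfrac{x}{q}\Big)^{k-j},
\qquad a_j:=B_j\Big(\tfrac{r}{q}\Big)\pm B_j\Big(\tfrac{s}{q}\Big),
\]
where the $a_j$ are manifestly independent of $x$; indeed $a_j=b_j^{\pm}(q,r,s;0)$. This is precisely the hypothesis of \eqref{2.6}, with the role of the variable played by $y:=x/q$ and with base sequence $(a_k)$. Applying \eqref{2.6} then yields $H_n\big(b_k^{\pm}(q,r,s;x)\big)=H_n(a_k)$, a quantity that does not depend on $x$, which is the assertion.

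I do not expect a serious obstacle here, as the argument is a clean specialization of Lemma~\ref{lem:2.1}. The only points requiring care are, first, recording the translation formula in the correct binomial-convolution form, and second, recognizing that \eqref{2.6} is insensitive to the harmless rescaling $x\mapsto x/q$ and applies verbatim with $x/q$ as its variable, so that the factor $1/q$ plays no role in the independence statement. The uniformity of the argument across both the $+$ and $-$ cases is automatic, since either choice of sign leaves the coefficients $a_j$ free of $x$.
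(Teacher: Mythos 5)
Your argument is correct and is essentially identical to the paper's own proof: the authors likewise expand $B_k(\tfrac{x+r}{q})$ and $B_k(\tfrac{x+s}{q})$ via the translation formula \cite[Eq.~24.4.12]{DLMF}, combine the two sums into a single binomial convolution with coefficients $b_j^{\pm}(q,r,s;0)$, and conclude by Lemma~\ref{lem:2.1}. No further comment is needed.
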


\begin{proof}
Using a well-known identity (see, e.g., \cite[Eq.~24.4.12]{DLMF}) for 
Bernoulli polynomials, we get with \eqref{6.2},
\begin{align*}
b_k^{\pm}(q,r,s;x) 
&= \sum_{j=0}^k\binom{k}{j}B_j(\tfrac{r}{q})\big(\tfrac{x}{q}\big)^{k-j}
\pm\sum_{j=0}^k\binom{k}{j}B_j(\tfrac{s}{q})\big(\tfrac{x}{q}\big)^{k-j}\\
&= \sum_{j=0}^k\binom{k}{j}\left(B_j(\tfrac{r}{q})
\pm B_j(\tfrac{s}{q})\right)\big(\tfrac{x}{q}\big)^{k-j} \\
&= \sum_{j=0}^k\binom{k}{j}b_k^{\pm}(q,r,s;0)\big(\tfrac{x}{q}\big)^{k-j}.
\end{align*}
By Lemma~\ref{lem:2.1} this means that for any $n$ the $n$th Hankel determinant
of the sum on the right is independent of $x$, which completes the proof.
\end{proof}

The next lemma shows how we can exploit Lemma~\ref{lem:6.1} by being able to
choose an appropriate value for $x$.

\begin{lemma}\label{lem:6.2}
For fixed integers $q\geq 1$ and $0\leq r<s$, we have
\begin{equation}\label{6.3}
b_k^{-}(q,r,s;\tfrac{q-r-s}{2}) = \begin{cases}
0 &\hbox{if $k$ is even},\\
2B_{2\mu+1}(\tfrac{q+r-s}{2q}) &\hbox{if}\;\;k=2\mu+1,
\end{cases}
\end{equation}
and
\begin{equation}\label{6.3a}
b_k^{+}(q,r,s;\tfrac{q-r-s}{2}) = \begin{cases}
2B_{2\mu}(\tfrac{q+r-s}{2q}) &\hbox{if}\;\;k=2\mu,\\
0 &\hbox{if $k$ is odd}.
\end{cases}
\end{equation}
\end{lemma}

\begin{proof}
With \eqref{6.2} we get
\begin{align*}
b_k^{\pm}(q,r,s;\tfrac{q-r-s}{2})
&= B_k(\tfrac{q+r-s}{2q})\pm B_k(\tfrac{q-r+s}{2q})
= B_k(\tfrac{q+r-s}{2q})\pm B_k(1-\tfrac{q+r-s}{2q}) \\
&= B_k(\tfrac{q+r-s}{2q})\pm (-1)^kB_k(\tfrac{q+r-s}{2q}),
\end{align*}
where we have used the reflection formula \eqref{2.4}. The
desired identities \eqref{6.3} and \eqref{6.3a} now follow immediately.
\end{proof}

The significance of Lemma~\ref{lem:6.2} lies in the fact that the Hankel
matrices of the sequence in \eqref{6.3} are ``checkerboard matrices", and
therefore Lemma~\ref{lem:2.2} applies. The following is the main result of this
section.

\begin{theorem}\label{thm:6.3}
Let $q\geq 1$ and $0\leq r<s$ be fixed integers and set 
$b_k:=b_k^{-}(q,r,s;x)$. Then for all integers $m\geq 0$ we have 
$H_{2m}(b_k)=0$ and
\begin{equation}\label{6.4}
H_{2m+1}(b_k)=(-1)^{m+1}\left(\frac{s-r}{q^{m+1}}\right)^{2m+2}\prod_{\ell=1}^m
\left(\frac{\ell^4\big((s-r)^2-(q\ell)^2\big)}{4(2\ell+1)(2\ell-1)}\right)^{2(m+1-\ell)}.
\end{equation}
\end{theorem}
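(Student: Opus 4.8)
The plan is to reduce this to the known evaluation in Theorem~\ref{thm:2.3} by exploiting the freedom in $x$ granted by Lemma~\ref{lem:6.1} together with the checkerboard structure furnished by Lemma~\ref{lem:6.2}. Since $H_n(b_k)$ is independent of $x$, I would first specialize to $x=\tfrac{q-r-s}{2}$. By Lemma~\ref{lem:6.2} the sequence then becomes $b_{2\mu}=0$ and $b_{2\mu+1}=2B_{2\mu+1}(w)$, where I abbreviate $w:=\tfrac{q+r-s}{2q}$. Consequently the Hankel matrix $(b_{i+j})_{0\leq i,j\leq n}$ vanishes whenever $i+j$ is even, so it is exactly a checkerboard matrix of the type treated in the second half of Lemma~\ref{lem:2.2}.

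Next I would apply Lemma~\ref{lem:2.2} with $N=n+1$. When $n=2m$ the matrix has odd size $N=2m+1$, and \eqref{2.11} gives $H_{2m}(b_k)=0$, which is the first assertion. When $n=2m+1$ the size $N=2m+2$ is even, and \eqref{2.10} factors the determinant as $(-1)^{m+1}$ times the product of two $(m+1)\times(m+1)$ minors, namely $\det(M_{2i+1,2j})$ and $\det(M_{2i,2j+1})$. The key observation is that both minors have $(i,j)$-entry equal to $b_{2(i+j)+1}=2B_{2(i+j)+1}(w)$, so they are identical; pulling the factor $2$ out of the $(m+1)$-row matrix via \eqref{2.12}, each minor equals $2^{m+1}H_m\big(B_{2k+1}(w)\big)$. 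Hence
\[
H_{2m+1}(b_k)=(-1)^{m+1}\,2^{2(m+1)}\,H_m\big(B_{2k+1}(w)\big)^2.
\]

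To evaluate $H_m\big(B_{2k+1}(w)\big)$ I would invoke Theorem~\ref{thm:2.3}. Writing $w=\tfrac{X+1}{2}$ forces $X=\tfrac{r-s}{q}$, so that $X^2-\ell^2=\big((s-r)^2-(q\ell)^2\big)/q^2$. Substituting into \eqref{2.13} with $n=m$ and then squaring kills the sign $(-1)^{\binom{m+1}{2}}$ and doubles every exponent. Combining the external factor $2^{2(m+1)}$ with $(X/2)^{2(m+1)}$ collapses to $X^{2(m+1)}=\big((s-r)/q\big)^{2m+2}$, while each hidden factor $1/q^2$ inside $X^2-\ell^2$ contributes $q^{-4(m+1-\ell)}$ to the product; since $\sum_{\ell=1}^m(m+1-\ell)=\binom{m+1}{2}$, the accumulated denominator becomes $q^{2(m+1)^2}=q^{(m+1)(2m+2)}$, which is precisely the power appearing in \eqref{6.4}. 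This reassembles the stated identity.

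The bookkeeping I expect to be most error-prone is tracking the powers of $q$: they enter both through $X^{2m+2}$ and, more subtly, through the $q^{-2}$ concealed in every factor $X^2-\ell^2$ of the product, and these two sources must combine to yield the single clean denominator $q^{(m+1)(2m+2)}$ rather than the naive $q^{2m+2}$. The index ranges in Lemma~\ref{lem:2.2} also require care to confirm that both checkerboard minors have the same size $m+1$ and therefore coincide; a quick check of the case $m=0$, where the formula should return $-\,(s-r)^2/q^2$, is a convenient safeguard against sign and exponent slips.
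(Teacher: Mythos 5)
Your proposal is correct and follows essentially the same route as the paper: specialize to $x=\tfrac{q-r-s}{2}$ via Lemmas~\ref{lem:6.1} and~\ref{lem:6.2}, apply the checkerboard factorization of Lemma~\ref{lem:2.2} (with \eqref{2.11} giving the even case and \eqref{2.10} giving $(-1)^{m+1}$ times the square of a single $(m+1)\times(m+1)$ minor), extract the factor $2^{m+1}$ by \eqref{2.12}, and evaluate the remaining determinant by Theorem~\ref{thm:2.3} with $x=(r-s)/q$. Your bookkeeping of the powers of $q$, including the $q^{-2}$ hidden in each factor $x^2-\ell^2$, is accurate and matches \eqref{6.4}.
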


\begin{proof}
By Lemmas~\ref{lem:6.1} and~\ref{lem:6.2}, we can fix $x=(q-r-s)/2$ for $b_k$ and apply the second part of Lemma~\ref{lem:2.2} with $M_{i,j}=b_{i+j}$. Then
\eqref{2.11} immediately gives the first statement. In order to prove 
\eqref{6.4}, we begin by using \eqref{2.10} with $n=2m+2$, which gives
\begin{equation}\label{6.5}
\det_{0\leq i,j\leq 2m+1}\big(b_{i+j}\big)
= (-1)^{m+1}\left(\det_{0\leq i,j\leq m}\big(b_{2(i+j)+1}\big)\right)^2
= (-1)^{m+1}H_m(b_{2\mu+1})^2.
\end{equation}
By \eqref{6.3}, and using \eqref{2.12}, we have
\begin{equation}\label{6.6}
H_m(b_{2\mu+1}) = 2^{m+1}H_m\big(B_{2\mu+1}(\tfrac{q+r-s}{2q})\big).
\end{equation}
Finally we use Theorem~\ref{thm:2.3} with $x=(r-s)/q$, which gives 
$(x+1)/2=(q+r-s)/2q$, as required. Combining this with \eqref{2.13}, \eqref{6.6}
and \eqref{6.5}, we readily obtain \eqref{6.4}.
\end{proof}

We note that there is no analogue to Theorem~\ref{thm:6.3} for
$b_k^{+}(q,r,s;x)$. The main reason for this is the absence of an identity such
as \eqref{2.13} for even-index Bernoulli polynomials. This fact is briefly
discussed in \cite[Ch.~4]{DJ}. 

We now consider a special case of Theorem~\ref{thm:6.3}. Returning to 
\eqref{6.1}, we get the following results.

\begin{corollary}\label{cor:6.4}
For all integers $m\geq 0$ we have 
$H_{2m}\big(kE_{k-1}(x)\big)=H_{2m}\big(kE_{k-1}\big)=0$, and
\begin{align}
H_{2m+1}\big(kE_{k-1}(x)\big)
&= (-1)^{m+1}\prod_{\ell=1}^m\ell!^{8},\label{6.7} \\
H_{2m+1}\big(kE_{k-1}\big)
&= (-1)^{m+1}2^{4m(m+1)}\prod_{\ell=1}^m\ell!^{8}.\label{6.8}
\end{align}
\end{corollary}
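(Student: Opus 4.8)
The plan is to derive Corollary~\ref{cor:6.4} directly from Theorem~\ref{thm:6.3} by choosing the parameters $(q,r,s)$ that make $b_k^{-}(q,r,s;x)$ coincide with the sequence $k\,E_{k-1}(x)$. Identity~\eqref{6.1} shows that $k\,E_{k-1}(x) = -2^k\big(B_k(\tfrac{x}{2})-B_k(\tfrac{x+1}{2})\big)$, so up to the factor $-2^k$ the relevant sequence is $b_k^{-}(2,0,1;x) = B_k(\tfrac{x}{2})-B_k(\tfrac{x+1}{2})$, i.e.\ $q=2$, $r=0$, $s=1$. First I would record that the sign $(-1)$ and the power $2^k$ affect the Hankel determinant in a controlled way: the overall sign $(-1)^k$ contributes $(-1)^{\sum_{i+j}}$ over the matrix, but since every entry of $H_{2m+1}$ is multiplied by $-1$, identity~\eqref{2.12} with $\lambda=-1$ gives a factor $(-1)^{2m+2}=1$; meanwhile the factor $2^k$ on the $k$th entry is exactly the situation of~\eqref{2.5} with $x=2$, contributing $2^{(2m+1)(2m+2)}$. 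This explains both the vanishing of the even-index determinants (inherited immediately from $H_{2m}(b_k)=0$ in Theorem~\ref{thm:6.3}) and the discrepancy between~\eqref{6.7} and~\eqref{6.8}.

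For the polynomial version~\eqref{6.7}, the cleanest route is to compute $H_{2m+1}\big(k E_{k-1}(x)\big)$ by first substituting $(q,r,s)=(2,0,1)$ into the product formula~\eqref{6.4}. With $s-r=1$ and $q=2$, the prefactor becomes $\big(1/2^{m+1}\big)^{2m+2}=2^{-2(m+1)^2}$, and the $\ell$th factor in the product simplifies because $(s-r)^2-(q\ell)^2 = 1-4\ell^2 = -(2\ell-1)(2\ell+1)$, so that $\tfrac{\ell^4(1-4\ell^2)}{4(2\ell+1)(2\ell-1)} = -\tfrac{\ell^4}{4}$. I would then track the sign coming from this negative factor across the product and combine it with the $(-1)^{m+1}$ and the $2^k$-scaling from~\eqref{2.5}; the hope is that the powers of $2$ and the residual signs collapse so that $\prod_{\ell=1}^m (\ell^4/4)^{2(m+1-\ell)}$ reduces, after the scaling correction, to $\prod_{\ell=1}^m \ell!^{8}$. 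The key bookkeeping identity here is $\prod_{\ell=1}^m \ell^{4\cdot 2(m+1-\ell)} = \left(\prod_{\ell=1}^m \ell!\right)^{8}$ after reindexing, which is exactly the standard manipulation converting a product of powers of $\ell$ into a product of factorials.

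The number version~\eqref{6.8} then follows from~\eqref{6.7} by the substitution $x\mapsto 0$ together with~\eqref{2.5}. Concretely, $k E_{k-1}(0)$ relates to $k E_{k-1}$ via the first identity in~\eqref{2.1}; more directly, I would apply Lemma~\ref{lem:2.1} in the form~\eqref{2.6} or observe that setting $x=0$ in $E_{k-1}(x)$ and comparing with $E_{k-1}$ introduces only a power of $2$, giving the extra factor $2^{4m(m+1)}$ displayed in~\eqref{6.8}. In fact the discrepancy $2^{4m(m+1)} = 2^{(2m+1)(2m+2)}/2^{2(m+1)^2} \cdot(\text{correction})$ should be read off directly from the $2^k$-scaling~\eqref{2.5}, since $E_{k-1}(x)$ and $E_{k-1}$ differ by the scaling inherent in~\eqref{2.1}.

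The main obstacle will be the sign and power-of-$2$ bookkeeping in the second paragraph: I must verify that the $(-1)^{m+1}$ prefactor, the $m$ copies of the negative factor $-\ell^4/4$ raised to the varying exponents $2(m+1-\ell)$, and the scaling factors from~\eqref{2.5} and~\eqref{2.12} all combine to give precisely $(-1)^{m+1}$ with no stray sign, and that every power of $2$ cancels in~\eqref{6.7} while leaving exactly $2^{4m(m+1)}$ in~\eqref{6.8}. This is routine but error-prone, since the exponent $\sum_{\ell=1}^m 2(m+1-\ell) = m(m+1)$ must be computed and matched carefully against the exponents appearing in the prefactors; I would double-check the final exponent of $2$ against the closed form $2^{4m(m+1)}$ as a consistency test.
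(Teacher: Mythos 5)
Your route to \eqref{6.7} is exactly the paper's: identify $kE_{k-1}(x)=-2^k\,b_k^{-}(2,0,1;x)$ from \eqref{6.1}, absorb the $-1$ via \eqref{2.12} (harmless since the matrix size $2m+2$ is even) and the $2^k$ via \eqref{2.5}, then substitute $(q,r,s)=(2,0,1)$ into \eqref{6.4}. Your bookkeeping checks out: the negative factors $-\ell^4/4$ are raised to the even exponents $2(m+1-\ell)$ and so contribute no sign, and the powers of two cancel exactly, since $(2m+1)(2m+2)-2(m+1)^2-2m(m+1)=0$. The vanishing of the even-index determinants is indeed inherited directly from Theorem~\ref{thm:6.3}.

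The step from \eqref{6.7} to \eqref{6.8} as you describe it, however, would fail. You propose substituting $x=0$ and claim that ``setting $x=0$ in $E_{k-1}(x)$ and comparing with $E_{k-1}$ introduces only a power of $2$,'' citing the first identity in \eqref{2.1}. The first identity in \eqref{2.1} is the Bernoulli one, and more importantly $E_n(0)$ is \emph{not} a power of $2$ times $E_n$: for instance $E_2(0)=0$ while $E_2=-1$, and in general $E_n(0)=\tfrac{2}{n+1}(1-2^{n+1})B_{n+1}$. The unique point at which the relation is a pure scaling is $x=\tfrac12$, where the \emph{second} identity in \eqref{2.1} gives $E_n=2^nE_n(\tfrac12)$; this is what the paper uses, writing $kE_{k-1}=\tfrac12\cdot 2^k\cdot kE_{k-1}(\tfrac12)$ and applying \eqref{2.12} and \eqref{2.5} to get the factor $(\tfrac12)^{2m+2}2^{(2m+1)(2m+2)}=2^{4m(m+1)}$. (Your fallback of invoking \eqref{2.6} can also be made to work, since $kE_{k-1}(x)=\sum_i\binom{k}{i}\bigl(i\,E_{i-1}/2^{\,i-1}\bigr)(x-\tfrac12)^{k-i}$ after the reindexing $k\binom{k-1}{i-1}=i\binom{k}{i}$, but you would need to carry that out; as stated, the $x=0$ substitution is simply wrong.)
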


\begin{proof}
Comparing \eqref{6.1} with \eqref{6.2}, we see that
$kE_{k-1}(x)=-2^kb_k^{-}(2,0,1;x)$. If we use \eqref{2.12}, \eqref{2.5}, and 
\eqref{6.4}, we get \eqref{6.7} after some straightforward manipulations.

To obtain \eqref{6.8}, we use the well-known identity $E_n=2^nE_n(\frac{1}{2})$,
which is a special case of the second part of \eqref{2.1} with $x=1/2$. Then 
with Lemma~\ref{lem:2.1} we get
\begin{align*}
H_{2m+1}\big(kE_{k-1}\big)
&=H_{2m+1}\big(\tfrac{1}{2}2^kkE_{k-1}(\tfrac{1}{2})\big) \\
&=\big(\tfrac{1}{2}\big)^{2m+2}2^{(2m+2)(2m+1)}
H_{2m+1}\big(kE_{k-1}(\tfrac{1}{2})\big),
\end{align*}
and \eqref{6.8} follows immediately from \eqref{6.7}.
\end{proof}

Theorem~\ref{thm:6.3} can also be used to deal with a few special cases of
character analogues of Bernoulli numbers and polynomials. Let $\chi$ be a 
primitive character with conductor $q$. Then the generalized Bernoulli numbers
and polynomials belonging to $\chi$ are defined by
\begin{equation}\label{6.10}
\sum_{a=1}^q\frac{\chi(a)te^{at}}{e^{qt}-1} 
= \sum_{n=0}^\infty B_{n,\chi}\frac{t^n}{n!},\qquad
B_{n,\chi}(x) = \sum_{k=0}^n\binom{n}{k}B_{k,\chi}x^{n-k},
\end{equation}
so that $B_{n,\chi}(0)=B_{n,\chi}$ for all $n\geq 0$.
These objects contain both the Bernoulli and Euler numbers and polynomials as
special cases. Indeed,
\begin{equation}\label{6.10a}
B_n(x)=B_{n,\chi_{0}}(x-1),\qquad
E_{n}(x)=-\frac{2^{1-n}}{n+1}B_{n+1,\chi_{4}}(2x-1),
\end{equation}
where $\chi_{0}$ is the trivial character and $\chi_{4}$ is the unique (non-trivial)
character with conductor 4; see, e.g., \cite[Sect.~24.16(ii)]{DLMF}. 
It was the right-hand identity in \eqref{6.10a}, by the way, that led us to 
first consider Hankel determinants of $k\cdot E_{k-1}(x)$.

On the other hand, all
generalized Bernoulli polynomials can be written in terms of the ordinary
Bernoulli polynomials by way of the identity
\begin{equation}\label{6.11}
B_{n,\chi}(x) = q^{n-1}\sum_{a=1}^q\chi(a)B_n(\tfrac{a+x}{q}),
\end{equation}
which follows easily from comparing the generating functions in \eqref{1.1} and
\eqref{6.10}. 

There are just three primitive characters that have exactly two nonzero values
between $a=1$ and $a=q$, namely those with conductors $q=3$, 4, and 6. In all
cases we have $\chi_q(1)=1$, $\chi_q(q-1)=-1$, and 0 elsewhere. With 
\eqref{6.11} and \eqref{6.2} we then have
\begin{equation}\label{6.12}
B_{k,\chi_q} = q^{k-1}b_k^{-}(q,1,q-1;0)\qquad(q=3, 4, 6).
\end{equation}
With Theorem~\ref{thm:6.3} we then get the following result.

\begin{corollary}\label{cor:6.5}
For $q=3, 4, 6$ and for all integers $m\geq 0$ we have $H_{2m}(B_{k,\chi_q})=0$
and
\begin{align}
H_{2m+1}(B_{k,\chi_q})&=(-1)^{m+1}\left(q^{m-1}(q-2)\right)^{2m+2}\label{6.13}\\
&\quad\times\prod_{\ell=1}^m \left(\frac{\ell^4\big((q-2)^2-(q\ell)^2\big)}{4(2\ell+1)(2\ell-1)}\right)^{2(m+1-\ell)}.\nonumber
\end{align}
\end{corollary}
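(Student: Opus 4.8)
The plan is to reduce Corollary~\ref{cor:6.5} to Theorem~\ref{thm:6.3} by way of the identity \eqref{6.12}, which expresses the generalized Bernoulli numbers $B_{k,\chi_q}$ as scalar multiples of the sequence $b_k^{-}(q,1,q-1;0)$ for $q=3,4,6$. The main observation is that \eqref{6.12} involves the factor $q^{k-1}$, which depends on the index $k$, so I cannot simply pull out a single constant. Instead I would write $B_{k,\chi_q} = \tfrac{1}{q}\,q^{k}\,b_k^{-}(q,1,q-1;0)$ and treat the $q^{k}$ as the weight $x^{k}$ in the scaling identity \eqref{2.5} of Lemma~\ref{lem:2.1}.

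First I would set $b_k := b_k^{-}(q,1,q-1;0)$ and note that, since $r=1$ and $s=q-1$, we have $s-r = q-2$. Theorem~\ref{thm:6.3} then applies directly and immediately yields $H_{2m}(b_k)=0$, hence $H_{2m}(B_{k,\chi_q})=0$ as well (multiplying each entry of a singular Hankel matrix by index-dependent scalars keeps it singular, as \eqref{2.5} shows the determinant is a constant times $H_{2m}(b_k)=0$). For the odd case I would compute, using \eqref{2.5} with $x=q$ and $n=2m+1$ together with the scalar factor $1/q$ applied to each of the $2m+2$ rows via \eqref{2.12},
\begin{align*}
H_{2m+1}(B_{k,\chi_q})
&= H_{2m+1}\big(\tfrac{1}{q}\,q^{k}\,b_k\big)
= \big(\tfrac{1}{q}\big)^{2m+2}\,q^{(2m+1)(2m+2)}\,H_{2m+1}(b_k)\\
&= q^{(2m+2)(2m)}\,H_{2m+1}(b_k).
\end{align*}

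Next I would substitute the explicit value of $H_{2m+1}(b_k)$ from \eqref{6.4} of Theorem~\ref{thm:6.3}, with $s-r$ replaced by $q-2$. The leading factor there is $(-1)^{m+1}\big((q-2)/q^{m+1}\big)^{2m+2}$, and combining this with the prefactor $q^{2m(2m+2)}$ from the scaling step should collapse the powers of $q$ into the stated $\big(q^{m-1}(q-2)\big)^{2m+2}$, since the exponent of $q$ works out to $2m(2m+2)-(m+1)(2m+2)=(m-1)(2m+2)$. The product over $\ell$ in \eqref{6.4} carries over verbatim with $(s-r)^2=(q-2)^2$, giving the product displayed in \eqref{6.13}. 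I anticipate that the only real bookkeeping hurdle is tracking the powers of $q$ correctly through the two scaling steps and confirming they match the claimed exponent $m-1$; this is a routine but error-prone computation, and everything else follows formally from the cited results. No additional properties of the characters $\chi_q$ are needed beyond the two-nonzero-value structure already recorded just before \eqref{6.12}.
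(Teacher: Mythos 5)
Your proposal is correct and follows essentially the same route as the paper: both reduce to Theorem~\ref{thm:6.3} via \eqref{6.12}, writing $B_{k,\chi_q}=q^{-1}q^k b_k^{-}(q,1,q-1;0)$ and applying \eqref{2.5} and \eqref{2.12} to pick up the factor $q^{-(2m+2)}q^{(2m+1)(2m+2)}=q^{2m(2m+2)}$, which combines with the $\big((q-2)/q^{m+1}\big)^{2m+2}$ from \eqref{6.4} to give the stated $\big(q^{m-1}(q-2)\big)^{2m+2}$. The power-of-$q$ bookkeeping in your computation checks out.
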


\begin{proof}
We note that by \eqref{6.12}, and using \eqref{2.5} and \eqref{2.12}, we have
\begin{align*}
H_{2m+1}(B_{k,\chi_q}) &= H_{2m+1}(q^{-1}q^kb_k^{-}(q,1,q-1;0)\\
&= q^{-(2m+2)}q^{(2m+1)(2m+2)}H_{2m+1}(b_k^{-}(q,1,q-1;0)).
\end{align*}
The desired results now follow directly from Theorem~\ref{thm:6.3}.
\end{proof}

Considering the numerator in the right-most fraction in \eqref{6.4}, we get the
following immediate consequence.

\begin{corollary}\label{cor:6.6}
If $q\mid s-r$, then $H_{2m+1}(b_k^{-}(q,r,s;x))=0$ for all $m\geq(s-r)/q$. 
On the other hand, if $q\nmid s-r$, then $H_{2m+1}(b_k^{-}(q,r,s;x))\neq 0$ 
for all $m\geq 0$.
\end{corollary}

We conclude this section with an application of Theorem~\ref{thm:6.3} for $q=1$,
which also illustrates Corollary~\ref{cor:6.6}. Using the well-known identity
\[
x^{k-1}+(x+1)^{k-1}+\cdots+(x+s-1)^{k-1}=\frac{1}{k}\left(B_k(x+s)-B_k(x)\right)
\]
for integers $s\geq 1$ (see, e.g., \cite[Eq.~24.4.9]{DLMF}) we have by
\eqref{6.2}, with $x=1$,
\begin{equation}\label{6.14}
k\left(1+2^{k-1}+\cdots+s^{k-1}\right) = -b_k^{-}(1,0,s;1).
\end{equation}
With \eqref{2.12} we see that the $-$ sign on the right is irrelevant; 
Theorem~\ref{thm:6.3} now implies the following result.

\begin{corollary}\label{cor:6.7}
Let $S_k(s)$ be the left-hand side of \eqref{6.14}. Then for all integers 
$m\geq 0$ we have $H_{2m}(S_k(s))=0$ and
\[
H_{2m+1}(S_k(s))=(-s^2)^{m+1}\prod_{\ell=1}^m
\left(\frac{\ell^4(s^2-\ell^2)}{4(2\ell+1)(2\ell-1)}\right)^{2(m+1-\ell)},
\]
and in particular, $H_{2m+1}(S_k(s))=0$ for $m\geq s$.
\end{corollary}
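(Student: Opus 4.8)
The plan is to obtain this corollary as a direct specialization of Theorem~\ref{thm:6.3} to the case $q=1$ and $r=0$. The starting point is the identity \eqref{6.14}, which already exhibits $S_k(s)$ as $-b_k^{-}(1,0,s;1)$, so the entire task reduces to tracking the effect of the leading sign and of the substitution $q=1$, $r=0$ in the formula \eqref{6.4}.

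First I would dispose of the overall sign. Since $S_k(s) = -b_k^{-}(1,0,s;1)$, the Hankel matrix of $(S_k(s))$ is $(-1)$ times the Hankel matrix of $(b_k^{-}(1,0,s;1))$. For the determinant $H_{2m+1}$ the underlying matrix has even size $2m+2$, so by \eqref{2.12} the scalar factor is $(-1)^{2m+2}=1$ and the sign is irrelevant; for $H_{2m}$ the determinant vanishes by Theorem~\ref{thm:6.3} regardless of any nonzero scalar factor. This is exactly the content of the remark preceding the corollary that ``the $-$ sign on the right is irrelevant,'' and it shows $H_n(S_k(s)) = H_n(b_k^{-}(1,0,s;1))$ for the relevant $n$.

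Next I would apply Theorem~\ref{thm:6.3} with $q=1$ and $r=0$. The first assertion $H_{2m}(S_k(s))=0$ follows immediately from the corresponding statement in the theorem. For the odd-index case, substituting $q=1$, $r=0$ into \eqref{6.4} collapses the prefactor $(-1)^{m+1}\big((s-r)q^{-(m+1)}\big)^{2m+2}$ to $(-1)^{m+1}s^{2m+2}$, which I would then rewrite as $(-s^2)^{m+1}$; simultaneously the factor $(s-r)^2-(q\ell)^2$ inside the product becomes $s^2-\ell^2$. Collecting these reproduces the claimed closed form for $H_{2m+1}(S_k(s))$. Note that Theorem~\ref{thm:6.3} already delivers the $x$-free value, so no separate appeal to the $x$-independence (Lemma~\ref{lem:6.1}) is needed even though we work with $x=1$.

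Finally, for the vanishing statement I would inspect the product and observe that $s^2-\ell^2=0$ precisely at $\ell=s$, since $s,\ell\geq 1$. Whenever $m\geq s$, the index $\ell=s$ lies in the range $1\leq\ell\leq m$ and its exponent $2(m+1-s)$ is a positive integer, so the factor $(s^2-s^2)^{2(m+1-s)}=0$ forces the whole product, and hence $H_{2m+1}(S_k(s))$, to vanish; this is consistent with Corollary~\ref{cor:6.6}, since $q=1$ trivially divides $s-r=s$. There is essentially no obstacle in this argument: the only points demanding care are the parity bookkeeping for the sign via \eqref{2.12} and the elementary rewriting $(-1)^{m+1}s^{2m+2}=(-s^2)^{m+1}$.
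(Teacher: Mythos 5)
Your proposal is correct and follows exactly the paper's route: identify $S_k(s)=-b_k^{-}(1,0,s;1)$ via \eqref{6.14}, dispose of the sign with \eqref{2.12}, and specialize Theorem~\ref{thm:6.3} to $q=1$, $r=0$, with the vanishing for $m\geq s$ read off from the factor $s^2-\ell^2$ at $\ell=s$. The paper merely states this more tersely; your parity bookkeeping for the sign and the rewriting $(-1)^{m+1}s^{2m+2}=(-s^2)^{m+1}$ are the same computation spelled out.
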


A similar result was earlier obtained by Al-Salam and Carlitz \cite[Eq.~(7.1)]{AC}.

\section{Sums and Differences of Euler polynomials}

In this section we present ``Euler analogues" to some of the results in the
previous section. In analogy to \eqref{6.2} we fix integers $q\geq 1$ and
$0\leq r<s$ and define
\begin{equation}\label{4.1}
e_k^{\pm}(q,r,s;x) := E_k(\tfrac{x+r}{q})\pm E_k(\tfrac{x+s}{q}),
\qquad k=0, 1, 2,\ldots
\end{equation}
Since the main identity used in the proof of Lemma~\ref{lem:6.1} also holds
for Euler polynomials (see \cite[Eq.~24.4.13]{DLMF}), we have

\begin{lemma}\label{lem:4.1}
For any $n\geq 0$, $H_n\big(e_k^{\pm}(q,r,s;x)\big)$ is independent of $x$.
\end{lemma}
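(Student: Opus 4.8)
The plan is to mirror the proof of Lemma~\ref{lem:6.1} almost verbatim, replacing the Bernoulli addition formula by its Euler counterpart. First I would invoke the identity \cite[Eq.~24.4.13]{DLMF}, namely
\[
E_k(u+v) = \sum_{j=0}^k\binom{k}{j}E_j(u)\,v^{k-j},
\]
and apply it to each of the two terms defining $e_k^{\pm}(q,r,s;x)$ in \eqref{4.1}, writing $E_k(\tfrac{x+r}{q})=E_k(\tfrac{r}{q}+\tfrac{x}{q})$ and similarly for the $s$-term. This expands each Euler polynomial as a binomial sum in powers of $\tfrac{x}{q}$, with coefficients $E_j(\tfrac{r}{q})$ and $E_j(\tfrac{s}{q})$ respectively.

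Next I would combine the two expansions. Since the power $\big(\tfrac{x}{q}\big)^{k-j}$ is common to both sums, the $\pm$ combination collects into
\[
e_k^{\pm}(q,r,s;x)
= \sum_{j=0}^k\binom{k}{j}\Big(E_j(\tfrac{r}{q})\pm E_j(\tfrac{s}{q})\Big)\big(\tfrac{x}{q}\big)^{k-j}
= \sum_{j=0}^k\binom{k}{j}\,e_j^{\pm}(q,r,s;0)\,\big(\tfrac{x}{q}\big)^{k-j},
\]
recognizing that the inner coefficient is exactly $e_j^{\pm}(q,r,s;0)$. This exhibits the sequence $\big(e_k^{\pm}(q,r,s;x)\big)_k$ as the binomial transform of the sequence $\big(e_j^{\pm}(q,r,s;0)\big)_j$ with parameter $\tfrac{x}{q}$, precisely the form appearing in the hypothesis of the second part of Lemma~\ref{lem:2.1}.

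Finally I would apply \eqref{2.6} of Lemma~\ref{lem:2.1}, with $c_j=e_j^{\pm}(q,r,s;0)$ and the variable there taken to be $\tfrac{x}{q}$, to conclude that $H_n\big(e_k^{\pm}(q,r,s;x)\big)=H_n\big(e_k^{\pm}(q,r,s;0)\big)$ for every $n\geq 0$. Since the right-hand side is manifestly independent of $x$, this completes the proof. I do not anticipate any genuine obstacle: the argument is structurally identical to that of Lemma~\ref{lem:6.1}, and the only point requiring care is the trivial bookkeeping check that the Euler addition formula has the same shape as the Bernoulli one, so that the binomial-transform hypothesis of Lemma~\ref{lem:2.1} is met exactly; the proof is therefore essentially a one-line invocation of that lemma once the expansion is written down.
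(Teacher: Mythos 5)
Your proposal is correct and follows exactly the route the paper takes: the paper's "proof" of Lemma~\ref{lem:4.1} is simply the remark that the addition formula used in Lemma~\ref{lem:6.1} also holds for Euler polynomials (citing \cite[Eq.~24.4.13]{DLMF}), so the Bernoulli argument carries over verbatim. You have merely written out the details that the paper leaves implicit, and they check out.
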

Furthermore, since the reflection formulas in \eqref{2.4} are identical for
both the Bernoulli and Euler polynomials, the following lemma also carries
over from the Bernoulli case.

\begin{lemma}\label{lem:4.2}
For fixed integers $q\geq 1$ and $0\leq r<s$, we have
\begin{equation}\label{4.2}
e_k^{-}(q,r,s;\tfrac{q-r-s}{2}) = \begin{cases}
0 &\hbox{if $k$ is even},\\
2E_{2\mu+1}(\tfrac{q+r-s}{2q}) &\hbox{if}\;\;k=2\mu+1,
\end{cases}
\end{equation}
and
\begin{equation}\label{4.3}
e_k^{+}(q,r,s;\tfrac{q-r-s}{2}) = \begin{cases}
2E_{2\mu}(\tfrac{q+r-s}{2q}) &\hbox{if}\;\;k=2\mu,\\
0 &\hbox{if $k$ is odd}.
\end{cases}
\end{equation}
\end{lemma}

While in Theorem~\ref{thm:6.3} we only obtained a result in the ``$-$" case,
for Euler polynomials we get meaningful results in both cases.

\begin{theorem}\label{thm:4.3}
Let $q\geq 1$ and $0\leq r<s$ be fixed integers and set
$e_k^{\pm}:=e_k^{\pm}(q,r,s;x)$. Then for all integers $m\geq 0$ we have
$H_{2m}(e_k^{-})=0$ and
\begin{equation}\label{4.4}
H_{2m+1}(e_k^{-})=(-1)^{m+1}\left(\frac{s-r}{q}\right)^{2m+2}\prod_{\ell=1}^m
\left(\frac{\ell^2}{4}\left(\left(\frac{s-r}{q}\right)^2-(2\ell)^2\right)\right)^{2(m+1-\ell)}.
\end{equation}
Furthermore, we have
\begin{equation}\label{4.5}
H_{2m}(e_k^{+})=(-1)^{m}\frac{2^{2m+1}}{m!^2}\prod_{\ell=1}^m
\left(\frac{\ell^2}{4}\left(\left(\frac{s-r}{q}\right)^2-(2\ell-1)^2\right)\right)^{2(m+1-\ell)}
\end{equation}
and
\begin{equation}\label{4.6}
H_{2m+1}(e_k^{+})=\prod_{\ell=0}^m\frac{\ell!^4}{16^\ell}
\left(\left(\frac{s-r}{q}\right)^2-(2\ell+1)^2\right)^{2(m-\ell)+1}. 
\end{equation}
\end{theorem}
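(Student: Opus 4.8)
The plan is to mirror the proof of Theorem~\ref{thm:6.3}, using Lemmas~\ref{lem:4.1} and~\ref{lem:4.2} to reduce each Hankel determinant, via the checkerboard Lemma~\ref{lem:2.2}, to a product of smaller determinants that can be evaluated by Theorem~\ref{thm:2.4}. Throughout I fix $x=(q-r-s)/2$, which is permissible by Lemma~\ref{lem:4.1}, and I abbreviate $y=(s-r)/q$. The key compatibility to record first is that choosing $x=(r-s)/q$ in Theorem~\ref{thm:2.4} gives $(x+1)/2=(q+r-s)/2q$ and $x^2=y^2$, exactly matching the argument $\tfrac{q+r-s}{2q}$ produced by Lemma~\ref{lem:4.2}.

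For the ``$-$'' case the argument is essentially identical to that of Theorem~\ref{thm:6.3}. By \eqref{4.2} the matrix $(e^-_{i+j})$ vanishes on even $i+j$, so the second part of Lemma~\ref{lem:2.2} applies: \eqref{2.11} gives $H_{2m}(e^-_k)=0$, and \eqref{2.10} with $n=2m+2$ gives $H_{2m+1}(e^-_k)=(-1)^{m+1}H_m(e^-_{2\mu+1})^2$. Using \eqref{4.2} and \eqref{2.12} to extract the factor $2^{m+1}$, and then inserting \eqref{2.16} (the case $\nu=1$ of Theorem~\ref{thm:2.4}) with $x=(r-s)/q$, the squaring trivializes the sign and combines $2^{m+1}$ with $(x/2)^{m+1}$ into $\big((s-r)/q\big)^{2m+2}$, which yields \eqref{4.4}.

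The ``$+$'' case requires more care. Now \eqref{4.3} makes $(e^+_{i+j})$ vanish on odd $i+j$, so the first part of Lemma~\ref{lem:2.2}, equation \eqref{2.9}, applies. Splitting off the two even-index blocks, I obtain $H_{2m+1}(e^+_k)=H_m(e^+_{2\mu})\,H_m(e^+_{2\mu+2})$ and $H_{2m}(e^+_k)=H_m(e^+_{2\mu})\,H_{m-1}(e^+_{2\mu+2})$, the empty determinant being read as $1$ when $m=0$. By \eqref{4.3} and \eqref{2.12} each block is a power of $2$ times a Hankel determinant of $E_{2\mu}(\tfrac{q+r-s}{2q})$ or $E_{2\mu+2}(\tfrac{q+r-s}{2q})$, i.e.\ of the sequences $c^{(0)}_k$ and $c^{(2)}_k$ of Theorem~\ref{thm:2.4}; I then substitute \eqref{2.15} and \eqref{2.17}.

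The final, purely algebraic combination is the main obstacle. Three points must be checked. First, the signs $(-1)^{\binom{m+1}{2}}$ and $(-1)^{\binom{m}{2}}$ coming from the two blocks multiply to $+1$ in the odd case and to $(-1)^{m^2}=(-1)^m$ in the even case, as required by \eqref{4.6} and \eqref{4.5}. Second, the $y$-dependent factors must be re-indexed: in the odd case the factors $y^2-(2\ell-1)^2$ from $c^{(0)}$ and $y^2-(2\ell+1)^2$ from $c^{(2)}$ interleave and their exponents add to the exponent $2(m-\ell)+1$ appearing in \eqref{4.6}, while in the even case the two lists of $y$-factors instead coincide, producing the squared exponent $2(m+1-\ell)$ of \eqref{4.5}; simultaneously the leftover powers of $2$ reduce correctly, since in the odd case the $4^{-(m+1)}$ hidden in the prefactor $\big(\tfrac{x^2-1}{4}\big)^{m+1}$ of \eqref{2.17} cancels the $2^{2m+2}$ supplied by \eqref{2.12}, whereas in the even case the surviving $2^{2m+1}$ becomes the prefactor of \eqref{4.5}. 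Third, and least transparent, the remaining pure-number factor must be recognized as a superfactorial expression: the identity $\prod_{\ell=1}^m \ell!=\prod_{j=1}^m j^{\,m+1-j}$ converts $\big(\prod_{\ell=1}^m(\ell^2/4)^{m+1-\ell}\big)^2$ into $\prod_{\ell=0}^m \ell!^4/16^\ell$, giving \eqref{4.6}, and the same identity supplies the factor $1/m!^2$ needed for \eqref{4.5}. Once these three bookkeeping steps are verified, \eqref{4.5} and \eqref{4.6} follow.
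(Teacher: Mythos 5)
Your proposal is correct and follows essentially the same route as the paper: fix $x=(q-r-s)/2$ via Lemma~\ref{lem:4.1}, use Lemma~\ref{lem:4.2} to obtain checkerboard structure, apply Lemma~\ref{lem:2.2} together with \eqref{2.12}, and substitute \eqref{2.15}--\eqref{2.17} with $x=(r-s)/q$; your bookkeeping of signs, powers of $2$, and the superfactorial reindexing all checks out. The only (inconsequential) slip is in the odd ``$+$'' case, where both blocks have size $m+1$ and so both contribute $(-1)^{\binom{m+1}{2}}$ rather than $(-1)^{\binom{m+1}{2}}$ and $(-1)^{\binom{m}{2}}$, but the product is $+1$ either way.
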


\begin{proof}
Fix $x=(q-r-s)/2$. The proof for $e_k^{-}$ is similar to that of Theorem~\ref{thm:6.3}. 
We use again the second part of Lemma~\ref{lem:2.2}, this time with 
$M_{i,j}=e_{i+j}^{-}$; then \eqref{2.11} shows that $H_{2m}(e_k^{-})=0$.
To prove \eqref{4.4}, we begin by using \eqref{2.10} with $n=2m+2$, which gives
\begin{equation}\label{4.7}
\det_{0\leq i,j\leq 2m+1}\big(e_{i+j}^{-}\big)
= (-1)^{m+1}\left(\det_{0\leq i,j\leq m}\big(e_{2(i+j)+1}^{-}\big)\right)^2
= (-1)^{m+1}H_m(e_{2\mu+1}^{-})^2.
\end{equation}
With \eqref{4.2} and \eqref{2.12} we have
\begin{equation}\label{4.8}
H_m(e_{2\mu+1}^{-}) = 2^{m+1}H_m\big(E_{2\mu+1}(\tfrac{q+r-s}{2q})\big).
\end{equation}
Finally we use \eqref{2.16} with $x=(r-s)/q$. Then with \eqref{4.8} and
\eqref{4.7} we immediately get \eqref{4.4}.

For $e_k^{+}$ we use the first part of Lemma~\ref{lem:2.2} and distinguish
between two cases. First, when $n=2m+1$, then by \eqref{2.9} we have
\[
\det_{0\leq i,j\leq 2m}\big(e_{i+j}^{+}\big)
= \det_{0\leq i,j\leq m}\big(e_{2(i+j)}^{+}\big)
\cdot\det_{0\leq i,j\leq m-1}\big(e_{2(i+j)+2}^{+}\big),
\]
and with \eqref{4.3} and \eqref{2.12} we get
\begin{equation}\label{4.9}
H_{2m}(e_{k}^{+}) = 2^{2m+1}H_m\big(E_{2\mu}(\tfrac{q+r-s}{2q})\big)
\cdot H_{m-1}\big(E_{2\mu+2}(\tfrac{q+r-s}{2q})\big).
\end{equation}
Then we substitute \eqref{2.15} with $n=m$ and \eqref{2.17} with $n=m-1$ into
\eqref{4.9}, both with $x=(r-s)/q$. After some straightforward manipulations
we finally obtain \eqref{4.5}.

Lastly, to prove \eqref{4.6} we use \eqref{2.9} with $n=2m+2$. Then 
\[
\det_{0\leq i,j\leq 2m+1}\big(e_{i+j}^{+}\big)
= \det_{0\leq i,j\leq m}\big(e_{2(i+j)}^{+}\big)
\cdot\det_{0\leq i,j\leq m}\big(e_{2(i+j)+2}^{+}\big),
\]
and once again using \eqref{4.3} and \eqref{2.12} we get
\begin{equation}\label{4.10}
H_{2m+1}(e_{k}^{+}) = 4^{m+1}H_m\big(E_{2\mu}(\tfrac{q+r-s}{2q})\big)
\cdot H_{m}\big(E_{2\mu+2}(\tfrac{q+r-s}{2q})\big).
\end{equation}
We substitute \eqref{2.15} and \eqref{2.17} into \eqref{4.10}, both with $n=m$ 
and $x=(r-s)/q$. After some tedious but straightforward manipulations we get
\eqref{4.6}.
\end{proof}

As a first consequence of Theorem~\ref{thm:4.3} we consider a few more cases
of the generalized Bernoulli numbers and polynomials defined in \eqref{6.10}.
In particular, we will deal with certain Dirichlet characters modulo 8 and 12, 
as given in Table~2.

\bigskip
\begin{center}
{\renewcommand{\arraystretch}{1.2}
\begin{tabular}{|c|rrrr|}
\hline
$n$ & 1 & 3 & 5 & 7 \\
\hline
$\chi_{8,1}(n)$ & 1 & -1 & -1 & 1 \\
$\chi_{8,2}(n)$ & 1 & 1 & -1 & -1 \\
\hline
\end{tabular}}\quad
{\renewcommand{\arraystretch}{1.2}
\begin{tabular}{|c|rrrr|}
\hline
$n$ & 1 & 5 & 7 & 11 \\
\hline
$\chi_{12,1}(n)$ & 1 & -1 & -1 & 1 \\
$\chi_{12,2}(n)$ & 1 & 1 & -1 & -1 \\
\hline
\end{tabular}}

\medskip
{\bf Table~2}: Some characters modulo 8 and 12.
\end{center}
\bigskip

We note that both characters modulo 8 are primitive, and while $\chi_{12,1}$ is
also primitive, $\chi_{12,2}$ is induced from the character $\chi_3$ as defined
in Section~3 and therefore has conductor 3. However, this does not affect the
result that follows.

We begin with the character $\chi_{8,1}$. By \eqref{6.11}, \eqref{2.2}, and the
definition \eqref{4.1} we have
\begin{align*}
B_{n,\chi_{8,1}}(x) &= 8^{n-1}\left(B_n(\tfrac{x+1}{8})-B_n(\tfrac{x+3}{8})
-B_n(\tfrac{x+5}{8})+B_n(\tfrac{x+7}{8})\right) \\
&=8^{n-1}\frac{n}{2^n}\left(-E_{n-1}(\tfrac{x+1}{4})+E_{n-1}(\tfrac{x+3}{4})\right) \\
&= -\frac{n}{2}\cdot 4^{n-1}\cdot e_{n-1}^{-}(4,1,3;x).
\end{align*}
In the same way we can determine expressions for the remaining three cases. 
Upon setting $n=k+1$ we summarize the four cases as follows: For $q=4$ and 6
we have 
\begin{align}
b_k^{(1)}:=\frac{1}{k+1}B_{k+1,\chi_{2q,1}}(x) 
&= -\frac{1}{2}q^k e_k^{-}(q,1,q-1;x),\label{4.11}\\
b_k^{(2)}:=\frac{1}{k+1}B_{k+1,\chi_{2q,2}}(x) 
&= -\frac{1}{2}q^k e_k^{+}(q,1,q-1;x).\label
{4.12}
\end{align}
Applying the first part of Theorem~\ref{thm:4.3} to \eqref{4.11} and the second
part to \eqref{4.12} and using the identities \eqref{2.12} and \eqref{2.5}, we
get the following result.

\begin{corollary}\label{cor:4.4}
Let $q=4$ or $6$, set $\widetilde{q}:=(q-2)/q$, and let $b_k^{(1)}, b_k^{(2)}$
be as above. Then for all integers $m\geq 0$ we have 
$H_{2m}\big(b_k^{(1)}\big)=0$, and
\begin{align*}
H_{2m+1}\big(b_k^{(1)}\big)&=(-1)^{m+1}\left(\frac{q-2}{2}q^{2m}\right)^{2m+2}
\prod_{\ell=1}^m\left(\frac{\ell^2}{4}\left({\widetilde{q}}^2-(2\ell)^2\right)\right)^{2(m+1-\ell)},\\
H_{2m}\big(b_k^{(2)}\big) &=(-1)^{m+1}\frac{q^{2m(2m+1)}}{m!^2}
\prod_{\ell=1}^m\left(\frac{\ell^2}{4}\left(\widetilde{q}^2-(2\ell-1)^2\right)\right)^{2(m+1-\ell)}, \\
H_{2m+1}\big(b_k^{(2)}\big) &=\left(\frac{1}{2}q^{2m+1}\right)^{2m+2}
\prod_{\ell=0}^m\frac{\ell!^4}{16^\ell}
\left(\widetilde{q}^2-(2\ell+1)^2\right)^{2(m-\ell)+1}.
\end{align*}
\end{corollary}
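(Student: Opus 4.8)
The plan is to reduce all four evaluations to Theorem~\ref{thm:4.3} by peeling off the scalar factors recorded in \eqref{4.11} and \eqref{4.12}. Setting $r=1$ and $s=q-1$ in \eqref{4.1} gives $(s-r)/q=(q-2)/q=\widetilde q$, so the products appearing in \eqref{4.4}--\eqref{4.6} are already expressed in terms of $\widetilde q^2$ and will carry over verbatim; the only real work is to track the prefactors. The master step is as follows. From \eqref{4.11} we have $b_k^{(1)}=-\tfrac12 q^k e_k^{-}(q,1,q-1;x)$; applying \eqref{2.12} with $\lambda=-\tfrac12$ to the $(n+1)\times(n+1)$ Hankel matrix and then \eqref{2.5} with $x=q$, I would obtain
\[
H_n\big(b_k^{(1)}\big)=\big(-\tfrac12\big)^{n+1}q^{n(n+1)}H_n\big(e_k^{-}(q,1,q-1;x)\big),
\]
and the identical computation with $e_k^{+}$ in place of $e_k^{-}$ for $b_k^{(2)}$.

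For $b_k^{(1)}$ the even-index case is immediate: since $H_{2m}(e_k^{-})=0$ by Theorem~\ref{thm:4.3}, the reduction formula gives $H_{2m}\big(b_k^{(1)}\big)=0$. For $n=2m+1$ the exponent $n+1=2m+2$ is even, so $(-\tfrac12)^{2m+2}=2^{-(2m+2)}$ is positive and the sign is inherited from \eqref{4.4}, namely $(-1)^{m+1}$. Substituting \eqref{4.4} and combining the power of $q$ with $\widetilde q^{\,2m+2}=(q-2)^{2m+2}/q^{2m+2}$, I expect the exponent arithmetic
\[
q^{(2m+1)(2m+2)}\,\widetilde q^{\,2m+2}=q^{2m(2m+2)}(q-2)^{2m+2},
\]
which together with the factor $2^{-(2m+2)}$ collapses to $\big(\tfrac{q-2}{2}q^{2m}\big)^{2m+2}$, yielding the stated formula for $H_{2m+1}\big(b_k^{(1)}\big)$.

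For $b_k^{(2)}$ I would proceed the same way, feeding \eqref{4.5} and \eqref{4.6} into the analogous reduction $H_n\big(b_k^{(2)}\big)=(-\tfrac12)^{n+1}q^{n(n+1)}H_n(e_k^{+})$. The one point requiring care is the sign in the even case: for $n=2m$ the exponent $n+1=2m+1$ is odd, so $(-\tfrac12)^{2m+1}=-2^{-(2m+1)}$, and this minus sign combines with the $(-1)^m$ of \eqref{4.5} to produce the $(-1)^{m+1}$ in the corollary, while the factor $2^{-(2m+1)}$ cancels the $2^{2m+1}$ of \eqref{4.5}, leaving $q^{2m(2m+1)}/m!^2$ times the product. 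For $n=2m+1$ the prefactor $(-\tfrac12)^{2m+2}q^{(2m+1)(2m+2)}=\big(\tfrac12 q^{2m+1}\big)^{2m+2}$ is positive, matching the absence of a sign in $H_{2m+1}\big(b_k^{(2)}\big)$, and the product from \eqref{4.6} is carried over unchanged.

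There is no genuine obstacle here: the argument is purely bookkeeping. The only places to be vigilant are the parity of $n+1$, which decides whether the scalar $-\tfrac12$ contributes a sign, and the exponent arithmetic that merges $q^{n(n+1)}$ with the power of $\widetilde q$ hidden in Theorem~\ref{thm:4.3}. Getting the split into powers of $q$ versus $q-2$ correct in the $b_k^{(1)}$ prefactor is the most error-prone step, but it is entirely routine.
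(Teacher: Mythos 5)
Your proposal is correct and is exactly the paper's argument: the paper derives the corollary by applying Theorem~\ref{thm:4.3} to the relations \eqref{4.11} and \eqref{4.12} and peeling off the scalar $-\tfrac12 q^k$ via \eqref{2.12} and \eqref{2.5}, which is precisely your reduction $H_n\big(b_k^{(i)}\big)=\big(-\tfrac12\big)^{n+1}q^{n(n+1)}H_n\big(e_k^{\pm}\big)$. Your sign and exponent bookkeeping (in particular $(2m+1)(2m+2)-(2m+2)=2m(2m+2)$ for the $b_k^{(1)}$ prefactor) checks out.
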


As another consequence of Theorem~\ref{thm:4.3} we consider the alternating analogue
of Corollary~\ref{cor:6.7}. For integers $s\geq 1$ and $k\geq 0$ we denote
\begin{equation}\label{4.13}
T_k(s) := 1-2^k+3^k-\cdots+(-1)^{s-1}s^k.
\end{equation}
There is a well-known connection with Euler polynomials, given by
\[
T_k(s) = \frac{1}{2}\big(E_k(1)-(-1)^sE_k(s+1)\big);
\]
see, e.g., \cite[Eq.~24.4.10]{DLMF}. With \eqref{4.1} this means that
\[
T_k(s) = \begin{cases}
\tfrac{1}{2}e_k^{-}(1,0,s;1) &\hbox{if $s$ is even},\\
\tfrac{1}{2}e_k^{+}(1,0,s;1) &\hbox{if $s$ is odd}.
\end{cases}
\]
Using \eqref{2.12} and Theorem~\ref{thm:4.3}, we immediately get the following
identities.

\begin{corollary}\label{cor:4.5}
Let $T_k(s)$ be as defined in \eqref{4.13}. 

$(a)$ When $s=2t$ is even, then for all integers $m\geq 0$ we have 
$H_{2m}(T_k(2t))=0$ and
\[
H_{2m+1}(T_k(2t))=\left(-t^2\right)^{m+1}\prod_{\ell=1}^m
\big(\ell^2(t^2-\ell^2)\big)^{2(m+1-\ell)},
\]
and in particular, $H_{2m+1}(T_k(2t))=0$ for $m\geq t$.

$(b)$ When $s$ is odd, then for all $m\geq 0$ we have  
\begin{align*}
H_{2m}(T_k(s))&=\frac{(-1)^m}{m!^2}\prod_{\ell=1}^m
\left(\frac{\ell^2}{4}\big(s^2-(2\ell-1)^2\big)\right)^{2(m+1-\ell)},\\
H_{2m+1}(T_k(s))&=\frac{1}{4^{m+1}}\prod_{\ell=0}^m
\frac{\ell!^4}{16^\ell}\big(s^2-(2\ell+1)^2\big)^{2(m-\ell)+1},
\end{align*}
and these determinants become $0$ when $m\geq(s+1)/2$, resp.\ $m\geq(s-1)/2$.
\end{corollary}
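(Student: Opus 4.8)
The plan is to reduce Corollary~\ref{cor:4.5} directly to Theorem~\ref{thm:4.3} via the two displayed relations that express $T_k(s)$ in terms of $e_k^{\pm}(1,0,s;1)$, splitting according to the parity of $s$. In both cases the prefactor $\tfrac{1}{2}$ is handled by \eqref{2.12}: since each Hankel matrix is $N\times N$ with $N=2m+1$ or $N=2m+2$, multiplying the sequence by $\tfrac{1}{2}$ multiplies the determinant by $(\tfrac{1}{2})^{2m+2}$. The essential point is that Theorem~\ref{thm:4.3} gives the Hankel determinants of $e_k^{\pm}(q,r,s;x)$ with $q=1$, $r=0$, and the given $s$, so one only substitutes $q=1$, $(s-r)/q=s$ into \eqref{4.4}--\eqref{4.6} and multiplies by the appropriate power of $\tfrac12$.

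For part $(a)$, where $s=2t$ is even, I would start from $T_k(2t)=\tfrac12 e_k^{-}(1,0,2t;1)$. By \eqref{2.12} we have $H_n(T_k(2t))=(\tfrac12)^{n+1}H_n(e_k^{-}(1,0,2t;1))$. The vanishing $H_{2m}(T_k(2t))=0$ is immediate from $H_{2m}(e_k^{-})=0$ in Theorem~\ref{thm:4.3}. For the odd case, I would take $n=2m+1$ and substitute $q=1$, $s-r=2t$ into \eqref{4.4}; the factor $(\tfrac12)^{2m+2}$ combines with $(-1)^{m+1}(2t)^{2m+2}$ to give $(-t^2)^{m+1}$, and inside the product $\tfrac{\ell^2}{4}\big((2t)^2-(2\ell)^2\big)=\ell^2(t^2-\ell^2)$, which matches the claimed formula. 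The condition $H_{2m+1}(T_k(2t))=0$ for $m\geq t$ follows because the factor with $\ell=t$ vanishes once $t\leq m$.

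For part $(b)$, where $s$ is odd, I would use $T_k(s)=\tfrac12 e_k^{+}(1,0,s;1)$, so $H_n(T_k(s))=(\tfrac12)^{n+1}H_n(e_k^{+}(1,0,s;1))$. For the even-index determinant I would take $n=2m+1$ in this relation (so the scalar factor is $(\tfrac12)^{2m+1}$) and substitute $q=1$, $s-r=s$ into \eqref{4.5}; the $2^{2m+1}$ there cancels against $(\tfrac12)^{2m+1}$, leaving $\tfrac{(-1)^m}{m!^2}$ times the stated product. For the odd-index determinant I would take $n=2m+2$ (scalar factor $(\tfrac12)^{2m+2}=\tfrac{1}{4^{m+1}}$) and substitute into \eqref{4.6}, which already has the required product form, producing the $\tfrac{1}{4^{m+1}}$ prefactor directly. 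The vanishing claims follow by locating which factor in the product becomes zero: the factor $s^2-(2\ell-1)^2$ vanishes at $\ell=(s+1)/2$, giving $H_{2m}=0$ once $m\geq(s+1)/2$, and $s^2-(2\ell+1)^2$ vanishes at $\ell=(s-1)/2$, giving $H_{2m+1}=0$ once $m\geq(s-1)/2$.

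The only real work is bookkeeping: matching the power of $\tfrac12$ against the $2^{2m+1}$, $4^{m+1}$, and $(-1)^{\binom{n+1}{2}}$-type constants already absorbed into Theorem~\ref{thm:4.3}, and verifying the two algebraic simplifications $\tfrac{\ell^2}{4}\big((2t)^2-(2\ell)^2\big)=\ell^2(t^2-\ell^2)$ and $(-1)^{m+1}(2t)^{2m+2}(\tfrac12)^{2m+2}=(-t^2)^{m+1}$. I do not anticipate a genuine obstacle here, since every needed ingredient is established; the main place for care is ensuring the exponent on $\tfrac12$ is $n+1$ with the correct value of $n$ in each of the four subcases, because using the wrong parity of $n$ would misalign the cancellation of the constants.
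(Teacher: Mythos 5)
Your proposal is correct and follows exactly the paper's route: the paper likewise derives the corollary by writing $T_k(s)=\tfrac12 e_k^{\pm}(1,0,s;1)$ according to the parity of $s$ and then applying \eqref{2.12} together with Theorem~\ref{thm:4.3}. Your bookkeeping of the powers of $\tfrac12$ and the simplification $\tfrac{\ell^2}{4}\big((2t)^2-(2\ell)^2\big)=\ell^2(t^2-\ell^2)$ checks out in all four subcases.
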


This result shows again that under certain circumstances all Hankel determinants
from a certain index on can vanish. In this connection it would be easy to 
state an ``Euler analogue" to Corollary~\ref{cor:6.6}. We leave this to the
interested reader. 

\section{Derivative sequences}

If for a sequence $(c_0, c_1,\ldots)$ we know the Hankel determinant $H_n(c_k)$,
then by \eqref{2.5} and \eqref{2.12} we also know $H_n(a\cdot b^k\cdot c_k)$ for
any numbers or variables $a$ and $b$. However, this is generally not the case
for $H_n(k\cdot c_k)$ or $H_n((k+1)\cdot c_k)$, or other expressions of this
kind. It is the purpose of this section to present a method that allows us to
deal with such expressions in some special cases.

We recall that both the Bernoulli and Euler polynomial sequences are {\it Appell
sequences\/}, that is, they satisfy the derivative property
\begin{equation}\label{5.1}
B_n'(x) = nB_{n-1}(x),\qquad E_n'(x) = nE_{n-1}(x).
\end{equation}
These identities follow quite easily form the generating functions in 
\eqref{1.1} and \eqref{1.2}, or from the identities in \eqref{2.1}. This 
gives rise to the question whether Hankel determinants of sequences or 
subsequences of Bernoulli, Euler, or generally Appell polynomials might give
rise to Hankel determinants of their derivatives. In general, this would be
asking too much; however, under certain circumstances we can indeed pass from
a polynomial sequence to its derivative, as the following theorem shows. We 
will prove it later in this section.

\begin{theorem}\label{thm:5.1}
Let $A_k(x)$, $k\geq 0$, be a sequence of $C^1$ functions and let
$x_0\in{\mathbb C}$ be such that $A_k(x_0)=0$ for all $k\geq 0$. Then
\begin{equation}\label{5.2}
H_n\big(A_k'(x_0)\big) = A_0'(x_0)^{n+1}
\lim_{x\to x_0}\frac{H_n\big(A_k(x_0)\big)}{A_0(x_0)^{n+1}}.
\end{equation}
\end{theorem}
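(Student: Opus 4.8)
The plan is to use the local linear behaviour of the functions $A_k$ at $x_0$. I first observe that the quantity inside the limit in \eqref{5.2} must be read as $H_n\big(A_k(x)\big)/A_0(x)^{n+1}$, regarded as a function of the variable $x$ near $x_0$; since $A_k(x_0)=0$ for every $k$, this is a genuine $0/0$ indeterminate form, and the whole proof amounts to resolving it. The mechanism is that differentiability of $A_k$ at $x_0$ together with $A_k(x_0)=0$ says precisely that, writing $t=x-x_0$, the quotient $A_k(x_0+t)/t\to A_k'(x_0)$ as $t\to0$. Hence every entry of the Hankel matrix $\big(A_{i+j}(x_0+t)\big)_{0\le i,j\le n}$ is, to leading order, a multiple of $t$, and one expects the determinant to carry exactly one factor of $t$ from each of its $n+1$ rows, i.e.\ a factor $t^{n+1}$ which will cancel against the $t^{n+1}$ hidden in $A_0(x_0+t)^{n+1}$.

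To make this precise I would, for each $k$, introduce the function
\[
g_k(t):=\frac{A_k(x_0+t)}{t}\quad(t\neq0),\qquad g_k(0):=A_k'(x_0),
\]
which is continuous at $t=0$ exactly because $A_k(x_0)=0$ and $A_k$ is differentiable there. Then $A_{i+j}(x_0+t)=t\,g_{i+j}(t)$ for $t\neq0$, and factoring the common scalar $t$ out of each of the $n+1$ rows, by multilinearity of the determinant, gives
\[
H_n\big(A_k(x_0+t)\big)=\det_{0\leq i,j\leq n}\big(t\,g_{i+j}(t)\big)
=t^{\,n+1}\det_{0\leq i,j\leq n}\big(g_{i+j}(t)\big).
\]
Since likewise $A_0(x_0+t)^{n+1}=t^{\,n+1}g_0(t)^{n+1}$, the offending powers of $t$ cancel, and for all small $t\neq0$ one has the clean identity
\[
\frac{H_n\big(A_k(x_0+t)\big)}{A_0(x_0+t)^{n+1}}
=\frac{\det_{0\leq i,j\leq n}\big(g_{i+j}(t)\big)}{g_0(t)^{n+1}}.
\]

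The final step is to let $t\to0$. A determinant is a polynomial in its entries, hence a continuous function of them, and each entry $g_{i+j}(t)$ tends to $g_{i+j}(0)=A_{i+j}'(x_0)$; therefore the numerator tends to $\det_{0\leq i,j\leq n}\big(A_{i+j}'(x_0)\big)=H_n\big(A_k'(x_0)\big)$ and the denominator to $A_0'(x_0)^{n+1}$. Passing to the limit and clearing the denominator then yields \eqref{5.2}. I do not expect any serious obstacle here: the argument is multilinearity of the determinant followed by a continuity/limit interchange, both standard. The one point that genuinely requires care, rather than being routine bookkeeping, is the role of the hypothesis $A_0'(x_0)\neq0$: the cancellation of $t^{n+1}$ and the final division are legitimate only then, and if $A_0'(x_0)=0$ the right-hand side of \eqref{5.2} degenerates into an indeterminate product $0\cdot\infty$. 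I would therefore state explicitly that \eqref{5.2} is to be understood (and applied) under the mild assumption $A_0'(x_0)\neq0$, which holds automatically in all the Appell-polynomial applications of this section.
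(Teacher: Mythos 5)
Your proof is correct, but it takes a genuinely different route from the paper. The paper proves the result through its orthogonal-polynomial machinery: Lemma~5.7 shows that the monic orthogonal polynomials $P_n(y;x)$ attached to the sequence $(A_k(x))$ specialize, as $x\to x_0$, to the monic orthogonal polynomials of the derivative sequence $(A_k'(x_0))$; combined with the factorization $H_n({\bf c})=c_0^{n+1}t_1^n t_2^{n-1}\cdots t_n$ of \eqref{5.11}, this shows that $H_n/c_0^{n+1}$ depends only on the recurrence coefficients $t_\ell$, which are shared by the two sequences, and \eqref{5.2} follows. Your argument bypasses all of this: you factor $t=x-x_0$ out of each of the $n+1$ rows of the Hankel matrix (legitimate by multilinearity, since $A_{i+j}(x_0+t)=t\,g_{i+j}(t)$ with $g_k$ continuous at $0$ by differentiability), cancel the resulting $t^{n+1}$ against $A_0(x_0+t)^{n+1}=t^{n+1}g_0(t)^{n+1}$, and pass to the limit using continuity of the determinant in its entries. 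What your approach buys is elementarity and slightly greater generality: it needs no existence statement for the orthogonal polynomials, hence no implicit nonvanishing of the lower-order Hankel determinants of $(A_k(x))$ near $x_0$, and it isolates cleanly the one hypothesis that is actually needed, namely $A_0'(x_0)\neq 0$ (which the paper leaves tacit but which holds in every application in Section~5). What the paper's approach buys is coherence with the rest of the development: Lemma~5.7 --- that a sequence of functions vanishing at $x_0$ and its derivative sequence share the same orthogonal polynomials, hence the same coefficients $s_n,t_n$ --- is of independent interest and is the structural fact that is reused in Section~6, where the shifted-sequence determinants $d_n(x)$ of \eqref{6a.1} are combined with the explicit $s_n,t_n$ from \eqref{6a.5}. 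You also correctly read $H_n\big(A_k(x_0)\big)/A_0(x_0)^{n+1}$ in \eqref{5.2} as $H_n\big(A_k(x)\big)/A_0(x)^{n+1}$; as printed it is a typographical slip, since the expression must depend on $x$ for the limit to be meaningful.
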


For this result to be useful we need, above all, a sequence of functions whose
elements all have a root in common. But this is exactly the case for certain 
subsequences of Bernoulli and Euler polynomials, as one can see in \eqref{2.4a}
and \eqref{2.4b}. We use this fact in the following corollaries.

\begin{corollary}\label{cor:5.2}
For all $n\geq 0$ we have
\begin{equation}\label{5.3}
H_n\big((2k+1)E_{2k}\big) = 2^{2n(n+1)}\prod_{\ell=1}^n\ell!^4.
\end{equation}
\end{corollary}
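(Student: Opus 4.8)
\textbf{Proof proposal for Corollary~\ref{cor:5.2}.}

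The plan is to apply Theorem~\ref{thm:5.1} to a suitable subsequence of Euler polynomials, choosing the sequence and the common root $x_0$ so that the derivative $A_k'(x_0)$ reproduces the sequence $(2k+1)E_{2k}$ up to a harmless scalar, and so that the Hankel determinants $H_n(A_k(x))$ are already known from Section~2. The natural candidate is $A_k(x) := E_{2k+1}(x)$, since by the Appell property \eqref{5.1} we have $A_k'(x) = (2k+1)E_{2k}(x)$, and by the zeros \eqref{2.4b} we know $E_{2k+1}(\tfrac{1}{2}) = 0$ for all $k\geq 0$, so $x_0 = \tfrac12$ is a common root of the entire sequence. Evaluating the derivative at $x_0=\tfrac12$ gives $A_k'(\tfrac12) = (2k+1)E_{2k}(\tfrac12)$, and since $E_{2k} = 2^{2k}E_{2k}(\tfrac12)$ (the identity $E_n = 2^nE_n(\tfrac12)$ used in the proof of Corollary~\ref{cor:6.4}), this equals $2^{-2k}(2k+1)E_{2k}$. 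Thus $A_k'(\tfrac12)$ is the target sequence multiplied by $2^{-2k}$.

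Next I would assemble the right-hand side of \eqref{5.2}. The Hankel determinant $H_n(A_k(x)) = H_n\big(E_{2k+1}(x)\big)$ is precisely $H_n(c_k^{(1)})$ from Theorem~\ref{thm:2.4} with argument $x$ rewritten via $(x'+1)/2 = x$, i.e.\ $x' = 2x-1$, so that \eqref{2.16} gives an explicit product formula. I would substitute $x' = 2x-1$ into \eqref{2.16} and examine the behaviour as $x\to\tfrac12$, i.e.\ $x'\to 0$. The key observation is that \eqref{2.16} carries an overall factor $\big(\tfrac{x'}{2}\big)^{n+1}$, which forces $H_n(A_k(x))\to 0$ like $(x')^{n+1}$; meanwhile $A_0(x) = E_1(x) = x-\tfrac12 = \tfrac{x'}{2}$, so $A_0(x)^{n+1}$ vanishes at exactly the same rate. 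Hence the quotient $H_n(A_k(x))/A_0(x)^{n+1}$ has a finite nonzero limit, which is exactly what Theorem~\ref{thm:5.1} requires, and that limit is the remaining product $\prod_{\ell=1}^n\big(\tfrac{\ell^2}{4}((x')^2-(2\ell)^2)\big)^{n+1-\ell}$ evaluated at $x'=0$ (together with the sign $(-1)^{\binom{n+1}{2}}$ and the cancellation of the $2^{n+1}$ from $(x'/2)^{n+1}$ against $(x'/2)^{n+1}$ in the denominator). Finally $A_0'(\tfrac12) = E_0(\tfrac12)\cdot 1 = 1$, so the prefactor $A_0'(x_0)^{n+1}$ is trivial.

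Putting these pieces together gives an explicit evaluation of $H_n\big(A_k'(\tfrac12)\big) = H_n\big(2^{-2k}(2k+1)E_{2k}\big)$, and then a single application of \eqref{2.5} with $x = 4$ (to clear the factor $2^{-2k} = (1/4)^k$) converts this into $H_n\big((2k+1)E_{2k}\big)$, introducing a factor $4^{n(n+1)} = 2^{2n(n+1)}$. The main arithmetic obstacle will be the final bookkeeping of constants: carefully evaluating $\prod_{\ell=1}^n\big(\tfrac{\ell^2}{4}(-(2\ell)^2)\big)^{n+1-\ell} = \prod_{\ell=1}^n(-\ell^4)^{n+1-\ell}$, collapsing the resulting $\prod\ell^{4(n+1-\ell)}$ into the clean form $\prod_{\ell=1}^n\ell!^4$, and verifying that all the accumulated signs (the $(-1)^{\binom{n+1}{2}}$ from \eqref{2.16} and the $(-1)$'s from the $-(2\ell)^2$ factors) cancel to leave the positive right-hand side of \eqref{5.3}. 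I expect this sign-and-power reconciliation, rather than any conceptual difficulty, to be the delicate step; the structural argument via Theorem~\ref{thm:5.1} is otherwise immediate once $A_k(x) = E_{2k+1}(x)$ and $x_0 = \tfrac12$ are identified.
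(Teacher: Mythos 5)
Your proposal is correct and follows essentially the same route as the paper's proof: both apply Theorem~\ref{thm:5.1} to the odd-index Euler polynomials with common root at the argument $\tfrac12$, use $E_{2k}=2^{2k}E_{2k}(\tfrac12)$ together with \eqref{2.16}, and clear the resulting power of $2$ via \eqref{2.5}; the only difference is your parametrization $A_k(x)=E_{2k+1}(x)$, $x_0=\tfrac12$ versus the paper's $A_k(x)=E_{2k+1}(\tfrac{1+x}{2})$, $x_0=0$, which is an immaterial change of variable. The sign and constant bookkeeping you flag does work out exactly as you anticipate.
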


\begin{proof}
We set $A_k(x):=E_{2k+1}(\frac{1+x}{2})$. Then $A_k(0)=E_{2k+1}(\frac{1}{2})=0$
for all $k\geq 0$ and by \eqref{5.1}, 
\[
A_k'(0) = \frac{2k+1}{2}E_{2k}(\tfrac{1}{2}) = \frac{2k+1}{2^{2k+1}}E_{2k},
\]
where we have also used the right-hand identity in \eqref{2.1}. 
Now, by \eqref{2.12} and \eqref{2.5} we have
\[
H_n\big((2k+1)E_{2k}\big) = H_n\big(2^{2k+1}A_k'(0)\big) 
= 2^{n+1}4^{n(n+1)}H_n\big(A_k'(0)\big).
\]
Next, since $A_0(x)=E_1(\frac{1+x}{2})=\frac{x}{2}$ and $A_0'(0)=\frac{1}{2}$, 
we get with \eqref{2.16} and \eqref{5.2},
\begin{align*}
H_n\big((2k+1)E_{2k}\big) &= 2^{n+1}4^{n(n+1)}\left(\tfrac{1}{2}\right)^{n+1}
(-1)^{\binom{n+1}{2}}\prod_{\ell=1}^n\left(\frac{\ell^2}{4}\big(0-(2\ell)^2\big)\right)^{n+1-\ell} \\
&= 4^{n(n+1)}\prod_{\ell=1}^n\big(\ell^4\big)^{n+1-\ell},
\end{align*}
and this is easily seen to be equivalent to \eqref{5.3}.
\end{proof}

The identity \eqref{5.3} can also be obtained by two alternative means: First,
we can use \eqref{2.3} and \eqref{2.13} with $x=1/4$, again applying 
\eqref{2.12} and \eqref{2.5}. And second, Corollary~\ref{cor:6.4} shows that
$H_n\big(kE_{k-1}\big)$ is of ``checkerboard type"; this means that we can use 
Lemma~\ref{lem:2.2} combined with \eqref{6.8}, and easily obtain \eqref{5.3}
again.

As a second application of Theorem~\ref{thm:5.1} we follow along the same lines
as in the proof of Corollary~\ref{cor:5.2}.

\begin{corollary}\label{cor:5.3}
For all $n\geq 0$ we have
\begin{equation}\label{5.4}
H_n\big((2^{2k+2}-1)B_{2k+2}\big)=\frac{(n+1)!}{2^{n+1}}\prod_{\ell=1}^n\ell!^4.
\end{equation}
\end{corollary}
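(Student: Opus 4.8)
The plan is to mirror the proof of Corollary~\ref{cor:5.2} exactly, using Theorem~\ref{thm:5.1} applied to a suitable subsequence of Bernoulli polynomials whose elements share a common root. The natural choice, guided by \eqref{2.4a}, is to set $A_k(x):=B_{2k+3}(x)$, since $B_{2k+3}(0)=0$ for all $k\geq 0$. Then by the Appell property \eqref{5.1} we have $A_k'(0)=(2k+3)B_{2k+2}(0)=(2k+3)B_{2k+2}$. The appearance of the factor $(2^{2k+2}-1)$ in \eqref{5.4} suggests instead building it in through the second identity in \eqref{2.3}, namely $(n+1)E_n(1)=2(2^{n+1}-1)B_{n+1}$; equivalently I should arrange the target sequence $(2^{2k+2}-1)B_{2k+2}$ as a constant multiple of some $A_k'(x_0)$ so that Theorem~\ref{thm:5.1} converts it to a limit of a known Hankel determinant.

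First I would fix the right base sequence. Setting $A_k(x):=E_{2k+1}(\tfrac{1+x}{2})$ again will not produce Bernoulli numbers directly, so instead I would use the relation \eqref{2.3} in the form $(2k+2)E_{2k+1}(1)=2(2^{2k+2}-1)B_{2k+2}$ to write $(2^{2k+2}-1)B_{2k+2}=\tfrac{k+1}{2}E_{2k+1}(1)$. Thus the problem reduces to evaluating $H_n\big(\tfrac{k+1}{2}E_{2k+1}(1)\big)$. Now I would take $A_k(x):=E_{2k+1}(\tfrac{x}{2}+1)$ so that, by \eqref{2.4b}, $A_k(0)=E_{2k+1}(1)$ is \emph{not} zero, which fails the hypothesis of Theorem~\ref{thm:5.1}. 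The correct normalization instead uses a shift making the value vanish: by \eqref{2.4b}, $E_{2k+1}(\tfrac12)=0$, so I revert to $A_k(x):=E_{2k+1}(\tfrac{1+x}{2})$ as in Corollary~\ref{cor:5.2}, giving $A_k'(0)=\tfrac{2k+1}{2^{2k+1}}E_{2k}$. To reach Bernoulli numbers I would instead combine the two identities in \eqref{2.3}: the first, $(2k+1)E_{2k}=2^{4k+2}B_{2k+1}(\tfrac34)$, connects the derivative values $A_k'(0)$ to $B_{2k+1}(\tfrac34)$, and then Theorem~\ref{thm:2.3} with the appropriate value of $x$ (here $x=\tfrac12$, since $\tfrac{x+1}{2}=\tfrac34$) evaluates the relevant Hankel determinant in closed form.

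The key steps in order are therefore: (i) choose $A_k(x)=B_{2k+3}(\tfrac{1+x}{2})$, whose common root at $x=0$ follows from $B_{2k+3}(\tfrac12)=0$ in \eqref{2.4a}, and compute $A_k'(0)=\tfrac{2k+3}{2}B_{2k+2}(\tfrac12)$; (ii) use the multiplication/scaling laws \eqref{2.5} and \eqref{2.12} to strip constants and powers of $2$, converting $H_n\big((2^{2k+2}-1)B_{2k+2}\big)$ into a constant multiple of $H_n(A_k'(0))$, exploiting the identity $B_{2k+2}(\tfrac12)=-(1-2^{1-2k-2})B_{2k+2}$ to introduce the factor $2^{2k+2}-1$; (iii) apply Theorem~\ref{thm:5.1} to replace $H_n(A_k'(0))$ by $A_0'(0)^{n+1}\lim_{x\to 0}H_n(A_k(\cdot))/A_0(\cdot)^{n+1}$; (iv) evaluate the limiting Hankel determinant via Theorem~\ref{thm:2.3}, taking the limit $x\to 0$ inside the product in \eqref{2.13} and cancelling the common factor $(x/2)^{n+1}$ against $A_0^{n+1}$; and (v) simplify the resulting product, tracking the factors $(2\ell+1)(2\ell-1)$, to reach the clean form $\tfrac{(n+1)!}{2^{n+1}}\prod_{\ell=1}^n \ell!^4$.

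The main obstacle I expect is step (ii) together with the bookkeeping in step (v): correctly identifying which half-integer argument of the Bernoulli polynomial produces the factor $2^{2k+2}-1$ and then threading the constants through \eqref{2.5}, \eqref{2.12}, and the limit so that the numerical prefactors in Theorem~\ref{thm:2.3} telescope into $(n+1)!$. In particular the product $\prod_{\ell=1}^n\tfrac{\ell^4(x^2-\ell^2)}{4(2\ell+1)(2\ell-1)}$ from \eqref{2.13}, evaluated at the appropriate $x$ and in the limit $x\to 0$, must be shown to collapse to $\prod_{\ell=1}^n \ell^4$ up to powers of $2$ and a single factorial factor; verifying that the odd-number denominators $(2\ell+1)(2\ell-1)$ exactly cancel against factors arising from the $(2k+3)$ weights and the $2^{2k+2}-1$ terms is the delicate computation. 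Everything else follows the template of Corollary~\ref{cor:5.2} essentially verbatim.
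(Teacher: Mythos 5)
Your proposal circles the correct idea at one point --- you correctly rewrite the target as $(2^{2k+2}-1)B_{2k+2}=(k+1)E_{2k+1}(1)$ via \eqref{2.3} --- but then abandons it, and the plan you actually commit to in steps (i)--(v) does not prove the statement. Two things go wrong. First, with $A_k(x)=B_{2k+3}(\tfrac{1+x}{2})$ and $x_0=0$ you get $A_k'(0)=\tfrac{2k+3}{2}B_{2k+2}(\tfrac12)=-\tfrac{2k+3}{2}\cdot\tfrac{2^{2k+1}-1}{2^{2k+1}}B_{2k+2}$. The exponential factor that appears is $2^{2k+1}-1$, not $2^{2k+2}-1$, and more fatally the sequence carries the extra weight $(2k+3)$, which is \emph{not} of the form $a\cdot b^k$ and therefore cannot be stripped off by \eqref{2.5} and \eqref{2.12}. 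The whole point of the derivative method in Section~5 is that such $k$-dependent linear weights change the Hankel determinant in an essential way, so your step (ii) would leave you computing the Hankel determinant of a genuinely different sequence (essentially the one treated in Corollary~\ref{cor:6a.3}/Corollary~\ref{cor:5.4} territory, not \eqref{5.4}). Second, step (iv) invokes Theorem~\ref{thm:2.3} for $H_n\big(B_{2k+3}(\tfrac{1+x}{2})\big)$, but that theorem only covers the unshifted sequence $B_{2k+1}(\tfrac{x+1}{2})$; handling the shift by one requires the additional machinery of Lemma~\ref{lem:6a.1} and Lemma~\ref{lem:6a.2}, which you do not include.

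The fix is the branch you discarded: take $A_k(x):=E_{2k+2}(\tfrac{1+x}{2})$ with $x_0=1$. Then $A_k(1)=E_{2k+2}(1)=0$ for all $k\ge 0$ by \eqref{2.4b}, and the derivative is exactly $A_k'(1)=(k+1)E_{2k+1}(1)=(2^{2k+2}-1)B_{2k+2}$ with no extraneous $k$-dependent factors, so Theorem~\ref{thm:5.1} applies directly with $A_0'(1)=3B_2=\tfrac12$ and $A_0(x)=\tfrac{x^2-1}{4}$; the limiting Hankel determinant is then supplied by \eqref{2.17}, not Theorem~\ref{thm:2.3}. When you rejected the Euler route you tested the wrong sequence ($E_{2k+1}$ at $1$, which indeed does not vanish) instead of $E_{2k+2}$ at $1$, which does.
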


\begin{proof}
Here we set $A_k(x):=E_{2k+2}(\frac{1+x}{2})$ and $x_0=1$. Then for all 
$k\geq 0$ we have $A_k(1)=E_{2k+2}(1)=0$ and also
\[
A_k'(1) = (k+1)E_{2k+1}(1) = \big(2^{2k+2}-1\big)B_{2k+2},
\]
where we have used the second identity in \eqref{2.3}. We also have
\[
A_0'(1) = 3B_2=\frac{1}{2},\qquad A_0(x)=E_2(\tfrac{1+x}{2})=\frac{x^2-1}{4}.
\]
Substituting everything, including \eqref{2.17}, into \eqref{5.2}, we get
\begin{align*}
H_n\big((2^{2k+2}-1)B_{2k+2}\big)
&= \left(\tfrac{1}{2}\right)^{n+1}(-1)^{\binom{n+1}{2}}
\prod_{\ell=1}^n\left(\frac{\ell^2}{4}\big(1-(2\ell+1)^2\big)\right)^{n+1-\ell} \\
&= \frac{1}{2^{n+1}}\prod_{\ell=1}^n\big(\ell^3(\ell+1)\big)^{n+1-\ell}.
\end{align*}
Finally, a straightforward manipulation shows that this is equivalent to 
\eqref{5.4}.
\end{proof}

\begin{corollary}\label{cor:5.4}
For all $n\geq 0$ we have
\begin{equation}\label{5.5}
H_n\big((2k+1)B_{2k}(\tfrac{1}{2})\big)
= \prod_{\ell=1}^n\frac{\ell!^8}{(2\ell)!(2\ell+1)!}.
\end{equation}
\end{corollary}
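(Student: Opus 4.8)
The plan is to follow the derivative method of Corollaries~\ref{cor:5.2} and~\ref{cor:5.3}, this time taking as the underlying Appell sequence the odd-index Bernoulli polynomials, which vanish at $\tfrac12$ by \eqref{2.4a}. Concretely, I would set $A_k(x):=B_{2k+1}(\tfrac{1+x}{2})$ and $x_0=0$. Then \eqref{2.4a} gives $A_k(0)=B_{2k+1}(\tfrac12)=0$ for every $k\geq0$, so the common-root hypothesis of Theorem~\ref{thm:5.1} holds, and the chain rule together with the Appell property \eqref{5.1} yields $A_k'(0)=\tfrac12(2k+1)B_{2k}(\tfrac12)$. Hence the target sequence is $2A_k'(0)$, and by \eqref{2.12} we have $H_n\big((2k+1)B_{2k}(\tfrac12)\big)=2^{n+1}H_n\big(A_k'(0)\big)$.

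Next I would feed this into Theorem~\ref{thm:5.1}. The crucial observation is that the Hankel determinant $H_n(A_k(x))$ is precisely the one evaluated in Theorem~\ref{thm:2.3}, since $A_k(x)=B_{2k+1}(\tfrac{x+1}{2})$. Moreover $A_0(x)=B_1(\tfrac{1+x}{2})=\tfrac{x}{2}$, so $A_0'(0)=\tfrac12$, and the factor $(\tfrac{x}{2})^{n+1}$ appearing in \eqref{2.13} is exactly $A_0(x)^{n+1}$. Therefore this vanishing factor cancels in the quotient of \eqref{5.2}, the limit as $x\to0$ is finite, and setting $x=0$ inside the product of \eqref{2.13} leaves $\prod_{\ell=1}^n\big(\tfrac{-\ell^6}{4(2\ell-1)(2\ell+1)}\big)^{n+1-\ell}$. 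Combining with $A_0'(0)^{n+1}=2^{-(n+1)}$ and the overall $2^{n+1}$ makes the powers of $2$ cancel; and since $\sum_{\ell=1}^n(n+1-\ell)=\binom{n+1}{2}$, the explicit sign $(-1)^{\binom{n+1}{2}}$ of \eqref{2.13} is cancelled by the sign collected from the factors $(-1)^{n+1-\ell}$, leaving $H_n\big((2k+1)B_{2k}(\tfrac12)\big)=\prod_{\ell=1}^n\big(\tfrac{\ell^6}{4(2\ell-1)(2\ell+1)}\big)^{n+1-\ell}$.

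The remaining task is to show this agrees with the stated closed form \eqref{5.5}. Here I would invoke the reindexing identity $\prod_{\ell=1}^n a_\ell^{\,n+1-\ell}=\prod_{m=1}^n\prod_{\ell=1}^m a_\ell$, valid because each $a_\ell$ occurs once for each $m\in\{\ell,\dots,n\}$. Applying it with $a_\ell=\tfrac{\ell^6}{4(2\ell-1)(2\ell+1)}$ reduces the problem to the inner product $\prod_{\ell=1}^m a_\ell$, which I would evaluate directly from $\prod_{\ell=1}^m(2\ell-1)=\tfrac{(2m)!}{2^m m!}$ and $\prod_{\ell=1}^m(2\ell+1)=\tfrac{(2m+1)!}{2^m m!}$; the powers of $4$ and $2$ then cancel and one obtains $\prod_{\ell=1}^m a_\ell=\tfrac{m!^8}{(2m)!(2m+1)!}$. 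Substituting back gives $\prod_{m=1}^n\tfrac{m!^8}{(2m)!(2m+1)!}$, which is exactly \eqref{5.5}.

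I expect the only real obstacle to be bookkeeping rather than ideas: one must check carefully that the vanishing factor $A_0(x)^{n+1}$ cancels cleanly, so that the limit in \eqref{5.2} exists and is nonzero, and that both the powers of $2$ and the two separate sources of the sign $(-1)^{\binom{n+1}{2}}$ cancel exactly. The final product simplification is then routine once the reindexing identity is applied, mirroring the factorial rearrangements already carried out in Corollaries~\ref{cor:5.2} and~\ref{cor:5.3}.
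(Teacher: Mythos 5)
Your proposal is correct and follows essentially the same route as the paper's own proof: the same choice $A_k(x)=B_{2k+1}(\tfrac{1+x}{2})$ with $x_0=0$, the same application of Theorem~\ref{thm:5.1} together with Theorem~\ref{thm:2.3}, and the same cancellation of the powers of $2$ and of the sign $(-1)^{\binom{n+1}{2}}$, arriving at $\prod_{\ell=1}^n\big(\tfrac{\ell^6}{4(2\ell-1)(2\ell+1)}\big)^{n+1-\ell}$. Your explicit reindexing and factorial computation simply carries out the ``straightforward manipulation'' that the paper leaves to the reader, and it checks out.
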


\begin{proof}
We take $A_k(x):=B_{2k+1}(\frac{1+x}{2})$ and $x_0=0$. Then 
$A_k(0)=B_{2k+1}(\frac{1}{2})=0$ for all $k\geq 0$. Furthermore,
\[
A_k'(0)=\frac{2k+1}{2}B_{2k}(\tfrac{1}{2}),\qquad
A_0'(0)=\frac{1}{2}B_0(\tfrac{1}{2})=\frac{1}{2},\qquad
A_0(x)=B_1(\tfrac{1+x}{2})=\frac{x}{2}.
\]
With \eqref{2.12} we now get
\[
H_n\big((2k+1)B_{2k}(\tfrac{1}{2})\big) = H_n\big(2A_k'(0)\big)
= 2^{n+1}H_n\big(A_k'(0)\big),
\]
and then with \eqref{5.2} and \eqref{2.13},
\begin{align*}
H_n\big((2k+1)B_{2k}(\tfrac{1}{2})\big) &= 2^{n+1}\left(\tfrac{1}{2}\right)^{n+1}
(-1)^{\binom{n+1}{2}}\prod_{\ell=1}^{n}
\left(\frac{\ell^{4}(0-\ell^{2})}{4(2\ell+1)(2\ell-1)}\right)^{n+1-\ell}\\
&= \prod_{\ell=1}^{n}\left(\frac{\ell^6}{4(2\ell+1)(2\ell-1)}\right)^{n+1-\ell}.
\end{align*}
Once again, a straightforward manipulation shows that this is equivalent to
\eqref{5.5}.
\end{proof}

To prove Theorem~\ref{thm:5.1} and to derive some further consequences, we need
some basics from the classical theory of orthogonal polynomials. Suppose we are
given a sequence ${\bf c}=(c_0, c_1, \ldots)$; then under certain conditions
there exists a positive Borel measure $\mu$ on ${\mathbb R}$ with infinite 
support such that 
\begin{equation}\label{5.6} 
c_k = \int_{\mathbb R}y^kd\mu(y),\qquad k = 0, 1, 2, \ldots
\end{equation}
We summarize several well-known facts and state them as a lemma, with a few
consequences; see, e.g., \cite[Ch.~2]{Is}, or \cite[Sect.~3]{DJ} for a somewhat
extended summary.

\begin{lemma}\label{lem:5.5}
If $\mu$ is the measure in \eqref{5.6}, there exists a unique sequence of monic
polynomials $P_n(y)$ of degree $n$, $n=0, 1, \ldots$, and a sequence of positive
numbers $(\zeta_n)_{n\geq 0}$, with $\zeta_0=1$, such that 
\begin{equation}\label{5.7}
\int_{\mathbb R}P_m(y)P_n(y)d\mu(y) = \zeta_n\delta_{m,n},
\end{equation}
where $\delta_{m,n}$ is the Kronecker delta function. Furthermore, for all 
$n\geq 1$ we have $\zeta_n=H_n({\bf c})/H_{n-1}({\bf c})$, and for $n\geq 0$,
\begin{equation}\label{5.8}
P_n(y) = \frac{1}{H_{n-1}({\bf c})}\det
\begin{pmatrix}
c_{0} & c_{1} & \cdots & c_{n}\\
c_{1} & c_{2} & \cdots & c_{n+1}\\
\vdots & \vdots & \ddots & \vdots\\
c_{n-1} & c_{n} & \cdots & c_{2n-1} \\
1 & y & \cdots & y^n
\end{pmatrix},
\end{equation}
where the polynomials $P_n(y)$ satisfy the $3$-term recurrence relation 
$P_0(y)=1$, $P_1(y)=y+s_0$, and
\begin{equation}\label{5.9}
P_{n+1}(y) = (y+s_n)P_n(y) - t_nP_{n-1}(y)\qquad (n\geq 1),
\end{equation}
for some sequences $(s_n)_{n\geq 0}$ and $(t_n)_{n\geq 1}$.
\end{lemma}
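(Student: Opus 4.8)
The plan is to realize everything through the inner product $\langle f,g\rangle := \int_{\mathbb R} f(y)g(y)\,d\mu(y)$ on the space of polynomials, under which $\langle y^i,y^j\rangle = c_{i+j}$ by \eqref{5.6}. Because $\mu$ has infinite support, no nonzero polynomial $P$ can satisfy $\int_{\mathbb R} P^2\,d\mu=0$, so this inner product is positive definite. Its Gram matrix in the basis $1,y,\dots,y^n$ is precisely the Hankel matrix $(c_{i+j})_{0\le i,j\le n}$, and positive definiteness forces $H_n(\mathbf c)>0$ for every $n$; this is the fact that makes all subsequent denominators nonzero and all $\zeta_n$ positive.

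First I would establish existence and uniqueness of the $P_n$ by Gram--Schmidt applied to $1,y,y^2,\dots$. Requiring each $P_n$ to be monic of degree $n$ fixes it uniquely as $y^n$ minus its orthogonal projection onto $\operatorname{span}\{1,\dots,y^{n-1}\}=\operatorname{span}\{P_0,\dots,P_{n-1}\}$, and positive definiteness guarantees the process never stalls. The orthogonality \eqref{5.7} is then immediate, with $\zeta_n := \langle P_n,P_n\rangle>0$.

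Next I would verify the determinant formula \eqref{5.8}. Expanding the displayed determinant along its last row shows it is a polynomial in $y$ whose coefficient of $y^n$ is $H_{n-1}(\mathbf c)/H_{n-1}(\mathbf c)=1$, so the candidate is monic of degree $n$. To see it is orthogonal to $y^m$ for $0\le m\le n-1$, I multiply by $y^m$ and integrate termwise: this replaces the bottom row $(1,y,\dots,y^n)$ by $(c_m,c_{m+1},\dots,c_{m+n})$, which duplicates one of the upper rows, so the determinant vanishes. By the uniqueness just proved, the right-hand side of \eqref{5.8} equals $P_n$. The same manipulation with $m=n$ gives the norm: since $P_n$ is monic and orthogonal to lower degrees, $\zeta_n=\langle P_n,P_n\rangle=\langle P_n,y^n\rangle$, and replacing the bottom row by $(c_n,\dots,c_{2n})$ turns the determinant in \eqref{5.8} into $H_n(\mathbf c)$, whence $\zeta_n=H_n(\mathbf c)/H_{n-1}(\mathbf c)$ for $n\ge1$ (with $\zeta_0=\langle 1,1\rangle=c_0$, normalized to $1$).

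Finally, the three-term recurrence \eqref{5.9} follows from the self-adjointness of multiplication by $y$. Since $yP_n$ is monic of degree $n+1$, expanding it in the orthogonal basis gives $yP_n=P_{n+1}+\sum_{k=0}^n \alpha_k P_k$ with $\alpha_k=\langle yP_n,P_k\rangle/\zeta_k=\langle P_n,yP_k\rangle/\zeta_k$. As $yP_k$ has degree $k+1$, the coefficient $\alpha_k$ vanishes whenever $k+1<n$, leaving only $k=n$ and $k=n-1$; setting $s_n=-\alpha_n$ and $t_n=\alpha_{n-1}$ yields \eqref{5.9}, and pairing the recurrence with $P_{n-1}$ identifies $t_n=\zeta_n/\zeta_{n-1}>0$. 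I expect the only genuinely delicate point to be the positive-definiteness step, since everything else is formal linear algebra once $H_n(\mathbf c)\neq0$ is in hand; this is exactly where the hypothesis that $\mu$ has infinite support is indispensable.
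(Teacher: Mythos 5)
Your proof is correct and complete; it is the standard textbook argument (positive definiteness from the infinite support of $\mu$, Gram--Schmidt, row-replacement for \eqref{5.8} and for $\zeta_n=H_n({\bf c})/H_{n-1}({\bf c})$, and self-adjointness of multiplication by $y$ for \eqref{5.9}). The paper itself gives no proof of this lemma --- it is stated as a summary of well-known facts with a citation to Ismail's book and to \cite{DJ} --- and your argument is exactly the one found in those references, so there is nothing to compare beyond noting that your aside about $\zeta_0=\langle 1,1\rangle=c_0$ needing the normalization $c_0=1$ is a legitimate (minor) quibble with the lemma as stated.
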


We continue with a couple of important consequences, summarized as a second
lemma.

\begin{lemma}\label{lem:5.6}
With the sequence $(c_k)$ and the polynomials $P_n(y)$ as in 
Lemma~\ref{lem:5.5}, we have for $0\leq r\leq n-1$
\begin{equation}\label{5.10}
y^rP_n(y)\bigg|_{y^k=c_k} = 0.
\end{equation}
Furthermore, with the sequence $(t_n)$ as in \eqref{5.9}, we have 
\begin{equation}\label{5.11}
H_n({\bf c}) = c_0^{n+1}t_1^nt_2^{n-1}\cdots t_{n-1}^2t_n\qquad (n\geq 0).
\end{equation}
\end{lemma}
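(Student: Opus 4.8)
The statement of Lemma~\ref{lem:5.6} contains two assertions, \eqref{5.10} and \eqref{5.11}, and I would prove them in that order, since the second builds naturally on the orthogonality already established in Lemma~\ref{lem:5.5}. The notation $y^rP_n(y)|_{y^k=c_k}$ means that after expanding the polynomial $y^rP_n(y)$ in powers of $y$, each monomial $y^k$ is replaced by the moment $c_k$; by \eqref{5.6} this replacement is exactly integration against $\mu$, so that $y^rP_n(y)|_{y^k=c_k} = \int_{\mathbb R}y^rP_n(y)\,d\mu(y)$. The plan for \eqref{5.10} is therefore to interpret the functional $\,\cdot\,|_{y^k=c_k}$ as the moment functional and then invoke orthogonality directly: since $y^r$ has degree $r\leq n-1$, it lies in the span of $P_0,\ldots,P_{n-1}$, each of which is orthogonal to $P_n$ under \eqref{5.7}. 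Concretely, I would write $y^r=\sum_{i=0}^r\alpha_iP_i(y)$ for suitable constants $\alpha_i$ (possible because the $P_i$ are monic of exact degree $i$, hence a basis of polynomials of degree $\leq r$), and then compute $\int y^rP_n\,d\mu=\sum_{i=0}^r\alpha_i\int P_iP_n\,d\mu=\sum_{i=0}^r\alpha_i\zeta_n\delta_{i,n}=0$, the last equality holding because $i\leq r\leq n-1<n$.

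For \eqref{5.11} the plan is to telescope the ratio formula $\zeta_n=H_n(\mathbf c)/H_{n-1}(\mathbf c)$ from Lemma~\ref{lem:5.5} after first identifying $\zeta_n$ with $t_n$. The key intermediate fact I would establish is that $t_n=\zeta_n/\zeta_{n-1}$ for $n\geq 1$. This is a standard consequence of the three-term recurrence \eqref{5.9}: multiplying \eqref{5.9} by $P_{n-1}(y)$ and integrating against $\mu$, the term $(y+s_n)P_n P_{n-1}$ contributes $\int yP_nP_{n-1}\,d\mu$ (the $s_n$ piece vanishes by orthogonality of $P_n$ and $P_{n-1}$), while the left side $\int P_{n+1}P_{n-1}\,d\mu=0$ again by orthogonality. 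This yields $t_n\zeta_{n-1}=\int yP_nP_{n-1}\,d\mu$. The remaining step is to recognize that $\int yP_nP_{n-1}\,d\mu=\int P_n\cdot yP_{n-1}\,d\mu=\int P_n\cdot\big(P_n+(\text{lower degree})\big)\,d\mu=\zeta_n$, using that $yP_{n-1}$ is monic of degree $n$, so its $P_n$-component is exactly $1$ and all lower-degree components are annihilated against $P_n$. Combining gives $t_n=\zeta_n/\zeta_{n-1}$.

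Once $t_n=\zeta_n/\zeta_{n-1}$ is in hand, \eqref{5.11} follows by a clean telescoping argument. Using $\zeta_n=H_n(\mathbf c)/H_{n-1}(\mathbf c)$ I would form the product
\[
t_1^nt_2^{n-1}\cdots t_{n-1}^2t_n=\prod_{j=1}^n t_j^{\,n+1-j}
=\prod_{j=1}^n\left(\frac{\zeta_j}{\zeta_{j-1}}\right)^{n+1-j},
\]
and then verify by reorganizing the exponents (each $\zeta_j$ for $1\leq j\leq n-1$ appears with net exponent $+1$, while $\zeta_n$ appears once and $\zeta_0$ is absorbed via $\zeta_0=1$) that this collapses to $\zeta_1\zeta_2\cdots\zeta_n=H_n(\mathbf c)/H_0(\mathbf c)=H_n(\mathbf c)/c_0$. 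Multiplying through by $c_0^{n+1}$ and noting $H_0(\mathbf c)=c_0$ then yields exactly $c_0^{n+1}t_1^n\cdots t_n=c_0^{n}\cdot\zeta_1\cdots\zeta_n\cdot c_0=c_0^{n}H_n(\mathbf c)$, so I would double-check the bookkeeping to confirm it lands on $H_n(\mathbf c)$ as claimed.

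I expect the main obstacle to be purely the exponent bookkeeping in the telescoping step of \eqref{5.11}: getting the power of $c_0$ and the overall index shifts exactly right, since off-by-one errors in the product $\prod t_j^{\,n+1-j}$ are easy to make. The orthogonality arguments for \eqref{5.10} and for the identification $t_n=\zeta_n/\zeta_{n-1}$ are standard and should present no real difficulty; they rest entirely on \eqref{5.7} and the degree structure of the monic family $P_n$.
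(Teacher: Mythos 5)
The paper does not actually prove Lemma~5.6; it is quoted as a standard consequence of the classical theory, with references to \cite[Ch.~2]{Is} and \cite[Sect.~3]{DJ}. So your self-contained argument is welcome, and its two main ingredients are the right ones: interpreting $\,\cdot\,|_{y^k=c_k}$ as integration against $\mu$ and expanding $y^r$ in the monic basis gives \eqref{5.10} immediately, and the identification $t_n=\zeta_n/\zeta_{n-1}$ obtained by pairing the three-term recurrence with $P_{n-1}$ is the standard route to \eqref{5.11}.

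There is, however, one genuine loose end, and you half-noticed it yourself: your telescoping lands on $c_0^{\,n}H_n(\mathbf c)$ rather than $H_n(\mathbf c)$, and ``double-checking the bookkeeping'' will not make the stray factor disappear as long as you keep $\zeta_0=1$. The culprit is that normalization: by definition $\zeta_0=\int P_0(y)^2\,d\mu(y)=\int d\mu(y)=c_0$, which equals $1$ only when the zeroth moment is $1$ (the statement $\zeta_0=1$ in Lemma~5.5 is only consistent in that normalized case, whereas \eqref{5.11} is asserted, and later used in the proof of Theorem~5.1, for general $c_0=A_0(x)$). Equivalently, the relation $\zeta_j=H_j/H_{j-1}$ extends to $j=0$ with the convention $H_{-1}=1$, giving $\zeta_0=H_0=c_0$. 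With $\zeta_0=c_0$ your own computation closes up correctly: the exponent of $\zeta_0$ in $\prod_{j=1}^n(\zeta_j/\zeta_{j-1})^{n+1-j}$ is $-n$, so the product equals $\zeta_1\cdots\zeta_n\cdot c_0^{-n}=\bigl(H_n/c_0\bigr)c_0^{-n}=H_n/c_0^{\,n+1}$, and multiplying by $c_0^{\,n+1}$ yields exactly \eqref{5.11}. Everything else in your argument --- the orthogonality proof of \eqref{5.10}, the derivation of $t_n\zeta_{n-1}=\int yP_nP_{n-1}\,d\mu=\zeta_n$ from the monicity of $yP_{n-1}$ --- is correct as written.
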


There is also an interesting and important connection with certain continued
fractions ($J$-fractions in this case). However, this will not be needed here;
it can be found in various relevant publication, for instance, in
\cite[p.~20]{Kr}, or \cite[Sect.~3]{DJ}.

We are now ready to prove Theorem~\ref{thm:5.1}. Given a sequence of $C^1$ 
functions $A_k(x)$, by Lemma~\ref{lem:5.6} there is a sequence $P_n(y;x)$ of
monic orthogonal polynomials satisfying
\begin{equation}\label{5.12}
y^rP_n(y;x)\bigg|_{y^k=A_k(x)} = 0\qquad (0\leq r\leq n-1).
\end{equation}
This polynomial sequence $P_n(y;x)$ is sometimes called the
{\it monic orthogonal polynomials with respect to} $A_k(x)$.
We begin by proving the following key property.

\begin{lemma}\label{lem:5.7}
Let $A_k(x)$ be a sequence of $C^1$ functions, and let
$P_n(y;x)$ be the corresponding monic orthogonal polynomials. If $A_k(x_0)=0$
for some $x_0\in{\mathbb C}$ and for all $k\geq 0$, then $P_n(y;x_0)$ are the 
monic orthogonal polynomials with respect to the sequence $A_k'(x_0)$.
\end{lemma}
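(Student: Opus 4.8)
The plan is to argue directly from the explicit determinant formula \eqref{5.8}, viewing $x_0$ as a removable singularity of the map $x\mapsto P_n(y;x)$. Writing $P_n(y;x)=\sum_{j=0}^n p_{n,j}(x)y^j$ with $p_{n,n}(x)\equiv1$, Lemma~\ref{lem:5.5} applied to the moment sequence $A_k(x)$ gives
\begin{equation*}
P_n(y;x)=\frac{1}{H_{n-1}(A_k(x))}\det
\begin{pmatrix}
A_0(x) & A_1(x) & \cdots & A_n(x)\\
\vdots & \vdots & \ddots & \vdots\\
A_{n-1}(x) & A_n(x) & \cdots & A_{2n-1}(x)\\
1 & y & \cdots & y^n
\end{pmatrix}
\end{equation*}
for every $x$ with $H_{n-1}(A_k(x))\neq0$. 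Since $A_k(x_0)=0$ for all $k$, each moment row vanishes at $x_0$, so numerator and denominator both vanish there; the real content of the lemma is that this $0/0$ has the predicted limit.

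First I would strip off the common vanishing by multilinearity. The top $n$ rows of the numerator consist entirely of moments, so I can factor $(x-x_0)$ from each of them, replacing $A_k(x)$ by the difference quotient $A_k(x)/(x-x_0)$ and producing an overall factor $(x-x_0)^n$; the row $(1,y,\dots,y^n)$ is untouched. Performing the identical factoring on the $n\times n$ determinant $H_{n-1}(A_k(x))=\det_{0\le i,j\le n-1}(A_{i+j}(x))$ produces the same factor $(x-x_0)^n$ in the denominator. These cancel, so for $x\neq x_0$ the polynomial $P_n(y;x)$ equals the quotient of the two determinants whose entries are the difference quotients $A_k(x)/(x-x_0)$, with the bottom row still $(1,y,\dots,y^n)$.

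Next I would let $x\to x_0$. Because each $A_k$ is $C^1$ with $A_k(x_0)=0$, every difference quotient converges, $A_k(x)/(x-x_0)\to A_k'(x_0)$. Hence the denominator tends to $H_{n-1}\big(A_k'(x_0)\big)$ and the numerator to the determinant obtained by replacing each $A_k$ with $A_k'(x_0)$. Matching the result against \eqref{5.8} for the sequence $A_k'(x_0)$ identifies the limit as the monic orthogonal polynomial of degree $n$ belonging to $A_k'(x_0)$; interpreting $P_n(y;x_0)$ as this limit, consistent with the limit already used in Theorem~\ref{thm:5.1}, proves the lemma.

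The step needing the most care is the passage to the limit, which is legitimate precisely when the limiting denominator $H_{n-1}\big(A_k'(x_0)\big)$ is nonzero. This is not an extra assumption: by Lemma~\ref{lem:5.5} that same nonvanishing is exactly what is needed for the monic orthogonal polynomials with respect to $A_k'(x_0)$ to exist at all, so under the standing nondegeneracy the cancelled quotient is genuinely continuous at $x_0$ and the argument closes. An alternative is to differentiate the orthogonality relations $\sum_{j}p_{n,j}(x)A_{r+j}(x)=0$ coming from \eqref{5.12} and set $x=x_0$, where the terms involving $A_{r+j}(x_0)=0$ drop out and one is left with $\sum_j p_{n,j}(x_0)A_{r+j}'(x_0)=0$ for $0\le r\le n-1$; this is slicker but presupposes that the coefficients $p_{n,j}$ extend in a $C^1$ fashion through the degenerate point $x_0$, which is exactly the fact that the determinant computation above supplies.
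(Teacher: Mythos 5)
Your argument is correct, but it takes a genuinely different route from the paper. The paper works directly with the orthogonality relations: writing $P_n(y;x)=\sum_j\alpha_{n,j}(x)y^j$, it takes the identity $\sum_j\alpha_{n,j}(x)A_{j+r}(x)=0$ from \eqref{5.12}, subtracts the (trivially zero) quantity $\sum_j\alpha_{n,j}(x)A_{j+r}(x_0)$, divides by $x-x_0$, and lets $x\to x_0$ to obtain $\sum_j\alpha_{n,j}(x_0)A_{j+r}'(x_0)=0$ --- essentially the ``slicker alternative'' you sketch in your final paragraph. Your main argument instead manipulates the determinant representation \eqref{5.8}, factoring $(x-x_0)$ out of each moment row of both the numerator and the denominator $H_{n-1}(A_k(x))$ and cancelling $(x-x_0)^n$ against $(x-x_0)^n$ before passing to the limit. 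The trade-off is instructive: the paper's route is shorter but quietly presupposes that the coefficients $\alpha_{n,j}(x)$ are continuous at the degenerate point $x_0$ (where \eqref{5.8} is formally $0/0$), a point it does not justify; your row-factoring computation supplies exactly that justification, at the cost of invoking the explicit determinant formula and making the nondegeneracy hypothesis $H_{n-1}\big(A_k'(x_0)\big)\neq 0$ visible --- which, as you correctly observe, is not an extra assumption but precisely the condition under which the conclusion of the lemma is even meaningful. You are also right that continuity (not $C^1$ extension) of the coefficients is all that is actually needed for the limiting step.
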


\begin{proof}
If we write
\[
P_n(y;x) = \sum_{j=0}^n\alpha_{n,j}(x)y^j,
\]
then by \eqref{5.12} we have for $0\leq r\leq n-1$,
\begin{equation}\label{5.13}
\sum_{j=0}^n\alpha_{n,j}(x)A_{j+r}(x) = 0.
\end{equation}
Since $A_k(x_0)=0$ for all $k\geq 0$, we have
\[
\sum_{j=0}^n\alpha_{n,j}(x)A_{j+r}(x_0) = 0.
\]
Subtracting this from \eqref{5.13} and dividing by $x-x_0$, we get
\[
\sum_{j=0}^n\alpha_{n,j}(x)\frac{A_{j+r}(x)-A_{j+r}(x_0)}{x-x_0} = 0.
\]
Finally, taking the limit as $x\to x_0$, we get
\[
\sum_{j=0}^n\alpha_{n,j}(x_0)A_{j+r}'(x_0) = 0;
\]
this, with \eqref{5.12}, proves the lemma.
\end{proof}

\begin{proof}[Proof of Theorem~\ref{thm:5.1}]
By Lemma~\ref{lem:5.7}, the sequences $A_k(x_0)$ and $A_k'(x_0)$ share the same
monic orthogonal polynomial. This means, in particular, that the terms 
$t_1, t_2, \ldots$ in \eqref{5.9} are the same, and therefore, by \eqref{5.11}
we have
\[
\lim_{x\to x_0}\frac{H_n\big(A_k(x_0)\big)}{A_0(x_0)^{n+1}}
=\frac{H_n\big(A_k'(x_0)\big)}{A_0'(x_0)^{n+1}}.
\]
This immediately leads to \eqref{5.2}, and the proof is complete.
\end{proof}

\section{Shifted sequences}

In the previous section we used the facts that 
$E_{2k+1}(\tfrac{1}{2})=E_{2k+2}(1)=B_{2k+1}(\tfrac{1}{2})$$=0$ for all $k\geq 0$
to obtain the identities \eqref{5.3}--\eqref{5.5}, respectively. We did that by
applying Theorem~\ref{thm:5.1} and using Theorems~\ref{thm:2.3} 
and~\ref{thm:2.4}. Apart from equivalent forms, there is one more sequence with
a common root we have not yet exploited, namely $B_{2k+1}(0)=0$. The problem
here is that this holds only for $k\geq 1$ since $B_1(x)=x+\tfrac{1}{2}$. 
Therefore
we cannot simply combine Theorem~\ref{thm:2.3} with Theorem~\ref{thm:5.1}, as
we did in the proof of Corollary~\ref{5.4}.

One possibility would be to consider $B_{2k+3}(x)$, which does indeed vanish 
for $x=0$ and for all $k\geq 0$. But we still have the problem that there is
no analogue of Theorem~\ref{thm:2.3} for the shifted sequence 
$B_{2k+3}(\tfrac{1+x}{2})$; however, this can be resolved as follows.

Given a sequence ${\bf c}=(c_0, c_1,\ldots)$, let $P_n(y)$, $n=0,1,\ldots$, be
the monic polynomials orthogonal with respect to ${\bf c}$, as in 
Lemmas~\ref{lem:5.5} and~\ref{lem:5.6}. With the coefficients $s_n$ and $t_n$
as in \eqref{5.9}, for all $n\geq 0$ we consider the determinant
\begin{equation}\label{6a.1}
d_{n}:=\det\begin{pmatrix}
-s_{0} & 1 & 0 & \cdots & 0\\
t_{1} & -s_{1} & 1 & \cdots & 0\\
0 & t_{2} & -s_{2} & \cdots & 0\\
\vdots & \vdots &  \ddots & \ddots & \vdots\\
0 & \cdots & 0 & t_{n} & -s_{n}
\end{pmatrix};
\end{equation}
thus, in particular, $d_0=-s_0$. These determinants play an important role in
connecting the Hankel determinant of a given sequence with that of a shifted
sequence.

\begin{lemma}[{\cite[Prop.~1.2]{MWY}}]\label{lem:6a.1}
With notations as above, we have, for a given sequence $(c_0, c_1,\ldots)$,
\begin{equation}\label{6a.2}
H_{n}(c_{k+1}) = d_{n}\cdot H_{n}(c_{k}).
\end{equation}
\end{lemma}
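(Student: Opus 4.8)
The plan is to conjugate both Hankel matrices by the (unit lower-triangular) coefficient matrix of the monic orthogonal polynomials $P_m$, and to recognize the conjugated shifted matrix as the determinant of a tridiagonal matrix that factors through the matrix appearing in \eqref{6a.1}. Let $\mathcal L$ be the linear functional with $\mathcal L[y^k]=c_k$, so that the substitution notation $\,\cdot\,|_{y^k=c_k}$ of Lemma~\ref{lem:5.6} is just $\mathcal L[\,\cdot\,]$, and write $P_m(y)=\sum_{j=0}^m\alpha_{m,j}y^j$ with $\alpha_{m,m}=1$. Set $A=(\alpha_{m,j})_{0\le m,j\le n}$; this is unit lower-triangular, hence $\det A=1$. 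Writing $H^{(0)}=(c_{i+j})$ and $H^{(1)}=(c_{i+j+1})$ for the two $(n+1)\times(n+1)$ Hankel matrices, so that $\det H^{(0)}=H_n(c_k)$ and $\det H^{(1)}=H_n(c_{k+1})$, a direct computation using orthogonality \eqref{5.7} gives
\[
\big(AH^{(0)}A^{T}\big)_{m,m'}=\mathcal L\big[P_m(y)P_{m'}(y)\big]=\zeta_m\,\delta_{m,m'},
\]
i.e.\ $AH^{(0)}A^{T}=Z$, where $Z:=\op{diag}(\zeta_0,\ldots,\zeta_n)$; taking determinants, $H_n(c_k)=\det Z$.

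First I would apply the same conjugation to $H^{(1)}$, whose entries become
\[
\big(AH^{(1)}A^{T}\big)_{m,m'}=\mathcal L\big[y\,P_m(y)P_{m'}(y)\big].
\]
Here I would insert the three-term recurrence \eqref{5.9}, rewritten as $y\,P_m(y)=P_{m+1}(y)-s_mP_m(y)+t_mP_{m-1}(y)$. Using orthogonality once more, each entry collapses to at most three terms, so that $AH^{(1)}A^{T}$ equals the tridiagonal matrix $\widetilde J$ with diagonal $\widetilde J_{m,m}=-s_m\zeta_m$, superdiagonal $\widetilde J_{m,m+1}=\zeta_{m+1}$, and subdiagonal $\widetilde J_{m,m-1}=t_m\zeta_{m-1}$.

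The key step is then to observe that $\widetilde J$ factors as $\widetilde J=M_nZ$, where $M_n$ is exactly the tridiagonal matrix defining $d_n$ in \eqref{6a.1} (entries $-s_m$ on the diagonal, $1$ on the superdiagonal, $t_m$ on the subdiagonal): multiplying the $m'$-th column of $M_n$ by $\zeta_{m'}$ reproduces each of the three entries above. Taking determinants and using $\det A=1$ yields
\[
H_n(c_{k+1})=\det\widetilde J=\det M_n\cdot\det Z=d_n\,H_n(c_k),
\]
which is \eqref{6a.2}.

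The main point requiring care is the existence of the orthogonal polynomials and of the recurrence coefficients $s_m,t_m$: the construction of $A$, of $P_m$, and hence of $d_n$, presupposes that the leading Hankel determinants $H_0(c_k),\ldots,H_{n-1}(c_k)$ are nonzero, which is the standing hypothesis of the orthogonal-polynomial setting in Lemmas~\ref{lem:5.5} and~\ref{lem:5.6}. On that locus the argument is exact; since $H_n(c_{k+1})$ and $H_n(c_k)$ are polynomials in the $c_k$ while $s_m,t_m$ (and therefore $d_n$) are rational functions of them, the identity \eqref{6a.2} extends to every sequence for which $d_n$ is defined. The only genuinely computational part, namely verifying the tridiagonal shape of $\widetilde J$ and its factorization $\widetilde J=M_nZ$, is routine bookkeeping with the recurrence \eqref{5.9}.
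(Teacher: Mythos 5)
Your argument is correct. The paper itself gives no proof of this lemma --- it is quoted verbatim from \cite[Prop.~1.2]{MWY} --- so there is nothing in the text to compare against line by line; what you have supplied is a self-contained proof of the cited result. Your route is the standard one: conjugating $H^{(0)}=(c_{i+j})$ by the unit lower-triangular coefficient matrix $A$ of the monic orthogonal polynomials diagonalizes it to $Z=\op{diag}(\zeta_0,\dots,\zeta_n)$, while the same conjugation turns the shifted matrix $H^{(1)}=(c_{i+j+1})$ into the Jacobi-type tridiagonal matrix $\widetilde J$ with entries $\mathcal L[yP_mP_{m'}]$, and the three-term recurrence \eqref{5.9} identifies $\widetilde J$ with $M_nZ$, where $M_n$ is precisely the matrix in \eqref{6a.1}; taking determinants gives \eqref{6a.2}. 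This is essentially the mechanism behind the proof in \cite{MWY} as well (there phrased in terms of the production/Jacobi matrix of a Catalan-like array), so you have not so much found a different proof as reconstructed the intended one in the orthogonal-polynomial language the paper already sets up in Lemmas~\ref{lem:5.5} and~\ref{lem:5.6}. Two small points of hygiene, neither of which damages the argument: the paper's normalization $\zeta_0=1$ presumes $c_0=1$, whereas your computation gives $\zeta_0=c_0$ and $\det Z=\prod_{m=0}^n\zeta_m=H_n(c_k)$ by telescoping, which is the correct general statement; and your closing remark about extending \eqref{6a.2} off the locus where all leading minors are nonzero is the right thing to say, since $d_n$ is only defined through $s_m,t_m$, which are rational in the moments, so the identity holds wherever both sides make sense.
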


We now consider the special case $c_k=B_{2k+1}(\tfrac{1+x}{2})$. Then the
coefficients $s_n$, $t_n$ will be functions of $x$ and therefore $d_n=d_n(x)$
will also be a function of $x$. In fact, in \cite[Theorem~4.1]{DJ} we showed 
that
\begin{equation}\label{6a.5}
s_{n}=\binom{n+1}{2}-\frac{x^{2}-1}{4},\qquad
t_{n}=\frac{n^{4}(n^2-x^{2})}{4(2n+1)(2n-1)}.
\end{equation}
We can now state and prove the following result.

\begin{lemma}\label{lem:6a.2}
Consider the sequence $c_k=B_{2k+1}(\tfrac{1+x}{2})$ and let $d_n(x)$ be 
defined as in \eqref{6a.1}, which depends on $x$ due to \eqref{6a.5}.
Then for all $n\geq 2$ we have
\begin{equation}\label{6a.3}
\lim_{x\to -1}\frac{d_{n}(x)}{x^{2}-1} 
= \frac{(-1)^{n}2^{n-2}}{3^{n}}\prod_{\ell=2}^n
\left(\frac{(\ell+1)^{2}(2\ell-1)}{\ell(\ell-1)(2\ell+1)}\right)^{n+1-\ell}.
\end{equation}
\end{lemma}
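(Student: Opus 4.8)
The plan is to avoid the shifted Hankel determinant $H_n(c_{k+1})$ altogether (we have no direct formula for it, and Lemma~\ref{lem:6a.1} would only relate it back to $d_n$) and instead work with the tridiagonal determinant \eqref{6a.1} directly, exploiting its three-term recurrence. First I would expand $d_n$ along its last column; since the matrix in \eqref{6a.1} is tridiagonal, this produces
\[
d_n = -s_n d_{n-1} - t_n d_{n-2}\qquad(n\ge 2),
\]
with $d_0=-s_0$ and $d_1=s_0s_1-t_1$. By \eqref{6a.5} we have $s_0=-\tfrac{x^2-1}{4}$ and $t_1=-\tfrac{x^2-1}{12}$, so both $d_0$ and $d_1$ are divisible by $x^2-1$; since $s_n,t_n$ are regular at $x=-1$, the recurrence then shows by induction that every $d_n$ vanishes at $x=-1$. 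Hence $L_n:=\lim_{x\to-1}d_n(x)/(x^2-1)$ exists for all $n$, and a short direct computation of $d_0,d_1$ gives the base values $L_0=\tfrac14$ and $L_1=-\tfrac16$.

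Next I would divide the recurrence by $x^2-1$ and let $x\to-1$. Because each quotient $d_m/(x^2-1)$ has a finite limit and $s_n,t_n$ are continuous at $x=-1$, the limit passes through termwise and yields a clean recurrence for the $L_n$, namely
\[
L_n = -\tfrac{n(n+1)}{2}\,L_{n-1} - \frac{n^4(n^2-1)}{4(2n-1)(2n+1)}\,L_{n-2}\qquad(n\ge 2),
\]
where I have inserted $s_n(-1)=\binom{n+1}{2}$ and $t_n(-1)=\tfrac{n^4(n^2-1)}{4(2n-1)(2n+1)}$ from \eqref{6a.5}. Thus $L_n$ is pinned down by a second-order linear recurrence together with the two initial values above.

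It then remains to check that the right-hand side $R_n$ of \eqref{6a.3} satisfies the very same recurrence and initial conditions, after which induction closes the argument. Writing $w_\ell=\tfrac{(\ell+1)^2(2\ell-1)}{\ell(\ell-1)(2\ell+1)}$, the exponents $n+1-\ell$ telescope to give $R_n/R_{n-1}=-\tfrac23\prod_{\ell=2}^n w_\ell$, and the product itself telescopes (the $\tfrac{2\ell-1}{2\ell+1}$ part directly, the remaining rational part as a factorial ratio) to $R_n/R_{n-1}=-\tfrac{n(n+1)^2}{2(2n+1)}$. Feeding $R_{n-2}/R_{n-1}=-\tfrac{2(2n-1)}{(n-1)n^2}$ into the claimed recurrence reduces everything to the single rational identity
\[
-\frac{n(n+1)}{2} + \frac{n^4(n^2-1)}{4(2n-1)(2n+1)}\cdot\frac{2(2n-1)}{(n-1)n^2} = -\frac{n(n+1)^2}{2(2n+1)}.
\]
Checking $R_0=\tfrac14$, $R_1=-\tfrac16$ and this identity is entirely routine; the algebra in the last identity, though elementary, is the only place demanding care and is where I expect the bookkeeping to be heaviest. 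Since $L_n$ and $R_n$ obey the same second-order recurrence and agree at $n=0,1$, they coincide for all $n$, which is precisely \eqref{6a.3}.
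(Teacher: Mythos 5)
Your proposal is correct and follows essentially the same route as the paper: both rest on the three-term recurrence $d_{n+1}(x)=-s_{n+1}d_n(x)-t_{n+1}d_{n-1}(x)$ obtained from the tridiagonal determinant \eqref{6a.1}, followed by dividing by $x^2-1$, passing to the limit, and closing by induction. The only (cosmetic) differences are that you anchor the induction at $n=0,1$ with $L_0=\tfrac14$, $L_1=-\tfrac16$ rather than at $n=2,3$, and you package the inductive step as a verification that the right-hand side satisfies the same recurrence via the ratio $R_n/R_{n-1}=-\tfrac{n(n+1)^2}{2(2n+1)}$, which checks out.
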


\begin{proof} Using elementary determinant operations, we get from 
\eqref{6a.1} the recurrence relation 
\begin{equation}\label{6a.4}
d_{n+1}(x)=-s_{n+1}d_{n}(x)-t_{n+1}d_{n-1}(x).
\end{equation}
We prove \eqref{6a.3} by induction on $n$. By direct computation, using 
\eqref{6a.5} and \eqref{6a.1}, we obtain
\begin{align*}
\lim_{x\to -1}\frac{d_{2}(x)}{x^{2}-1} &=\frac{3}{10}
=\frac{(-1)^{2}2^{2-2}}{3^{2}}\cdot\frac{(2+1)^{2}(4-1)}{2(2-1)(4+1)},\\
\lim_{x\to -1}\frac{d_{3}(x)}{x^{2}-1} &=-\frac{36}{35}
=\frac{(-1)^{3}2^{3-2}}{3^{3}}\left(\frac{(2+1)^2(4-1)}{2(2-1)(4+1)}\right)^2
\frac{(3+1)^{2}(6-1)}{3(3-1)(6+1)},
\end{align*}
which is the induction beginning. Suppose now that \eqref{6a.3} is true up to
some $n$. Then we divide both sides of \eqref{6a.4} by $x^2-1$ and use the
induction hypothesis \eqref{6a.3} along with \eqref{6a.5}. After some
straightforward but tedious manipulations we obtain an expression for 
$\lim_{x\to -1}d_{n+1}(x)/(x^2-1)$, which is the same as the right-hand side of
\eqref{6a.3}, but with $n$ replaced by $n+1$. This completes the proof by
induction.
\end{proof}

We are now ready to prove the desired fourth consequence of 
Theorem~\ref{thm:5.1}.

\begin{corollary}\label{cor:6a.3}
For all $n\geq 0$ we have
\begin{equation}\label{6a.6}
H_n\big((2k+3)B_{2k+2}\big) = \frac{1}{2^{n+1}}\prod_{\ell=1}^n
\left(\frac{\ell^{3}(\ell+1)^{3}}{4(2\ell+1)^{2}}\right)^{n+1-\ell}.
\end{equation}
\end{corollary}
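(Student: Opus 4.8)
The goal is to compute $H_n\big((2k+3)B_{2k+2}\big)$, and the natural strategy is to combine the derivative machinery of Theorem~\ref{thm:5.1} with the shift machinery of Lemma~\ref{lem:6a.1}, using the common root $B_{2k+3}(0)=0$ that was flagged as the one remaining unexploited vanishing subsequence. The plan is to set $A_k(x):=B_{2k+3}(\tfrac{1+x}{2})$, so that $A_k(-1)=B_{2k+3}(0)=0$ for all $k\geq 0$ by \eqref{2.4a}. By the Appell property \eqref{5.1}, the derivative is $A_k'(x)=\tfrac{2k+3}{2}B_{2k+2}(\tfrac{1+x}{2})$, and evaluating at $x=-1$ gives $A_k'(-1)=\tfrac{2k+3}{2}B_{2k+2}$. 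Thus $(2k+3)B_{2k+2}=2A_k'(-1)$, and by \eqref{2.12} we have $H_n\big((2k+3)B_{2k+2}\big)=2^{n+1}H_n\big(A_k'(-1)\big)$, reducing the problem to computing $H_n\big(A_k'(-1)\big)$ via Theorem~\ref{thm:5.1}.

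The obstacle that was emphasized in the surrounding text is that Theorem~\ref{thm:5.1} requires knowing $H_n\big(A_k(x)\big)=H_n\big(B_{2k+3}(\tfrac{1+x}{2})\big)$ as a function of $x$, and there is \emph{no} direct analogue of Theorem~\ref{thm:2.3} for this twice-shifted sequence. This is precisely where Lemma~\ref{lem:6a.1} enters: writing $c_k:=B_{2k+1}(\tfrac{1+x}{2})$, we have $A_k(x)=c_{k+1}$, so \eqref{6a.2} gives $H_n\big(A_k(x)\big)=H_n(c_{k+1})=d_n(x)\cdot H_n(c_k)$, where $H_n(c_k)$ is exactly the quantity evaluated in Theorem~\ref{thm:2.3} and $d_n(x)$ is the determinant \eqref{6a.1} built from the recurrence coefficients \eqref{6a.5}. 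The next step is therefore to apply \eqref{5.2} with $x_0=-1$, which requires the ratio $H_n\big(A_k(x)\big)/A_0(x)^{n+1}$ as $x\to-1$. Here $A_0(x)=B_3(\tfrac{1+x}{2})$; since $B_3(y)=y^3-\tfrac32 y^2+\tfrac12 y=\tfrac12 y(2y-1)(y-1)$, one computes $A_0(x)=B_3(\tfrac{1+x}{2})=\tfrac{x(x^2-1)}{16}$, which vanishes to first order at $x=-1$, matching the first-order vanishing of $d_n(x)$ captured by Lemma~\ref{lem:6a.2}. I also need $A_0'(-1)=3B_2=\tfrac12$.

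The computation then assembles as follows. By \eqref{5.2},
\[
H_n\big(A_k'(-1)\big)=A_0'(-1)^{n+1}\lim_{x\to-1}\frac{H_n\big(A_k(x)\big)}{A_0(x)^{n+1}}
=\Big(\tfrac12\Big)^{n+1}\lim_{x\to-1}\frac{d_n(x)\,H_n(c_k)}{A_0(x)^{n+1}}.
\]
Because both $d_n(x)$ and $A_0(x)$ carry exactly one factor of $(x^2-1)$ (the relevant factor of $A_0(x)$ near $x=-1$), I would write $A_0(x)^{n+1}=\big(\tfrac{x}{16}\big)^{n+1}(x^2-1)^{n+1}$ and split the limit as
\[
\lim_{x\to-1}\frac{d_n(x)}{x^2-1}\cdot\lim_{x\to-1}\frac{H_n(c_k)}{\big(\tfrac{x}{16}\big)^{n+1}(x^2-1)^{n}},
\]
so that the factor $(x^2-1)^{n+1}$ in the denominator is matched by one factor from $d_n(x)$ (governed by Lemma~\ref{lem:6a.2}) and $n$ factors coming from the $\big(\tfrac{x}{2}\big)^{n+1}\prod_\ell(x^2-\ell^2)^{n+1-\ell}$ structure of $H_n(c_k)=H_n(b_k)$ in \eqref{2.13} (each $\ell=1$ term $(x^2-1)^{n}$ supplies the needed vanishing).

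The main labor—and the step I expect to be the principal obstacle—is the bookkeeping in this final limit: one must substitute the explicit Lemma~\ref{lem:6a.2} formula for $\lim_{x\to-1}d_n(x)/(x^2-1)$ together with \eqref{2.13} evaluated near $x=-1$, cancel the two telescoping products (the $d_n$ product runs over $\ell=2,\dots,n$ while the Hankel product runs over $\ell=1,\dots,n$), and verify that all the powers of $2$, $3$, and the factors $(2\ell\pm1)$ collapse to the clean right-hand side of \eqref{6a.6}. I would carry this out by isolating the $\ell=1$ contribution of \eqref{2.13} separately (to absorb the $(x^2-1)^n$), then matching the residual $\prod_{\ell=2}^n$ products term by term against Lemma~\ref{lem:6a.2}; after reindexing and simplifying the powers of $2$ and the $(2\ell+1)(2\ell-1)$ factors, the identity \eqref{6a.6} should follow, with the overall prefactor $2^{n+1}\cdot(\tfrac12)^{n+1}\cdot(\text{from }d_n\text{ and }A_0)$ reducing to the stated $1/2^{n+1}$.
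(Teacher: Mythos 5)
Your strategy is exactly the one the paper uses: take $A_k(x)=B_{2k+3}(\tfrac{1+x}{2})$ with $x_0=-1$, reduce $H_n\big((2k+3)B_{2k+2}\big)$ to $2^{n+1}H_n\big(A_k'(-1)\big)$, convert $H_n\big(A_k(x)\big)$ into $d_n(x)\cdot H_n\big(B_{2k+1}(\tfrac{1+x}{2})\big)$ via Lemma~\ref{lem:6a.1}, and split the $(x^2-1)^{n+1}$ in the denominator between $\lim_{x\to-1}d_n(x)/(x^2-1)$ from Lemma~\ref{lem:6a.2} and the $\ell=1$ factor of the product in \eqref{2.13}. That is precisely the decomposition in \eqref{6a.7}--\eqref{6a.8}, so the architecture of your argument is sound.

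However, both explicit constants you feed into \eqref{5.2} are wrong, and Theorem~\ref{thm:5.1} is not forgiving of such slips: the ratio $A_0'(x_0)^{n+1}\cdot A_0(x)^{-(n+1)}$ is not invariant if $A_0$ and $A_0'$ are rescaled inconsistently, so these values must be exact. First, $B_3(y)=y(y-1)(y-\tfrac12)$ gives $A_0(x)=\tfrac{x+1}{2}\cdot\tfrac{x-1}{2}\cdot\tfrac{x}{2}=\tfrac{x(x^2-1)}{8}$, not $\tfrac{x(x^2-1)}{16}$. Second, your own formula $A_k'(-1)=\tfrac{2k+3}{2}B_{2k+2}$ gives $A_0'(-1)=\tfrac32 B_2=\tfrac14$, not $3B_2=\tfrac12$ (you appear to have imported the constant from the proof of Corollary~\ref{cor:5.3}, where a different $A_k$ is in play). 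These two errors compound rather than cancel: the first inflates $1/A_0(x)^{n+1}$ by $2^{n+1}$ and the second inflates $A_0'(-1)^{n+1}$ by $2^{n+1}$, so the computation as you have set it up produces $4^{n+1}$ times the right-hand side of \eqref{6a.6}. A quick sanity check at $n=1$: the true determinant is $3B_2\cdot 7B_6-25B_4^2=\tfrac{1}{18}$, matching \eqref{6a.6}, whereas your constants would be off by a factor of $16$. Two smaller points: Lemma~\ref{lem:6a.2} is stated only for $n\geq 2$, so the cases $n=0,1$ need to be checked directly (as the paper does); and the final cancellation of the products coming from \eqref{6a.3} and \eqref{2.13} is asserted rather than carried out, which is where the actual verification of \eqref{6a.6} ultimately lives.
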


\begin{proof}
For $n=0$ and 1, the identity \eqref{6a.6} is easy to verify by direct 
calculation; we may therefore assume that $n\geq 2$.
We set $A_k(x):=B_{2k+3}(\frac{1+x}{2})$ and $x_0=-1$. Then we have 
$A_k(-1)=B_{2k+3}(0)=0$ for all $k\geq 0$, and also
\[
A_k'(x)=\frac{2k+3}{2}B_{2k+2}(\tfrac{1+x}{2}),\qquad
A_k'(-1)=\frac{2k+3}{2}B_{2k+2},
\]
as well as (see Table~1)
\[
A_0'(-1)=\frac{3}{2}B_2=\frac{1}{4},\qquad
A_0(x)=B_3(\tfrac{1+x}{2})=\frac{x(x^2-1)}{8}.
\]
With \eqref{2.12} and \eqref{5.2} we therefore get
\begin{align}
H_n\big((2k+3)B_{2k+2}\big) 
&= H_n\big(2A_k'(-1)\big) = 2^{n+1}H_n\big(A_k'(-1)\big)\label{6a.7} \\
&= 2^{n+1}\left(\tfrac{1}{4}\right)^{n+1}
\lim_{x\to-1}\frac{H_n\big(B_{2k+3}(\frac{1+x}{2})\big)}
{\left(\frac{x(x^2-1)}{8}\right)^{n+1}}. \nonumber
\end{align}
Now by Lemma~\ref{lem:6a.1} we have
\begin{equation}\label{6a.8}
\frac{H_n\big(B_{2k+3}(\frac{1+x}{2})\big)}{\left(\frac{x(x^2-1)}{8}\right)^{n+1}}
=4\cdot\frac{H_n\big(B_{2k+1}(\frac{1+x}{2})\big)}
{\left(\frac{x}{2}\right)^{n+1}\left(\frac{x^2-1}{4}\right)^n}
\cdot\frac{d_n(x)}{x^2-1}.
\end{equation}
Next, from \eqref{2.13} we get
\[
\lim_{x\to-1}\frac{H_n\big(B_{2k+1}(\frac{1+x}{2})\big)}
{\left(\frac{x}{2}\right)^{n+1}\left(\frac{x^2-1}{4}\right)^n}
= (-1)^{\binom{n+1}{2}}\frac{1}{3^n}\prod_{\ell=2}^{n}
\left(\frac{\ell^{4}(1-\ell^{2})}{4(2\ell+1)(2\ell-1)}\right)^{n+1-\ell}.
\]
Finally, substituting this and \eqref{6a.3} into \eqref{6a.8}, and then
\eqref{6a.8} into \eqref{6a.7}, we obtain the desired identity \eqref{6a.3}
after some easy manipulations.
\end{proof}

\section{A collection of Hankel determinant formulas}

As indicated in the Introduction, Hankel determinants of Bernoulli and Euler
numbers have been studied for many years, and numerous identities were derived
by different authors, often with differing notations and in different but
equivalent forms. In this section we attempt to collect all the identities we
are aware of and present them, as far as possible, in a unified format.

First we recall that in writing $H_n(b_k)$ it is assumed that we use the
definition \eqref{1.3a} and \eqref{1.3} and that the sequence $(b_k)$ begins
with $k=0$. Next, there is the issue of the close connection between Bernoulli
and Euler polynomials, given by identities such as \eqref{2.2}. Thus we have,
for instance,
\begin{align}
B_{2k}(\tfrac{1}{2}) &= \left(2^{1-2k}-1\right)B_{2k},\label{7.1}\\
\left(2^{2k+2}-1\right)B_{2k+2} &= (k+1)E_{2k+1}(1),\label{7.2}\\
E_k(1)&=\frac{2}{k+1}\left(2^{k+1}-1\right)B_{k+1}\qquad(k\geq 1),\label{7.3}\\
(2k+1)E_{2k} &= 2^{4k+2}B_{2k+1}(\tfrac{3}{4}),\label{7.4}
\end{align}
where \eqref{7.1} can be found in \cite[Eq.~24.4.27]{DLMF}, and 
\eqref{7.2}--\eqref{7.4} come from \eqref{2.3}. The left-hand sides of 
\eqref{7.1}--\eqref{7.4} are included in the tables below since they could be
considered somewhat simpler than the right-hand sides. In the case of 
\eqref{7.2} it is not clear which side could be considered ``simpler", and in
fact, we have also included the right-hand side (multiplied by 2).

Finally, we need to be aware of the fact that the products that occur in 
all identities for Hankel determinants can usually be written in at least two
different forms. One could argue, for instance, that the identities in the
statements of Corollaries~\ref{cor:5.2}--\ref{cor:5.4} are simpler and therefore
preferable to the ones at the end of the corresponding proofs. However, due
to the close connection between Hankel determinants and orthogonal polynomials,
especially as given by \eqref{5.11}, it makes sense to use the latter forms
as standard format. The following identities may serve to easily pass from one
form to the other:
\begin{align}
\prod_{\ell=1}^n\ell! &= \prod_{\ell=1}^n\ell^{n+1-\ell};\label{7.5}\\
\prod_{\ell=1}^n(2\ell+\nu)!  &= \nu!^n\prod_{\ell=1}^n
\big((2\ell-1+\nu)(2\ell+\nu)\big)^{n+1-\ell},\quad\nu=0,1,2;\label{7.6}\\
\prod_{\ell=1}^n(4\ell+\nu)!  &= \nu!^n\prod_{\ell=1}^n
\big((4\ell-3+\nu)\cdots(4\ell+\nu)\big)^{n+1-\ell},\quad\nu=0,1,2,3.\label{7.7}
\end{align}
These identities, which actually hold in greater generality, can be verified without much difficulty.

We are now ready to list the identities for Hankel determinants, mostly given 
in a standard format and organized in a couple of tables. The references 
provided are not necessarily the first occurrences in the literature. 

\subsection{Identities with nonzero terms for all $n$}

Most identities have nonzero Hankel determinants for all positive integers $n$;
we present them in the format
\begin{equation}\label{7.8}
H_n(b_k) = (-1)^{\varepsilon(n)}\cdot a^{n+1}\cdot\prod_{\ell=1}^{n}b(\ell)^{n+1-\ell}.
\end{equation}
Here, the column for $\varepsilon(n)$ could be eliminated by incorporating a $-$ sign in $a$ or $b(\ell)$, as appropriate. However, we decided to make the sign pattern more explicit. 

\medskip
\begin{center}
\renewcommand*{\arraystretch}{2.5}
\begin{longtable}[h]{|c||c|c|c|c|}
\hline
$b_k$ & $\varepsilon(n)$ & $a$ & $b(\ell)$ & Reference \\
\hline\hline
$B_k$ & $\binom{n+1}{2}$ & 1 & $\displaystyle{\frac{\ell^4}{4(2\ell+1)(2\ell-1)}}$ & \cite[(3.56)]{Kr} \\
\hline
$B_{k+1}$ & $\binom{n+2}{2}$ & $\frac{1}{2}$ & $\displaystyle{\frac{\ell^2(\ell+1)^2}{4(2\ell+1)^2}}$ & \cite[(3.57)]{Kr} \\
\hline
$B_{k+2}$ & $\binom{n+1}{2}$ & $\tfrac{1}{6}$ & $\displaystyle{\frac{\ell(\ell+1)^2(\ell+2)}{4(2\ell+1)(2\ell+3)}}$ & \cite[(2.38)]{Kr} \\
\hline
$B_{2k+2}$ & $0$ & $\tfrac{1}{6}$ & $\displaystyle{\frac{\ell^3(\ell+1)(2\ell-1)(2\ell+1)^3}{(4\ell-1)(4\ell+1)^2(4\ell+3)}}$ & \cite[(3.59)]{Kr} \\
\hline
$B_{2k+4}$ & $n+1$ & $\tfrac{1}{30}$ & $\displaystyle{\frac{\ell(\ell+1)^3(2\ell+1)^3(2\ell+3)}{(4\ell+1)(4\ell+3)^2(4\ell+5)}}$ & \cite[(3.60)]{Kr} \\
\hline
$B_{2k}(\tfrac{1}{2})$ & $0$ & $1$ & $\displaystyle{\frac{\ell^4(2\ell-1)^4}{(4\ell-3)(4\ell-1)^2(4\ell+1)}}$ & \cite[(41)]{Ch} \\
\hline
$(2^{2k+2}-1)B_{2k+2}$ & $0$ & $\frac{1}{2}$ & $\displaystyle{\ell^3(\ell+1)}$ & Cor.~\ref{cor:5.3} \\
\hline
$(2k+1)B_{2k}(\tfrac{1}{2})$ & $0$ & $1$ & $\displaystyle{\frac{\ell^6}{4(2\ell+1)(2\ell-1)}}$ & Cor.~\ref{cor:5.4} \\
\hline
$(2k+3)B_{2k+2}$ & $0$ & $\frac{1}{2}$ & $\displaystyle{\frac{\ell^3(\ell+1)^3}{4(2\ell+1)^2}}$ & Cor.~\ref{cor:6a.3} \\
\hline
$B_{2k+1}(\tfrac{x+1}{2})$ & $\binom{n+1}{2}$ & $\frac{x}{2}$ & $\displaystyle{\frac{\ell^4(x^2-\ell^2)}{4(2\ell+1)(2\ell-1)}}$ & \cite[Thm.~1.1]{DJ} \\
\hline
$E_k$ & $\binom{n+1}{2}$ & 1 & $\displaystyle{\ell^2}$ & \cite[(4.2)]{AC} \\
\hline
$E_k(x)$ & $\binom{n+1}{2}$ & 1 & $\displaystyle{\frac{\ell^2}{4}}$ & \cite[(5.2)]{AC} \\
\hline
$E_{k+1}(1)$ & $\binom{n+1}{2}$ & $\frac{1}{2}$ & $\displaystyle{\frac{\ell(\ell+1)}{4}}$ & \cite[(H4)]{Ha} \\
\hline
$E_{2k}$ & $0$ & $1$ & $\displaystyle{(2\ell-1)^2(2\ell)^2}$ & \cite[(3.52)]{Kr} \\
\hline
$E_{2k+1}(1)$ & $0$ & $\frac{1}{2}$ & $\displaystyle{\frac{\ell^2(2\ell-1)(2\ell+1)}{4}}$ & \cite[(4.56)]{Mi} \\
\hline
$E_{2k+2}$ & $n+1$ & $1$ & $\displaystyle{(2\ell)^2(2\ell+1)^2}$ & \cite[(3.53)]{Kr} \\
\hline
$E_{2k+3}(1)$ & $n+1$ & $\frac{1}{4}$ & $\displaystyle{\frac{\ell(\ell+1)(2\ell+1)^2}{4}}$ & \cite[(4.57)]{Mi}\\
\hline
$(2k+1)E_{2k}$ & $0$ & 1 & $\displaystyle{(2\ell)^4}$ & Cor.~\ref{cor:5.2} \\
\hline
$(2k+2)E_{2k+1}(1)$ & $0$ & 1 & $\displaystyle{\ell^3(\ell+1)}$ & Cor.~\ref{cor:5.3} \\
\hline
$\displaystyle{\frac{E_{k+1}(1)}{(k+1)!}}$ & $\binom{n+1}{2}$ & $\frac{1}{2}$ & $\displaystyle{\frac{1}{4(2\ell-1)(2\ell+1)}}$ & \cite[(H12)]{Ha} \\
\hline
$\displaystyle{\frac{E_{2k+1}(1)}{(2k+1)!}}$ & $0$ & $\frac{1}{2}$ & $\displaystyle{\frac{1}{16(4\ell-3)(4\ell-1)^2(4\ell+1)}}$ & \cite[(H13)]{Ha} \\
\hline
$\displaystyle{\frac{E_{2k+3}(1)}{(2k+3)!}}$ & $n+1$ & $\frac{1}{24}$ & $\displaystyle{\frac{1}{16(4\ell-1)(4\ell+1)^2(4\ell+3)}}$ & \cite[(H22)]{Ha} \\
\hline
$E_{2k}(\tfrac{x+1}{2})$ & $\binom{n+1}{2}$ & 1 & $\displaystyle{\frac{\ell^2}{4}\big(x^2-(2\ell-1)^2\big)}$ & \cite[Cor.~5.2]{DJ} \\
\hline
$E_{2k+1}(\tfrac{x+1}{2})$ & $\binom{n+1}{2}$ & $\frac{x}{2}$ & $\displaystyle{\frac{\ell^2}{4}\big(x^2-(2\ell)^2\big)}$ & \cite[Cor.~5.2]{DJ} \\
\hline
$E_{2k+2}(\tfrac{x+1}{2})$ & $\binom{n+1}{2}$ & $\frac{x^2-1}{4}$ & $\displaystyle{\frac{\ell^2}{4}\big(x^2-(2\ell+1)^2\big)}$ & \cite[Cor.~5.2]{DJ} \\
\hline
\end{longtable}
\end{center}

\subsection{Identities with zero terms for all even $n$}

A second class of identities have zero Hankel determinants for all positive 
even integers, that is,
\begin{equation}\label{7.9}
H_{2m}(b_k)=0;
\end{equation}
we then present the nonzero terms in the format
\begin{equation}\label{7.10}
H_{2m+1}(b_k)
=(-1)^{m+1}\cdot a^{2(m+1)}\cdot\prod_{\ell=1}^{m}b(\ell)^{2(m+1-\ell)}.
\end{equation}

\medskip
\begin{center}
\renewcommand*{\arraystretch}{2.5}
\begin{longtable}[h]{|c||c|c|c|}
\hline
$b_k$ & $a$ & $b(\ell)$ & Reference \\
\hline\hline
$E_{k+1}$ & $1$ & $\displaystyle{(2\ell)^2(2\ell+1)^2}$ & \cite[(H8)]{Ha} \\
\hline
$E_{k+2}(1)$ & $\frac{1}{4}$ & $\displaystyle{\frac{\ell(\ell+1)(2\ell+1)^2}{4}}$ & \cite[(H11)]{Ha}\footnote{it appears that the author intended the determinant to be $H_{2n+1}$.} \\
\hline
$\left(0,E_1(1),E_2(1),\ldots\right)$ & $\frac{1}{2}$ & $\displaystyle{\frac{\ell^2(2\ell-1)(2\ell+1)}{4}}$ & \cite[(H9)]{Ha} \\
\hline
$kE_{k-1}(x)$ & $1$ & $\displaystyle{\ell^4}$ & \eqref{6.7} \\
\hline
$kE_{k-1}$ & $1$ & $\displaystyle{(2\ell)^4}$ & \eqref{6.8} \\
\hline
$\displaystyle{\left(0,\frac{E_1(1)}{1!},\frac{E_2(1)}{2!},\ldots\right)}$ & $\frac{1}{2}$ & $\displaystyle{\frac{1}{16(4\ell-3)(4\ell-1)^2(4\ell+1)}}$ & \cite[(H15)]{Ha} \\
\hline
$\displaystyle{\frac{E_{k+2}(1)}{(k+2)!}}$ & $\frac{1}{24}$ & $\displaystyle{\frac{1}{16(4\ell-1)(4\ell+1)^2(4\ell+3)}}$ & \cite[(H14)]{Ha} \\
\hline
$B_k(\tfrac{x+r}{q})-B_k(\tfrac{x+s}{q})$ & $\frac{s-r}{q^{m+1}}$ & $\displaystyle{\frac{\ell^4\big((s-r)^2-(q\ell)^2\big)}{4(2\ell-1)(2\ell+1)}}$ & \eqref{6.4} \\
\hline
$E_k(\tfrac{x+r}{q})-E_k(\tfrac{x+s}{q})$ & $\frac{s-r}{q}$ & $\displaystyle{\frac{\ell^2\big((s-r)^2-(2q\ell)^2\big)}{4q^2}}$ & \eqref{4.4} \\
\hline
\end{longtable}
\end{center}

\subsection{Miscellaneous identities}

We now collect a number of identities that do not fit into Subsections~7.1 or 
7.2. We begin with a few that are, however, closely related to some identities
in the two tables above. The first of these identities was adapted from 
Andrews and Wimp \cite[p.~441]{AW}:
\begin{equation}\label{7.11a}
H_n\left(\frac{B_{k}}{k!}\right) = (-1)^{\binom{n+1}{2}}(n+1)
\prod_{\ell=1}^n\left(\frac{1}{4(2\ell-1)(2\ell+1)}\right)^{n+1-\ell}.
\end{equation}
The next three identities are due to Krattenthaler and were published in 
\cite[p.~346]{FY}.
\begin{align}
H_n\left(\frac{B_{2k+2}}{(2k+2)!}\right) &= \left(\frac{1}{4}\right)^{(n+1)^2}
\prod_{\ell=1}^{2n+1}\left(\frac{1}{2\ell+1}\right)^{2n+2-\ell},\label{7.11b}\\
H_n\left(\frac{B_{2k+4}}{(2k+4)!}\right) &= \left(\frac{-1}{36}\right)^{n+1}
\left(\frac{1}{4}\right)^{(n+1)^2}
\prod_{\ell=1}^{2n+1}\left(\frac{1}{2\ell+3}\right)^{2n+2-\ell},\label{7.11c}\\
H_n\left(\frac{B_{2k+6}}{(2k+6)!}\right) &= \frac{(n+2)(2n+5)}{3\cdot 60^{2n+2}}
\left(\frac{1}{4}\right)^{(n+1)^2}
\prod_{\ell=1}^{2n+1}\left(\frac{1}{2\ell+5}\right)^{2n+2-\ell}.\label{7.11d}
\end{align}
The identity \eqref{7.11d} was slightly changed from its original form. The
following three identities were adapted from (H21), (H23), and (H24), 
respectively, in~\cite{Ha}.
\begin{align}
H_n\left(\frac{E_{k+3}(1)}{(k+3)!}\right)
&= (-1)^{\binom{n+2}{2}}\left(\frac{1}{24}\right)^{n+1}\prod_{\ell=1}^n
\left(\frac{1}{4(2\ell+1)(2\ell+3)}\right)^{n+1-\ell}\label{7.11}\\
&\qquad\times\begin{cases}
\binom{n+3}{2},& (n\;\hbox{odd}),\\
\binom{n+2}{2},& (n\;\hbox{even});
\end{cases}\nonumber \\
H_n\left(\frac{E_{2k+5}(1)}{(2k+5)!}\right)
&= \left(\frac{1}{2\cdot 6!}\right)^{n+1}\binom{2n+4}{2}\label{7.12}\\
&\qquad\times\prod_{\ell=1}^n
\left(\frac{1}{16(4\ell+1)(4\ell+3)^2(4\ell+5)}\right)^{n+1-\ell};\nonumber\\
H_n\left(\frac{E_{2k+7}(1)}{(2k+7)!}\right)
&= \left(\frac{-1}{5\cdot 8!}\right)^{n+1}\frac{4n^2+18n+17}{3}\binom{2n+6}{4}\label{7.13}\\
&\qquad\times\prod_{\ell=1}^n
\left(\frac{1}{16(4\ell+3)(4\ell+5)^2(4\ell+7)}\right)^{n+1-\ell}.\nonumber
\end{align}
Another identity of a similar nature is
\begin{align}
H_n\big(B_{k+2}(-1)\big) &= (-1)^{\binom{n+1}{2}}\left(\frac{1}{6}\right)^{n+1}
\big((n+1)(n+2)^2(n+3)+1\big)\label{7.14} \\
&\qquad\times\prod_{\ell=1}^n\left(\frac{\ell(\ell+1)^2(\ell+2)}{4(2\ell+1)(2\ell+3)}\right)^{n+1-\ell},\nonumber
\end{align}
which can be found in \cite[Eq.~(7.2)]{FK}, in a slightly different form. In
this connection Fulmek and Krattenthaler also showed that 
\[
H_n\left(B_k-2B_{k+1}+B_{k+2}\right) = H_n\left(B_{k+2}(-1)\right)\qquad
(n\geq 0).
\]
Furthermore, they derived identities for
\[
H_{2m}\left(B_{k+2}(-\tfrac{1}{2})\right)\quad\hbox{and}\quad
H_{2m+1}\left(B_{k+2}(-\tfrac{1}{2})\right),
\]
involving $_4F_3$ hypergeometric functions \cite[Eq.~(7.3), (7.4)]{FK} and
identities for
\[
H_{2m}\left(B_{k+2}(\tfrac{1}{2})\right)\quad\hbox{and}\quad
H_{2m+1}\left(B_{k+2}(\tfrac{1}{2})\right),
\]
which involve certain finite sums \cite[Eq.~(7.5), (7.6)]{FK}. At this point
we also mention the identities for
\[
H_{2m}\left(E_k(\tfrac{x+r}{q})+E_k(\tfrac{x+s}{q})\right)\quad\hbox{and}\quad
H_{2m+1}\left(E_k(\tfrac{x+r}{q})+E_k(\tfrac{x+s}{q})\right),
\]
which were obtained in \eqref{4.5} and \eqref{4.6} above.

We conclude this list of identities with a very general formula, which is also
due to Fulmek and Krattenthaler \cite[Eq.~(5.3)]{FK}. Here a well-known symbolic
notation is used, where after expansion each power ${\mathcal B}^j$ is replaced
by the Bernoulli number $B_j$. Also, the shifted factorial $(a)_j$ is defined by
$(a)_j:=a(a+1)\cdots(a+j-1)$ for $j\geq 1$, and $(a)_0=1$. Thus, for example, we
have
\[
{\mathcal B}^3({\mathcal B}+1)_2={\mathcal B}^3({\mathcal B}+1)({\mathcal B}+2) 
= {\mathcal B}^5+3{\mathcal B}^4+2{\mathcal B}^3 \equiv B_5+3B_4+2B_3.
\]
We can now state the identity in question, which is, in fact, again of the form
\eqref{7.8}:

For integers $a,b\geq 1$ and $c,d\geq 0$, we have
\begin{align}
H_n\big(&{\mathcal B}^{k+2}({\mathcal B}+1)_{a-1}({\mathcal B}+1)_{b-1}(-{\mathcal B}+1)_{c-1}(-{\mathcal B}+1)_{d-1}\big)\label{7.15}\\
&=(-1)^{\binom{n+1}{2}}\left(\frac{(a+c-1)!(b+c-1)!(a+d-1)!(b+d-1)!}{(a+b+c+d-1)!}\right)^{n+1}\nonumber\\
&\times\prod_{\ell=1}^n\left(\frac{\ell(a+c+\ell-1)(b+c+\ell-1)(a+d+\ell-1)}
{(a+b+c+d+2\ell-3)(a+b+c+d+2\ell-2)^2}\right.\nonumber\\
&\qquad\quad\times\left.\frac{(b+d+\ell-1)(a+b+c+d+\ell-2)}{(a+b+c+d+2\ell-1)}\right)^{n+1-\ell},\nonumber
\end{align} 
where in the case $c=0$ or $d=0$ we interpret $(-{\mathcal B}+1)_{-1}$ as 
$1/(-{\mathcal B})$.

As mentioned in \cite[p.~626]{FK}, the cases $a=b=1$, $c=d=0$ and $a=b=c=d=1$
give the first and third entries, respectively, in the table in Subsection~7.1. 
Similarly, $a=b=c=1$, $d=0$, would give the second entry in this table.

\subsection{Other related sequences}

We made the conscious decision to restrict ourselves to Bernoulli and Euler 
numbers and polynomials in Subsections~7.1--7.3. We conclude this section with
a few remarks on related sequences.

{\bf 1.} While number theorists and researchers in special functions tend to 
favor the definition \eqref{1.2} for Euler numbers, combinatorists typically 
prefer the alternative sequence ${\bf E}_n$ defined by
\begin{equation}\label{7.16}
\tan{t}+\sec{t} = \sum_{n=0}^{\infty}{\bf E}_n\frac{t^n}{n!},
\end{equation}
where we use a different font to avoid possible confusion. The first few terms,
starting with ${\bf E}_0$, are 1, 1, 1, 2, 5, 16, 61, 272, 1385; they are all
positive integers. By comparing the generating function \eqref{7.16} with 
\eqref{1.2}, it is not difficult to see that, for all $k\geq 0$,
\begin{align}
{\bf E}_{2k} &= (-1)^kE_{2k},\label{7.17}\\
{\bf E}_{2k+1} &= (-1)^k 2^{2k+1}E_{2k+1}(1).\label{7.18}
\end{align}
Using these identities and \eqref{2.12}, any Hankel determinant identity for
the sequences on the right immediately give identities for the ones on the left,
and vice versa; in fact, this is how we imported the numerous identities from
Han's recent paper \cite{Ha}. There are eight more identities in \cite{Ha} for
the ``mixed" sequence $({\bf E}_k)$, namely for
\[
H_n\big({\bf E}_{k+\mu}\big),\;\mu=0,1,2,\quad\hbox{and}\quad
H_n\big({\bf E}_{k+\nu}/(k+\nu)!\big),\;\nu=0,1,2,3,4.
\]

{\bf 2.} By \eqref{7.16} it is clear that the numbers ${\bf E}_{2k-1}$, 
$k\geq 1$, are the same as the {\it tangent numbers} (or {\it tangent
coefficients}) $T_k$, which are also known to have the form
\[
T_k = (-1)^{k-1}2^k\frac{2^{2k}-1}{2k}B_{2k}
\]
(see, e.g., \cite[Eq.~4.19.3]{DLMF}). By \eqref{7.18} we have
$T_k=(-1)^{k-1}2^{2k-1}E_{2k-1}(1)$; therefore all identities for $E_k(1)$ can
also be seen as identities for tangent numbers.

{\bf 3.} Euler numbers of an integer order $p\geq 1$ are defined by a
generating function that is the $p$th power of the left-hand identity in
\eqref{1.2}. Already Al-Salam and Carlitz \cite{AC} found the Hankel 
determinants of the sequence of these higher-order Euler numbers. More 
recently, Han \cite{Ha} dealt with other related sequences of higher-order
Euler numbers, and the second author and Shi \cite{JS} determined the 
orthogonal polynomials of higher-order Euler {\it polynomials\/}, which also
led to relevant Hankel determinants. Higher-order {\it Bernoulli} numbers,
however, are more challenging; see the remarks in \cite[p.~401]{JS}.

{\bf 4.} Among other generalizations of Bernoulli and Euler numbers for which
Hankel determinants have been computed are the $q$-Bernoulli-Carlitz numbers
\cite{CZ}, the median Bernoulli numbers \cite{Ch}, and some character
analogues (Corollaries~\ref{cor:6.5} and~\ref{cor:4.4} above). We refer the
interested reader to three very extensive studies by Krattenthaler
\cite{Kr, Kr2} and Milne \cite{Mi} for numerous other Hankel determinant
evaluations. Extensive references to the vast literature are provided in
\cite[pp.~47--48]{Kr}, \cite[p.~122]{Kr2}, and \cite[pp.~54--57]{Mi}.

\end{document}